\begin{document}
	\setlength{\baselineskip}{16pt}

	\newtheorem{theorem}{Theorem}[section]
	\newtheorem{lemma}{Lemma}[section]
	\newtheorem{proposition}{Proposition}[section]
	\newtheorem{definition}{Definition}[section]
	\newtheorem{example}{Example}[section]
	\newtheorem{corollary}{Corollary}[section]
	\newtheorem{assumption}{Assumption}[section]
	\newtheorem{remark}{Remark}[section]
	\numberwithin{equation}{section}
	\renewcommand{\labelenumi}{(\arabic{enumi})}

	\def\disp{\displaystyle}
	\def\undertex#1{$\underline{\hbox{#1}}$}
	\def\card{\mathop{\hbox{card}}}
	\def\sgn{\mathop{\hbox{sgn}}}
	\def\exp{\mathop{\hbox{exp}}}
	\def\OFP{(\Omega,{\cal F},\PP)}
	\newcommand\JM{Mierczy\'nski}
	\newcommand\RR{\ensuremath{\mathbb{R}}}
	\newcommand\EE{\ensuremath{\mathbb{E}}}
	\newcommand\CC{\ensuremath{\mathbb{C}}}
	\newcommand\QQ{\ensuremath{\mathbb{Q}}}
	\newcommand\ZZ{\ensuremath{\mathbb{Z}}}
	\newcommand\NN{\ensuremath{\mathbb{N}}}
	\newcommand\PP{\ensuremath{\mathbb{P}}}
	\newcommand\abs[1]{\ensuremath{\lvert#1\rvert}}
	\newcommand\normf[1]{\ensuremath{\lVert#1\rVert_{f}}}
	\newcommand\normfRb[1]{\ensuremath{\lVert#1\rVert_{f,R_b}}}
	\newcommand\normfRbone[1]{\ensuremath{\lVert#1\rVert_{f, R_{b_1}}}}
	\newcommand\normfRbtwo[1]{\ensuremath{\lVert#1\rVert_{f,R_{b_2}}}}
	\newcommand\normtwo[1]{\ensuremath{\lVert#1\rVert_{2}}}
	\newcommand\norminfty[1]{\ensuremath{\lVert#1\rVert_{\infty}}}
	
	\newcommand\del[1]{}

\begin{frontmatter}

\title{Finite-dimensional approximations of random attractor for stochastic discrete complex Ginzburg-Landau equations }
\tnotetext[Supported]{The research is supported by National Natural Science Foundation of China (12371198).}

\author{Xinjie Fang}
\author{Jianhua Huang}
\author{Fang Su\corref{cor1}}
\author{Jun Ouyang}
\cortext[cor1]{Corresponding author, sufang@nudt.edu.cn}
\address{College of Sciences, National University of Defense Technology, Changsha Hunan, 410073, P.R.China}

\begin{abstract}
In this paper, we apply an implicit Euler scheme to discretize the complex Ginzburg-Landau equation and prove the existence of a numerical attractor for the discrete Ginzburg-Landau system. We establish the upper semicontinuity of the numerical attractor with respect to the global attractor as the time step tends to zero. Furthermore, we provide finite-dimensional approximations for three types of attractors (global, numerical, and random), and demonstrate the existence of truncated attractors along with their convergence as the dimension of the state space tends to infinity. Finally, we prove the existence of a random attractor and establish the upper semi-continuity both of the global random attractor and the truncated random attractor.
\end{abstract}

\begin{keyword}
Implicit Euler scheme \sep Numerical attractor \sep Random attractor \sep Hausdorff distance.\\
\end{keyword}

\end{frontmatter}

\noindent\textbf{Mathematics Subject Classification} 34D45 $\cdot$ 37K60 $\cdot$  65L20 

\section{Introduction}\label{intro}

The Ginzburg-Landau (G-L) equation is a fundamental model for describing superconducting phenomena. By introducing a complex order parameter, it characterizes the macroscopic quantum behavior and phase transitions of superconductors below the transition temperature. Proposed by Ginzburg and Landau in 1950 based on Landau's second-order phase transition theory, this theory not only successfully explained the critical phenomena of superconductors and the distribution of internal magnetic fields but was also further developed by Abrikosov to predict type-II superconductors and their vortex lattice structures \cite{Guo-2018}. Subsequently, in 1959, Gorkov demonstrated from a microscopic perspective that the Ginzburg-Landau equation is a macroscopic manifestation of the microscopic theory of superconductivity under specific conditions. For their foundational contributions, Ginzburg and Abrikosov were awarded the Nobel Prize in Physics in 2003.

The Ginzburg-Landau equation reveals numerous fascinating physical phenomena, including magnetic flux quantization, vortex dynamics, superconducting domain wall structures, and various topological defects \cite{Kittel-2022}. These phenomena have established the equation as a centerpiece not only in the study of traditional superconductors but also in a broad range of physical and engineering fields, such as high-temperature superconductivity, Bose-Einstein condensation, and nonlinear optics.

Extensive studies have been conducted on the theoretical analysis and numerical simulation of the Ginzburg-Landau equation. For example, the monograph by Guo Boling et al. systematically summarizes the global solutions, asymptotic behavior, and connections with harmonic mappings of this equation \cite{Guo-2018}; Kong Youchao et al. obtained multiple exact traveling wave solutions in the form of envelope waves using the homogeneous balance method and F-expansion method \cite{Kong-2016}. However, with the emergence of new quantum materials and complex physical systems (e.g., anisotropic superconductors), re-evaluating the classical Ginzburg-Landau equation within the frameworks of fluctuations, random perturbations, and fractional calculus has become crucial for more accurately describing physical phenomena that are susceptible to external environments or internal disorder.

In the context of the stochastic Ginzburg-Landau equation, several recent works have begun to explore the existence of solutions for fractional models with random perturbations or investigate the approximation of invariant measures \cite{Rey-2014}. While these advances provide key insights into understanding the statistical properties of the system, a systematic analysis of the long-term dynamical behavior of the system, particularly from the perspective of attractor theory, remains a significant challenge.

Attractors offer a powerful mathematical framework for characterizing the long-term dynamical behavior of solutions to partial differential equations. In recent years, substantial efforts have been dedicated to studying attractors in deterministic and stochastic systems, such as reaction-diffusion systems, the Gross-Pitaevskii equation, quantum vortices, and the Navier-Stokes equations. Recently, Mantzavinos et al. investigated the well-posedness of the complex Ginzburg-Landau equation on finite intervals and the suppression of chaotic behavior via finite-dimensional boundary feedback control \cite{Alkin-2024}.

To the best of our knowledge, although deterministic and stochastic Ginzburg-Landau equations play fundamental roles in describing the long-term dynamical behavior of superconducting and superfluid systems, research on the existence, structural characteristics, finite-dimensional approximations, and random attractors of their global attractors remains largely open, especially in complex scenarios involving fractional derivatives, strong anisotropy, or practical boundary control. This paper aims to systematically review the current research status of the Ginzburg-Landau equation, with a particular focus on its dynamical behavior and the development of attractor theory, thereby providing references and directions for future research.

In this paper, we focus on the following non-autonomous stochastic complex Ginzburg-Landau equation:
\begin{equation}\label{1.1}
\frac{d u(t)}{d t}=-(\lambda+i\mu)\frac{\partial^2u(t)}{\partial x^2}-(\gamma+i\beta)u-(k+i\nu)|u|^{p}u+g(t)+a u \circ \frac{dW}{d t},
\end{equation}

Later, we will attempt to approximate the global attractor $\mathcal{A}$ using numerical attractors and random attractors, respectively. This approach aims to approximate the global attractor from both microscopic and macroscopic perspectives.

Our first objective is to study the existence and finite-dimensional approximation of numerical attractors for the complex Ginzburg-Landau equation, where time discretization is performed using an implicit Euler scheme (IES). For the deterministic complex Ginzburg-Landau equation, we first prove the existence and uniqueness of solutions for the implicit Euler scheme and demonstrate that the discrete-time system possesses a numerical attractor $\mathcal{A}_{\epsilon}$. As shown in Theorem \ref{dingli3.3}, when the time step $\epsilon \to 0$, this attractor upper semi-converges to the global attractor $\mathcal{A}$ of the continuous-time lattice model, moreover, we prove its upper semi-continuity. Next, we consider the finite dimensional approximation of the numerical attractor $\mathcal{A}_{m}^{\varepsilon}$ and prove its convergence to $\mathcal{A}^{\varepsilon}$ as $m\to\infty$, which is shown in Theorem \ref{dingli4.2}.

The second goal is to study the stochastic complex Ginzburg-Landau equation, prove the existence of a random attractor using the Ornstein Uhlenbeck process and tail estimates for the random solution. We then investigate the convergence property between the random attractor $\mathcal{A}^{a}(\omega)$ and the global attractor $\mathcal{A}$ as $a\to0$, which is diplayed in Theorem \ref{dingli5.1}. Besides, we prove that the truncated random attractor $\mathcal{A}_{m}^{a}(\omega)$ converges to the random attractor $\mathcal{A}^{a}(\omega)$ as $m\to\infty$, which is shown in Theorem \ref{dingli5.2}. Furthermore, we give the upper semi-continuity in Theorem \ref{suijishangbanlianxu} Based on the convergence of solutions between the deterministic and random equations in the truncation sense, we demonstrate the convergence property from $\mathcal{A}_{m}^{a}(\omega)$ to the truncated global attractor $\mathcal{A}^{m}$ as the noise intensity approaches zero, which will be presented in Theorem \ref{dingli5.3}. Finally we give the upper semi-continuity of the truncated random attractor in Theorem \ref{jieduanshangbanlianxu}.

\section{Well-posedness and discretization error}\label{sec:2}

We first apply the implicit Euler scheme to obtain the lattice system for the deterministic complex Ginzburg-Landau equation:
\begin{equation}\label{gl1}
\left\{\begin{array}{cl}
&\frac{d u_{j}(t)}{d t}=(\lambda+i\mu)(-u_{j-1}+2u_{j}-u_{j+1})-(\gamma+i\beta)u_{j}-(k+i\nu)|u_{j}|^{p}u_{j}+g_{j}(t),\\
&u_{j}(0)=u_{0,j},j \in \mathbb{Z}.
\end{array}\right.
\end{equation}
According to \cite{Gu-2016}, let $\ell^2$ be the Hilbert space of complex-valued bi - infinite sequences that are square summable. The inner product on $\ell^2$ is defined as
\begin{equation*}
\left(u,v\right)=\sum_{j\in \mathbb{Z}}u_{j}\overline{v_{j}},\quad \forall u = (u_{j})_{j \in \ZZ}, v = (v_{j})_{j\in\ZZ} \in \ell^{2},
\end{equation*}
and the norm is ${\|u\|}^{2}_{2} = \left(u,u\right)$. Additionally, we have 
\begin{equation*}
\begin{aligned}
	\ell^2:=\left\{u=\left(u_i\right)_{i \in \mathbb{Z}}:\|u\|^2=\sum_{i \in \mathbb{Z}}\left|u_i\right|^2<\infty\right\},\\
	\ell^p:=\left\{u=\left(u_i\right)_{i \in \mathbb{Z}}:\|u\|^{p}_{p}=\sum_{i \in \mathbb{Z}}\left|u_i\right|^p<\infty\right\}.
\end{aligned}
\end{equation*}
For $\forall j\in\ZZ$ and $u = (u_{j})_{j\in\ZZ}$, we defined some operators as follows:
\begin{equation}\label{chafen}
\begin{aligned}
	&\Lambda : \ell^{2} \rightarrow \ell^{2}, \ (\Lambda u)_{j} = -u_{j-1} + 2u_{j} - u_{j+1},\\
	&D^{+} : \ell^{2} \rightarrow \ell^{2}, \ (D^{+}u)_{j} = u_{j+1} - u_{j},\\
	&D^{-} : \ell^{2} \rightarrow \ell^{2}, \ (D^{-}u)_{j} = u_{j-1} - u_{j},
\end{aligned}
\end{equation}
then we have $(\Lambda u,u) = \| D^{+}u\|^{2}$. According to \cite{Yang-2022}, all operators are bounded on $\ell^{2}$ with $\| \Lambda \| \leq 4 $ and $\|D^{+}\|=\| D^{-} \| \leq 2 $.

Therefore the equation \eqref{gl1} can be rewritten as 
\begin{equation}\label{gl}
\left\{\begin{array}{cl}
&\frac{d u(t)}{d t}=(\lambda+i\mu)\Lambda u-(\gamma+i\beta)u-(k+i\nu)|u|^{p}u+g := Fu,\\
&u(0)=u_0.
\end{array}\right.
\end{equation}
where $\lambda, \gamma,k,\alpha,p > 0$, $\left|u\right|^{p}u=\left(\left|u_{j}\right|^{p}u_{j}\right)_{j \in \mathbb{Z}}$, and $g=(g_{j})_{j \in \mathbb{Z}}$, the operator $F$ is called the vector field of the complex Ginzburg-Landau equation. In this article, we use the IES to obtain the discrete - time system of \eqref{gl} as follows:
\begin{equation}\label{glt}
	u_{n}^{\varepsilon} = u_{n-1}^{\varepsilon} + \varepsilon Fu_{n}^{\varepsilon},
\end{equation}
where the $\varepsilon$ is the time step, $u_{n}^{\varepsilon} = (u_{n,j}^{\varepsilon})_{j\in\ZZ}$.

First, we introduce two useful Lipschitz constants for the operator $F$.
\begin{lemma}\label{yinli2.1}
	Let the operator $F$ be defined as in \eqref{gl}. Then for any $u, v\in B_{r}$, the following equalities hold:
	\begin{align}
	&\|Fu\| \leq (4\lambda+4|\mu|+\gamma+|\beta|)r + (k+|\nu|)r^{p+1} +\|g\| =: M_{r},\label{jie}\\
	&\|Fu - Fv\| \leq L_{r}\|u-v\|, \text{with\ } L_{r}:= 4\lambda+4|\mu|+\gamma+|\beta| + C_{p}r^{p}(k+|\nu|).\label{lianxu}
	\end{align}
	where
	\begin{equation*}
		B_{r}:=\left\{u\in\ell^{2};\|u\|\leq r\right\},\ \forall r \textgreater 0.
	\end{equation*}
\end{lemma}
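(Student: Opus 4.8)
The plan is to establish both estimates by the triangle inequality, treating the linear part through the operator-norm bounds already recorded and the nonlinear part through a pointwise estimate on $\CC$. For \eqref{jie}, I would write
$$\|Fu\| \le |\lambda+i\mu|\,\|\Lambda u\| + |\gamma+i\beta|\,\|u\| + |k+i\nu|\,\big\||u|^{p}u\big\| + \|g\|,$$
and use $|\lambda+i\mu|=\sqrt{\lambda^{2}+\mu^{2}}\le\lambda+|\mu|$, $|\gamma+i\beta|\le\gamma+|\beta|$, $|k+i\nu|\le k+|\nu|$, together with $\|\Lambda\|\le 4$ and $\|u\|\le r$. For the nonlinear term, since $(|u|^{p}u)_{j}=|u_{j}|^{p}u_{j}$, I would note that $|u_{j}|\le\|u\|\le r$ for every $j$ and hence $\big\||u|^{p}u\big\|^{2}=\sum_{j}|u_{j}|^{2(p+1)}\le r^{2p}\sum_{j}|u_{j}|^{2}\le r^{2(p+1)}$, giving $\big\||u|^{p}u\big\|\le r^{p+1}$. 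Collecting the terms yields exactly $M_{r}$.

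For \eqref{lianxu}, the constant $g$ cancels in $Fu-Fv$, so
$$\|Fu-Fv\|\le(\lambda+|\mu|)\|\Lambda(u-v)\| + (\gamma+|\beta|)\|u-v\| + (k+|\nu|)\big\||u|^{p}u-|v|^{p}v\big\|,$$
and the first two terms are bounded by $(4\lambda+4|\mu|+\gamma+|\beta|)\|u-v\|$ as before. Everything then reduces to the nonlinear Lipschitz estimate $\big\||u|^{p}u-|v|^{p}v\big\|\le C_{p}\,r^{p}\|u-v\|$, which is the heart of the proof.

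To prove that estimate I would argue componentwise with the scalar map $\phi(z)=|z|^{p}z$ on $\CC$. For $p>0$ this map is $C^{1}$ (including at the origin, where its differential vanishes), and a direct computation of the Jacobian in real coordinates shows $\|D\phi(z)\|\le C_{p}|z|^{p}$ for a constant $C_{p}$ depending only on $p$. Applying the mean value inequality along the segment $t\mapsto v_{j}+t(u_{j}-v_{j})$, whose points all have modulus at most $\max(|u_{j}|,|v_{j}|)\le r$ by convexity, gives $|\phi(u_{j})-\phi(v_{j})|\le C_{p}\,r^{p}|u_{j}-v_{j}|$ for each $j$. Squaring and summing over $j\in\ZZ$ then yields $\big\||u|^{p}u-|v|^{p}v\big\|\le C_{p}\,r^{p}\|u-v\|$, and substituting into the display above produces $L_{r}$. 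The main obstacle is precisely this nonlinear bound: one must verify the $|z|^{p}$ scaling of $D\phi$ uniformly in $z$ (the delicate point being the behaviour near $z=0$ when $0<p<1$) and then transfer the pointwise estimate to the $\ell^{2}$ norm.
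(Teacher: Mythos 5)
Your proof is correct, and it shares the paper's overall skeleton: both arguments split $F$ by the triangle inequality, bound the linear part via $\|\Lambda\|\leq 4$ and $|\lambda+i\mu|\leq\lambda+|\mu|$ (and similarly for the other coefficients), and reduce everything to the nonlinear Lipschitz estimate $\||u|^{p}u-|v|^{p}v\|\leq C_{p}r^{p}\|u-v\|$. Where you genuinely differ is in how that key estimate is established. The paper invokes the two-point scalar inequality $\,||z_1|^{p}z_1-|z_2|^{p}z_2|\leq C_0(p)(|z_1|^{p}+|z_2|^{p})|z_1-z_2|$, stated without proof and cited from the literature (as printed in \eqref{ss} it even carries a typo, the factors $z_1,z_2$ being missing on the left-hand side, though the subsequent computation uses the full form), then bounds $|u_j|^{p}+|v_j|^{p}\leq 2r^{p}$ and sums over $j$, ending with $C_{p}=2C_0(p)$. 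You instead prove the pointwise bound from scratch: $\phi(z)=|z|^{p}z$ is $C^{1}$ on $\CC\cong\RR^{2}$ with $\|D\phi(z)\|\leq(1+p)|z|^{p}$ (for $z\neq0$ one has $D\phi(z)h=|z|^{p}h+p|z|^{p-2}\operatorname{Re}(\bar{z}h)\,z$, and the derivative extends by zero at the origin even when $0<p<1$, which is exactly the delicate point you flag), and the mean value inequality along the segment $[v_j,u_j]$ --- which stays in the disk of radius $r$ by convexity, as you note --- gives $|\phi(u_j)-\phi(v_j)|\leq(1+p)r^{p}|u_j-v_j|$; squaring and summing finishes. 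Your route buys self-containedness and an explicit constant $C_{p}=1+p$ at the cost of a short calculus computation, whereas the paper's route is shorter on the page but rests on an imported inequality. A further small merit of your write-up: the explicit verification $\||u|^{p}u\|\leq r^{p+1}$ via $|u_j|\leq\|u\|\leq r$ makes rigorous a step the paper passes over silently when it writes $|k+i\nu|\,\|u\|^{p+1}$ in the bound for \eqref{jie}.
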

\begin{proof}[Proof]
	For any $r\textgreater 0$, let $u\in B_{r}$, we obtain
	\begin{equation*}
	\begin{split}
	\|Fu\| &= \|(\lambda+i\mu)\Lambda u-(\gamma+i\beta)u-(k+i\nu)|u|^{p}u+g\|\\
		   &\leq | \lambda+i\mu | \| \Lambda u \| + | \gamma+i\beta | \| u \| + | k+i\nu | \| u \|^{p+1} + \| g \|\\
		   &\leq (4\lambda+4|\mu|+\gamma+|\beta|)r + (k+|\nu|)r^{p+1} +\|g\| =: M_{r}.
	\end{split}
	\end{equation*}
	Suppose that $u,v\in B_{r}$, then we get 
	\begin{equation}
		\| |u|^{p}u - |v|^{p}v\| \leq C_{p}r^{p}\| u-v \|.
	\end{equation}
	Where $C_{p}$ is a constant depending only on $p$ \cite{Liu-2024a}. In fact, over the complex field, $\forall z_{1},z_{2}\in\CC$ and $p \geq 0$, $\exists C_{0}(p)$, such that
	\begin{equation}\label{ss}
		||z_1|^{p} - |z_2|^{p}| \leq C_0(p) (|z_1|^p + |z_2|^p) |z_1 - z_2|.
	\end{equation}
	By \eqref{ss}, we have
	\begin{equation*}
	\begin{split}
		\||u|^{p}u - |v|^{p}v\|^2 &= \sum_{j \in \mathbb{Z}} \left||u_j|^p u_j- |v_j|^p v_j \right|^2\\
			&\leq C_0^2(p) \sum_{j \in \mathbb{Z}} \left(|u_j|^p + |v_j|^p\right)^2 |u_j - v_j|^2\\
			&\leq \sqrt{4C_0^2(p)} r^p \|u - v\| = 2C_0(p) r^p \|u - v\|,
	\end{split}
	\end{equation*}
	where we use $(|u_j|^p + |v_j|^p)^2 \leq (r^p + r^p)^2 = (2r^p)^2 = 4r^{2p}$, and $2C_{0}(p) = C_{p}$.
	
	Therefore, we can get
	\begin{equation*}
	\begin{split}
	\|Fu - Fv\| &= \| (\lambda+i\mu)(\Lambda u - \Lambda v) - (\gamma+i\beta)(u - v) - (k+i\nu)(|u|^{p}u - |v|^{p}v) \|\\
			    &\leq (4\lambda+4|\mu|+\gamma+|\beta| + C_{p}r^{p}(k+|\nu|))\|u-v\|\\
			    & = L_{r}\|u-v\|.
	\end{split}
	\end{equation*}
\end{proof}

\subsection{Existence of a unique solution to the IES of the complex Ginzburg-Landau equation}
To prove the unique existence of the IES for the complex Ginzburg-Landau equation, we choose the special radius and timesize as follows:
\begin{equation}\label{banjinheshijian}
	r^{*} = \sqrt{\eta + \frac{c_{1}}{\gamma - 4\lambda}},\quad \varepsilon^{*}=\min\left\{ \frac{1}{M_{r^{*}+1}},\ \frac{1}{1+L_{r^{*}+1}} \right\} , 
\end{equation}
where $\eta\geq0$ and $c_{1} = \frac{p+1}{p+2}\frac{1}{\left[k(p+2)\right]^{\frac{1}{p+1}}}\|g\|_{\frac{p+2}{p+1}}^{\frac{p+2}{p+1}}$ and we assume that $\gamma\textgreater4\lambda$.  
\begin{theorem}\label{dingli2.1}
	For each $\varepsilon \in \left( 0, \varepsilon^{*} \right] $, and $u_{0} \in B_{r^{*}}$, the equation \eqref{glt} has a unique solution $u_{n}^{\varepsilon}(u_{0}) \in B_{r^{*}}$.
\end{theorem}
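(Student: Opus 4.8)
The plan is to fix the previous iterate $u_{n-1}^{\varepsilon}$ and recast the implicit relation \eqref{glt} as a fixed-point equation. Writing $w$ for the unknown $u_n^{\varepsilon}$, a solution of $w = u_{n-1}^{\varepsilon} + \varepsilon F w$ is exactly a fixed point of the map $T_{\varepsilon}\colon \ell^{2}\to\ell^{2}$, $T_{\varepsilon}(w) := u_{n-1}^{\varepsilon} + \varepsilon F w$. I would run the Banach contraction principle for $T_{\varepsilon}$ on the closed ball $B_{r^{*}+1}$ (the enlargement by $1$ is what forces the appearance of $r^{*}+1$ in the definition \eqref{banjinheshijian} of $\varepsilon^{*}$), and afterwards sharpen the location of the fixed point to $B_{r^{*}}$ by an energy estimate. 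The full statement then follows by induction on $n$, the induction hypothesis being $u_{n-1}^{\varepsilon}\in B_{r^{*}}$.

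For the self-mapping property, assume $u_{n-1}^{\varepsilon}\in B_{r^{*}}$ and take $w\in B_{r^{*}+1}$. The bound \eqref{jie} of Lemma \ref{yinli2.1} gives
\[
\|T_{\varepsilon}(w)\| \leq \|u_{n-1}^{\varepsilon}\| + \varepsilon\|Fw\| \leq r^{*} + \varepsilon M_{r^{*}+1} \leq r^{*}+1,
\]
since $\varepsilon\leq\varepsilon^{*}\leq 1/M_{r^{*}+1}$; hence $T_{\varepsilon}(B_{r^{*}+1})\subseteq B_{r^{*}+1}$. For the contraction property, the Lipschitz estimate \eqref{lianxu} yields, for $w_{1},w_{2}\in B_{r^{*}+1}$,
\[
\|T_{\varepsilon}(w_{1}) - T_{\varepsilon}(w_{2})\| = \varepsilon\|Fw_{1}-Fw_{2}\| \leq \varepsilon L_{r^{*}+1}\|w_{1}-w_{2}\|,
\]
and $\varepsilon\leq\varepsilon^{*}\leq 1/(1+L_{r^{*}+1})$ forces $\varepsilon L_{r^{*}+1}\leq L_{r^{*}+1}/(1+L_{r^{*}+1})<1$. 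As $B_{r^{*}+1}$ is a closed, hence complete, subset of the Hilbert space $\ell^{2}$, the Banach fixed-point theorem produces a unique fixed point $u_{n}^{\varepsilon}\in B_{r^{*}+1}$.

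The remaining, and genuinely delicate, step is to show this fixed point in fact lies in the smaller ball $B_{r^{*}}$, which is where the dissipativity hypothesis $\gamma>4\lambda$ and the precise value of $r^{*}$ enter. I would take the $\ell^{2}$ inner product of $u_{n}^{\varepsilon}-u_{n-1}^{\varepsilon}=\varepsilon F u_{n}^{\varepsilon}$ with $u_{n}^{\varepsilon}$ and use the identity $\mathrm{Re}(u_{n}^{\varepsilon}-u_{n-1}^{\varepsilon}, u_{n}^{\varepsilon}) = \tfrac12(\|u_{n}^{\varepsilon}\|^{2} - \|u_{n-1}^{\varepsilon}\|^{2} + \|u_{n}^{\varepsilon}-u_{n-1}^{\varepsilon}\|^{2})$. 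Discarding the nonnegative last term and computing $\mathrm{Re}(Fu_{n}^{\varepsilon}, u_{n}^{\varepsilon}) = \lambda\|D^{+}u_{n}^{\varepsilon}\|^{2} - \gamma\|u_{n}^{\varepsilon}\|^{2} - k\|u_{n}^{\varepsilon}\|_{p+2}^{p+2} + \mathrm{Re}(g, u_{n}^{\varepsilon})$ term by term — using $(\Lambda u,u)=\|D^{+}u\|^{2}\leq 4\|u\|^{2}$ and absorbing $\mathrm{Re}(g, u_{n}^{\varepsilon})$ against $-k\|u_{n}^{\varepsilon}\|_{p+2}^{p+2}$ by Young's inequality with conjugate exponents $p+2$ and $\tfrac{p+2}{p+1}$ (this is precisely what produces the constant $c_{1}$) — I expect the estimate
\[
\bigl(1+2\varepsilon(\gamma-4\lambda)\bigr)\|u_{n}^{\varepsilon}\|^{2} \leq \|u_{n-1}^{\varepsilon}\|^{2} + 2\varepsilon c_{1}.
\]
With $\|u_{n-1}^{\varepsilon}\|^{2}\leq (r^{*})^{2}=\eta+c_{1}/(\gamma-4\lambda)$, a direct algebraic check reduces the desired inequality $\|u_{n}^{\varepsilon}\|^{2}\leq (r^{*})^{2}$ to $0\leq 2\varepsilon(\gamma-4\lambda)\eta$, which holds since $\eta\geq 0$ and $\gamma>4\lambda$. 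Thus $u_{n}^{\varepsilon}\in B_{r^{*}}$, and because every solution lying in $B_{r^{*}}$ is a fortiori a fixed point of $T_{\varepsilon}$ in $B_{r^{*}+1}$, uniqueness in $B_{r^{*}}$ is inherited from the Banach step; the induction on $n$ then closes. I anticipate the main obstacle to be this energy estimate rather than the contraction argument: the signs in $\mathrm{Re}(Fu_{n}^{\varepsilon}, u_{n}^{\varepsilon})$ must align so that the dissipation $-(\gamma-4\lambda)\|u_{n}^{\varepsilon}\|^{2}$ dominates, and the Young-inequality balancing of the forcing against the $p$-th order nonlinearity must reproduce exactly $c_{1}$ so that the chosen $r^{*}$ makes $B_{r^{*}}$ invariant.
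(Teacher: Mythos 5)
Your proposal is correct and follows essentially the same route as the paper's proof: a Banach contraction argument for $w \mapsto u_{n-1}^{\varepsilon} + \varepsilon F w$ on $B_{r^{*}+1}$ using exactly the bounds \eqref{jie}, \eqref{lianxu} and the choice of $\varepsilon^{*}$ in \eqref{banjinheshijian}, combined with the energy estimate $\bigl(1+2\varepsilon(\gamma-4\lambda)\bigr)\|u_{n}^{\varepsilon}\|^{2} \leq \|u_{n-1}^{\varepsilon}\|^{2} + 2\varepsilon c_{1}$ to keep the solution in $B_{r^{*}}$, and induction on $n$. The only cosmetic difference is that you derive the energy inequality from the identity $\mathrm{Re}(u_{n}^{\varepsilon}-u_{n-1}^{\varepsilon},u_{n}^{\varepsilon}) = \tfrac12\bigl(\|u_{n}^{\varepsilon}\|^{2}-\|u_{n-1}^{\varepsilon}\|^{2}+\|u_{n}^{\varepsilon}-u_{n-1}^{\varepsilon}\|^{2}\bigr)$ rather than from the Cauchy--Schwarz bound $\mathrm{Re}(u_{n-1}^{\varepsilon},u_{n}^{\varepsilon}) \leq \tfrac12\|u_{n-1}^{\varepsilon}\|^{2}+\tfrac12\|u_{n}^{\varepsilon}\|^{2}$ used in the paper's Step 1; both yield the same inequality.
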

\begin{proof}[Proof]
	\begin{description}
	\item[Step 1: ](positive invariance) We prove the solution $u_{n}^{\varepsilon}$ of the IES \eqref{glt} satisfies $u_{n}^{\varepsilon} \in B_{r^{*}}$, if $u_{n-1}^{\varepsilon} \in B_{r^{*}}$. Taking the inner product of \eqref{glt} by $u_{n}^{\varepsilon}$ to find
	\begin{equation*}
		\|u_n^\varepsilon\|^2 = (u_{n-1}^{\varepsilon}, u_n^\varepsilon) + \varepsilon (\lambda + i\mu ) (\Lambda u_n^\varepsilon, u_n^\varepsilon) - \varepsilon (\gamma + i\beta) \|u_n^\varepsilon\|^2\\
		-\varepsilon (k + i\nu) (|u_n^\varepsilon|^p u_n^\varepsilon, u_n^\varepsilon) + \varepsilon (g, u_n^\varepsilon),
	\end{equation*}
	then we take the real part
	\begin{equation*}
		\begin{aligned}
		\|u_{n}^\varepsilon\|^2 &= \operatorname{Re}[(u_{n-1}^{\varepsilon}, u_{n}^\varepsilon)] + \varepsilon \lambda (\Lambda u_{n}^\varepsilon, u_{n}^\varepsilon) - \varepsilon \gamma \|u_{n}^\varepsilon\|^2 - \varepsilon k (|u_{n}^\varepsilon|^p u_{n}^\varepsilon, u_{n}^\varepsilon) + \varepsilon \operatorname{Re}[(g, u_{n}^\varepsilon)]\\
		&\leq \frac{1}{2} \|u_{n-1}^{\varepsilon}\|^2 + \frac{1}{2} \|u_{n}^\varepsilon\|^2 + 4\varepsilon \lambda \|u_{n}^\varepsilon\|^2 - \varepsilon \gamma \|u_{n}^\varepsilon\|^2 - \varepsilon k \|u_{n}^\varepsilon\|^{p+2}_{p+2} + \varepsilon \operatorname{Re}[(g, u_{n}^\varepsilon)],
		\end{aligned}
	\end{equation*}
	by Holder's inequality and Young's inequality
	\begin{equation*}
	\begin{split}
		\operatorname{Re}[(g, u_{n}^\varepsilon)] &\leq | (g,u_{n}^{\varepsilon}) | \leq \|u_{n}^{\varepsilon}\|_{p+2} \|g\|_{\frac{p+2}{p+1}}\\
		&\leq k\|u_{n}^\varepsilon\|_{p+2}^{p+2}+\frac{p+1}{p+2}\frac{1}{\left[k(p+2)\right]^{\frac{1}{p+1}}}\|g\|_{\frac{p+2}{p+1}}^{\frac{p+2}{p+1}},
	\end{split}
	\end{equation*}
	thus, we obtain
	\begin{equation}\label{ss1}
	\begin{aligned}
		\|u_{n}^\varepsilon\|^2&\leq\frac{1}{2}\|u_{n-1}^{\varepsilon}\|^2+\frac{1}{2}\|u_{n}^\varepsilon\|^2+(4\varepsilon\lambda-\varepsilon\gamma)\|u_{n}^\varepsilon\|^2-\varepsilon k\|u_{n}^\varepsilon\|_{p+2}^{p+2}\\
		&\quad+\varepsilon k\|u_{n}^\varepsilon\|_{p+2}^{p+2}+\varepsilon\frac{p+1}{p+2}\frac{1}{\left[k(p+2)\right]^{\frac{1}{p+1}}}\|g\|_{\frac{p+2}{p+1}}^{\frac{p+2}{p+1}}\\
		&=:\frac{1}{2}\|u_{n-1}^{\varepsilon}\|^2+\frac{1}{2}\|u_{n}^\varepsilon\|^2+(4\varepsilon\lambda-\varepsilon\gamma)\|u_{n}^\varepsilon\|^2+\varepsilon c_1,
	\end{aligned}
	\end{equation}
	then we can reorganize \eqref{ss1}
	\begin{equation}
	\begin{split}
		\|u_{n}^\varepsilon\|^2&\leq\frac{1}{1+(2\gamma-8\lambda)\varepsilon}(\|u_{n-1}^{\varepsilon}\|^2+2\varepsilon c_1)\\
		&\leq \frac{\eta}{1+(2\gamma-8\lambda)\varepsilon} + \frac{c_{1}}{\gamma - 4\lambda}\\
		&\leq \eta + \frac{c_{1}}{\gamma - 4\lambda} = r^{*},
	\end{split}
	\end{equation}
	which implies $u_{n}^{\varepsilon} \in B_{r^{*}}$.
	\item[Step 2:](unique existence for n=1) For $u_{0} \in B_{r^{*}}$ and $\varepsilon \in \left(0,\varepsilon^{*}\right]$, we define the operator $\Phi_{u_{0}}^{\varepsilon}$ by
	\begin{equation*}
	\Phi_{u_{0}}^{\varepsilon}y = u_{0} + \varepsilon Fy, \ \forall y\in \ell^{2}.
	\end{equation*}
	If $y\in B_{r^{*}+1}$, we deduce from \eqref{jie} and \eqref{banjinheshijian} that
	\begin{equation*}
		\| \Phi_{u_{0}}^{\varepsilon}y \| \leq \|u_{0}\| + \varepsilon\|Fy\|\leq r^{*} + \varepsilon^{*}M_{r^{*}+1}\leq r^{*}+1.
	\end{equation*}
	Thus, the operator $\Phi_{u_0}^{\varepsilon}: B_{r*+1} \to B_{r*+1}$ is well defined. From \eqref{lianxu} and \eqref{banjinheshijian}, for any $y,z\in \ell^{2}$, we have
	\begin{equation}
		\|\Phi ^\varepsilon u_0(y) - \Phi ^\varepsilon u_0(z)\| = \varepsilon\| Fy-Fz \|\leq\frac{L^{r^{*}+1}}{1+L^{r^{*}+1}}\| y-z \|. 
	\end{equation}
	where $\frac{L^{r^{*}+1}}{1+L^{r^{*}+1}} \textless 1$. Therefore, $\forall \varepsilon \in \left(0, \varepsilon^*\right]$ and $u_0 \in B_{r^*}$, the operator $\Phi ^\varepsilon: B_{r^*+1} \to B_{r^*+1}$ is a contraction. 
	By the contraction mapping principle, $\Phi_{u_0} ^\varepsilon$ has a unique fixed point $u^\varepsilon \in B_{r^*+1}$. This is a solution to \eqref{glt}, and the solution is unique in $B_{r^*+1}$.
	\item [Step 3:](unique existence for all $n \in \NN$) We follow a similar proof proceess based on Step 2. Using $u_{n-1}^\varepsilon\in B_{r^*}$ as initial data, we obtain that for all $n\in\mathbb{N}$ and $\varepsilon\in(0,\varepsilon^*]$, equation \eqref{glt} has a unique solution $u_n^\varepsilon\in B_{r^*}$. This completes the proof.
	\end{description}
\end{proof}
\subsection{Taylor expansion and discretization error}
According to lemma \ref{yinli2.1}, the equation \eqref{gl} has a local unique solution $u = (u_{i}(t))_{i\in\ZZ}$, for $t\in \left[0,T_{\max}\right)$ as established in \cite{Kl-1977}. We now demonstrate that this local solution extends globally, i.e., $T_{\max}=\infty$.
\begin{lemma}\label{yinli2.2}
	Assume that $4\lambda-\gamma \textless 0$, for any $u_{0} \in \ell^{2}$, the lattice system \eqref{gl} of the complex Ginzburg-Landau equation has a unique solution $u(\cdot,u_{0}) \in C(\left[0,\infty\right))$. Furthermore, the ball $B_{r^{*}}$ in Theorem \ref{dingli2.1} is positively invariant and absorbing for the lattice system \eqref{gl}, i.e.
	\begin{align}
		u(t,u_{0})\in B_{r^{*}},\ \forall t \geq 0, \, u_{0}\in B_{r^{*}},\\
		\limsup_{t \rightarrow +\infty} \sup_{\|u_{0}\|\leq r} \| u(t,u_{0}) \|\textless r^{*},\ \forall r \textgreater 0.
	\end{align}
\end{lemma}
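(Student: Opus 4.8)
The plan is to combine the local well-posedness already available from Lemma \ref{yinli2.1} with a uniform a priori estimate in $\ell^{2}$ that rules out finite-time blow-up, and then to read off both the invariance and the absorbing property from the very same estimate. Since the vector field $F$ is locally Lipschitz by \eqref{lianxu}, the theory of \cite{Kl-1977} gives, for each $u_{0}\in\ell^{2}$, a unique local solution $u(\cdot,u_{0})\in C([0,T_{\max}))$; the only point to establish for global existence is that $\|u(t)\|$ cannot escape to infinity on a finite interval.

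First I would derive the energy inequality. Taking the inner product of \eqref{gl} with $u$ and extracting the real part, I would use that $(\Lambda u,u)=\|D^{+}u\|^{2}$ is real and satisfies $0\le(\Lambda u,u)\le 4\|u\|^{2}$, that $\operatorname{Re}[(\gamma+i\beta)\|u\|^{2}]=\gamma\|u\|^{2}$, and that $(|u|^{p}u,u)=\|u\|_{p+2}^{p+2}$ is real so that $\operatorname{Re}[(k+i\nu)(|u|^{p}u,u)]=k\|u\|_{p+2}^{p+2}$. The forcing term is controlled exactly as in Step 1 of Theorem \ref{dingli2.1}: by H\"older's and Young's inequalities $\operatorname{Re}(g,u)\le k\|u\|_{p+2}^{p+2}+c_{1}$, so the dissipative nonlinear term cancels and one is left with
$$\tfrac{1}{2}\tfrac{d}{dt}\|u\|^{2}\le (4\lambda-\gamma)\|u\|^{2}+c_{1},$$
that is $\tfrac{d}{dt}\|u\|^{2}\le -2(\gamma-4\lambda)\|u\|^{2}+2c_{1}$ with $\gamma-4\lambda>0$ by hypothesis.

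Then I would apply Gronwall's inequality to obtain
$$\|u(t)\|^{2}\le \|u_{0}\|^{2}e^{-2(\gamma-4\lambda)t}+\frac{c_{1}}{\gamma-4\lambda}\bigl(1-e^{-2(\gamma-4\lambda)t}\bigr),\qquad t\in[0,T_{\max}).$$
The right-hand side is bounded uniformly in $t$, so $\|u(t)\|$ stays finite and $T_{\max}=+\infty$, yielding the global solution $u(\cdot,u_{0})\in C([0,\infty))$. For positive invariance, if $\|u_{0}\|\le r^{*}$ then, writing $\alpha=e^{-2(\gamma-4\lambda)t}\in(0,1]$ and recalling $(r^{*})^{2}=\eta+\tfrac{c_{1}}{\gamma-4\lambda}\ge \tfrac{c_{1}}{\gamma-4\lambda}$ from \eqref{banjinheshijian}, the bound becomes a convex combination $\|u(t)\|^{2}\le \alpha(r^{*})^{2}+(1-\alpha)\tfrac{c_{1}}{\gamma-4\lambda}\le (r^{*})^{2}$, so $u(t,u_{0})\in B_{r^{*}}$ for all $t\ge0$. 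For the absorbing property, letting $t\to\infty$ in the Gronwall bound gives $\limsup_{t\to\infty}\sup_{\|u_{0}\|\le r}\|u(t,u_{0})\|^{2}\le \tfrac{c_{1}}{\gamma-4\lambda}<(r^{*})^{2}$ whenever $\eta>0$, which is exactly the claimed strict inequality.

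The main obstacle is the global-existence step, and within it the treatment of the superlinear nonlinearity: one must verify that $(|u|^{p}u,u)$ is real and nonnegative so that, after taking real parts, it produces the term $-k\|u\|_{p+2}^{p+2}$ with the correct dissipative sign, and that this is precisely absorbed by the Young estimate of the forcing. Everything afterwards is Gronwall bookkeeping, so that the invariance and absorbing statements are immediate consequences of the single a priori bound; the strictness in the absorbing inequality simply requires choosing $\eta>0$ in \eqref{banjinheshijian}.
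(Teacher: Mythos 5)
Your proposal is correct and follows essentially the same route as the paper: the same real-part energy inequality $\frac{d}{dt}\|u\|^{2}\le(8\lambda-2\gamma)\|u\|^{2}+2c_{1}$ (with the nonlinear term $-2k\|u\|_{p+2}^{p+2}$ cancelled against the Young estimate of the forcing), the same Gronwall bound, and the same reading-off of global existence, positive invariance, and absorption. The only difference is in the last step: where the paper splits the absorbing property into the two cases $r^{2}\le \frac{c_{1}}{\gamma-4\lambda}$ and $r^{2}>\frac{c_{1}}{\gamma-4\lambda}$, you pass directly to the $\limsup$ in the Gronwall bound, which is cleaner and also makes explicit that the strict inequality $\limsup < r^{*}$ requires $\eta>0$, a point the paper's own estimates (which only yield a bound by $\eta+\frac{c_{1}}{\gamma-4\lambda}=(r^{*})^{2}$) leave implicit.
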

\begin{proof}[Proof]
	First we take the real part of the inner product of the lattice system \eqref{gl} with $u(t)$, by Holder's inequality and Young's inequality, we have
	\begin{equation*}
		\begin{aligned}
		\frac{d\|u\|^2}{dt}&=2\lambda(\Lambda u,u)-2\gamma\|u\|^2-2k(|u|^{p}u,u)+2\operatorname{Re}(g,u)\\
		&=2\lambda (D^{+}u, D^{+}u) -2\gamma \|u\|^2 - 2k\sum_{j \in \mathbb{Z}} |u_j|^p u_j \overline{u_j} + 2\operatorname{Re}(g, u)\\
		&=2 \lambda ||D^{+}u||^2 - 2 \gamma \|u\|^2 - 2k\sum_{j \in \mathbb{Z}} |u_j|^{p+2} + 2\operatorname{Re}(g, u)\\
		&\leq 8\lambda\|u\|^2-2\gamma\|u\|^2-2k\|u\|^{p+2}_{p+2}+2|(g,u)|\\
		&\leq 8\lambda\|u\|^2-2\gamma\|u\|^2-2k\|u\|^{p+2}_{p+2}+2k\|u\|^{p+2}_{p+2}+2\frac{p+1}{p+2}\frac{1}{\left[k(p+2)\right]^{\frac{1}{p+1}}} \|g\|^{\frac{p+2}{p+1}}_{\frac{p+2}{p+1}}\\
		&\leq (8\lambda-2\gamma)\|u\|^2+2\frac{p+1}{p+2}\frac{1}{\left[k(p+2)\right]^{\frac{1}{p+1}}} \|g\|^{\frac{p+2}{p+1}}_{\frac{p+2}{p+1}}\\
		&=(8\lambda-2\gamma)\|u\|^2+2c_1.
		\end{aligned}
	\end{equation*}
	By Gronwall Lemma, we deduce that
	\begin{equation}\label{quanjuyoujie}
		\| u(t) \|^{2} \leq \| u_{0} \|^{2}{\rm{e}}^{(8\lambda-2\gamma)t}+\frac{2c_{1}}{8\lambda-2\gamma}\left({\rm{e}}^{(8\lambda-2\gamma)t}-1\right).
	\end{equation}
	Thus, we have 
	\begin{equation*}
		\max_{0\leq t \leq T}\| u(t) \|^{2}\leq \| u_{0} \|^{2} + \frac{2c_{1}}{2\gamma-8\lambda} \textless +\infty, \ \forall T\textgreater0.
	\end{equation*}
	Therefore, a global solution exists, and its uniqueness is guaranteed by \eqref{lianxu}.
	
	For any $u_{0}\leq r^{*}$ and $t\geq0$, where $r^{*}$ is difined in \eqref{banjinheshijian}, it can be deduced from \eqref{quanjuyoujie} that
	\begin{equation*}
	\begin{aligned}
		\| u(t,u_{0}) \|^{2} &\leq \| u_{0} \|^{2}{\rm{e}}^{(8\lambda-2\gamma)t}+\frac{2c_{1}}{8\lambda-2\gamma}\left({\rm{e}}^{(8\lambda-2\gamma)t}-1\right)\\
		&\leq (\eta + \frac{c_{1}}{\gamma-4\lambda}){\rm{e}}^{(8\lambda-2\gamma)t}  +\frac{2c_{1}}{8\lambda-2\gamma}\left({\rm{e}}^{(8\lambda-2\gamma)t}-1\right)\\
		&\leq \eta + \frac{c_{1}}{\gamma-4\lambda} = (r^{*})^{2}.
	\end{aligned}
	\end{equation*}
	Therefore, $B_{r^{*}}$ is positively invariant.
	
	Fruthermore, if $\| u_{0} \|\leq r,\ \forall r\textgreater0$, it follows from \eqref{quanjuyoujie} that
	\begin{equation*}
		\| u(t,u_{0}) \|^{2} \leq r^{2}{\rm{e}}^{(8\lambda-2\gamma)t}  +\frac{2c_{1}}{8\lambda-2\gamma}\left({\rm{e}}^{(8\lambda-2\gamma)t}-1\right).
	\end{equation*}
	\begin{description}
	\item[(i)] If $r^{2}\leq \frac{c_{1}}{\gamma-4\lambda}$, it is obvious that
	\begin{equation*}
		r^{2}{\rm{e}}^{(8\lambda-2\gamma)t}  +\frac{2c_{1}}{8\lambda-2\gamma}\left({\rm{e}}^{(8\lambda-2\gamma)t}-1\right)\leq \eta + \frac{c_{1}}{\gamma-4\lambda},\ \forall t\textgreater0.
	\end{equation*}
	\item[(ii)] If $r^{2}\textgreater \frac{c_{1}}{\gamma-4\lambda}$, 
	\begin{equation*}
		r^{2}{\rm{e}}^{(8\lambda-2\gamma)t}  +\frac{2c_{1}}{8\lambda-2\gamma}\left({\rm{e}}^{(8\lambda-2\gamma)t}-1\right)\leq \eta + \frac{c_{1}}{\gamma-4\lambda},\ \forall t \geq \frac{\ln \eta-\ln(r^{2}+\frac{c_{1}}{4\lambda-\gamma})}{8\lambda-2\gamma}.
	\end{equation*}
	\end{description}
	Thus, $B_{r^{*}}$ is absorbing, and we complete the proof.
\end{proof}

To account the discretization error, we establish a one-step second-order Taylor expansion for the continuous-time solutions.
\begin{lemma}\label{yinli2.3}
	For $t_n-t_{n-1}=\varepsilon > 0$ and $n \in \mathbb{N}$, there exists an operator $\mathcal{O}_{n,\varepsilon}:B_{r^*} \to \ell^2$ such that
	\begin{equation}\label{taylor}
		u(t_{n-1},y) = \varepsilon Fu(t_{n},y) + \varepsilon^{2}\mathcal{O}_{n,\varepsilon}y,\ \forall y \in B_{r^{*}},
	\end{equation}
	where $F$ is the vector field in \eqref{gl}. Moreover the operator$\mathcal{O}_{n,\varepsilon}$ is uniformly bounded on $B_{r^{*}}$, i.e.
	\begin{equation}
		\| \mathcal{O}_{n,\varepsilon}y \| \leq C_{0}, \ \forall n\in\NN, y\in B_{r^{*}}
	\end{equation}
\end{lemma}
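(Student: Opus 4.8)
The plan is to produce the remainder operator directly as the rescaled one-step residual of the implicit Euler scheme evaluated along the exact flow, and then to extract the $\varepsilon^2$ bound from the two estimates of Lemma~\ref{yinli2.1}. For $y\in B_{r^*}$ I would set
\[
\mathcal{O}_{n,\varepsilon}y:=\frac{1}{\varepsilon^{2}}\Big(u(t_n,y)-u(t_{n-1},y)-\varepsilon\,Fu(t_n,y)\Big),
\]
so that \eqref{taylor} holds by construction; the entire content of the lemma is then the uniform bound $\|\mathcal{O}_{n,\varepsilon}y\|\le C_0$. The key preliminary observation is that, because $y\in B_{r^*}$ and $B_{r^*}$ is positively invariant for \eqref{gl} by Lemma~\ref{yinli2.2}, the whole arc $u(s,y)$, $s\in[t_{n-1},t_n]$, stays in $B_{r^*}$. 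This is precisely what allows me to invoke the a priori bound $\|Fu\|\le M_{r^*}$ from \eqref{jie} and the Lipschitz estimate \eqref{lianxu} with the single radius-$r^*$ constants $M_{r^*}$ and $L_{r^*}$ uniformly along the trajectory, and hence uniformly in $n$.

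For the core estimate I would avoid a pointwise second-derivative remainder (awkward in the infinite-dimensional space $\ell^2$) and instead use the integral form. Since $u(\cdot,y)$ solves $u'=Fu$ on $[t_{n-1},t_n]$, the fundamental theorem of calculus gives
\[
u(t_n,y)-u(t_{n-1},y)=\int_{t_{n-1}}^{t_n}Fu(s,y)\,ds,
\]
so that
\[
\varepsilon^{2}\mathcal{O}_{n,\varepsilon}y=\int_{t_{n-1}}^{t_n}\big(Fu(s,y)-Fu(t_n,y)\big)\,ds.
\]
I would then bound the integrand in two steps: first, $\|u(s,y)-u(t_n,y)\|\le\int_{s}^{t_n}\|Fu(\tau,y)\|\,d\tau\le(t_n-s)M_{r^*}\le\varepsilon M_{r^*}$ by \eqref{jie}; second, $\|Fu(s,y)-Fu(t_n,y)\|\le L_{r^*}\|u(s,y)-u(t_n,y)\|\le \varepsilon L_{r^*}M_{r^*}$ by \eqref{lianxu}. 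Integrating over the interval of length $\varepsilon$ yields $\|\varepsilon^{2}\mathcal{O}_{n,\varepsilon}y\|\le \varepsilon^{2}L_{r^*}M_{r^*}$, whence $\|\mathcal{O}_{n,\varepsilon}y\|\le L_{r^*}M_{r^*}=:C_0$, a constant independent of $n$, of $\varepsilon\le\varepsilon^*$, and of $y\in B_{r^*}$.

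I do not expect a serious obstacle: the estimate is a standard consistency computation resting entirely on the two inequalities of Lemma~\ref{yinli2.1}. The one point needing care, and the only place where the earlier results are essential, is the \emph{uniformity} of $C_0$ in $n$: since $t_n$ may be arbitrarily large, the bounds $M_{r^*}$ and $L_{r^*}$ must be valid at every step, which is guaranteed only by the time-uniform a priori control coming from the positive invariance of $B_{r^*}$ in Lemma~\ref{yinli2.2}. If one instead wished to exhibit $\mathcal{O}_{n,\varepsilon}$ as a genuine second-order Taylor remainder, one would write $\varepsilon^{2}\mathcal{O}_{n,\varepsilon}y=\int_{t_{n-1}}^{t_n}(s-t_{n-1})\,u''(s,y)\,ds$ with $u''=DF(u)\,Fu$; this requires differentiability of the nonlinearity $|u|^{p}u$ and an extra bound on $DF$ over $B_{r^*}$, but the integral route above reaches the same conclusion using only the Lipschitz property already available.
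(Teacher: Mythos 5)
Your proposal is correct and follows essentially the same route as the paper's own proof: both define $\mathcal{O}_{n,\varepsilon}$ as the rescaled one-step residual of the implicit Euler relation along the exact flow, rewrite it via the integral form of \eqref{gl} as $\frac{1}{\varepsilon^{2}}\int_{t_{n-1}}^{t_n}\bigl(Fu(t_n,y)-Fu(s,y)\bigr)\,ds$ (up to a sign convention), and bound it using \eqref{jie} and \eqref{lianxu}. The only differences are cosmetic: the paper keeps the factor $(t_n-s)$ inside the final integral to obtain $C_0=\frac{1}{2}L_{r^*}M_{r^*}$ where your cruder bound $\varepsilon M_{r^*}$ gives $C_0=L_{r^*}M_{r^*}$, and your explicit appeal to the positive invariance of $B_{r^*}$ from Lemma~\ref{yinli2.2} makes rigorous a point the paper leaves implicit.
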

\begin{proof}[Proof]
	According to \eqref{gl}
	\begin{equation}\label{p}
	u(t_{n-1}, y) - u(t_n, y) = -\int_{t_{n-1}}^{t_n} F(u(s, y)) ds,
	\end{equation}
	\begin{equation*}
		\begin{aligned}
		\mathcal{O}_{n,\varepsilon}(y) &= -\frac{1}{\varepsilon^2} \int_{t_{n-1}}^{t_n} F(u(s, y)) ds + \frac{1}{\varepsilon} F(u(s, y)) ds\\
		&= \frac{1}{\varepsilon^2} \int_{t_{n-1}}^{t_n} [F(u(t_n, y)) - F(u(s, y))] ds.
	\end{aligned}
	\end{equation*}
	By Lemma \ref{yinli2.3}, for $\forall s\in [ t_{n-1},t_{n} ]$, we have
	\begin{equation}\label{pp}
		\|F(u(t_n, y)) - F(u(s, y))\| \leq L_{r^{*}} \|u(t_n) - u(s)\|,
	\end{equation}
	where $L_{r^{*}}$ is defined in \eqref{lianxu}. By \eqref{p}, we have
	\begin{equation*}
	\begin{aligned}
		\|u(t_n) - u(s)\| &= \left\| -\int_{t_n}^{s} F(u(\tau, y)) d\tau \right\|\\
		&\leq \int_{s}^{t_n} \|F(u(\tau, y))\| d\tau\\
		&\leq \varepsilon M_{r^{*}}.
	\end{aligned}
	\end{equation*}
	Thus, we obtain
	\begin{equation*}
	\begin{aligned}
		\|\mathcal{O}_{n,\varepsilon}(y)\| &\leq \frac{1}{\varepsilon^2} \int_{t_{n-1}}^{t_n} \|F(u(t_n, y)) - F(u(s, y))\| ds \\
		&\leq \frac{L_{r^{*}}}{\varepsilon^2} \int_{t_{n-1}}^{t_n} \|u(t_n) - u(s)\| ds \\
		&= \frac{1}{2}L_{r^{*}} M_{r^{*}} =: C_0.
	\end{aligned}
	\end{equation*}
\end{proof}
In the following therorem, we provide the error estimates between the continuous-time solution and its discrete-time solution, demonstrating that the discrete-time solution convergers to the continuous-time solution as $\varepsilon \to 0^{+}$.
\begin{theorem}\label{dingli2.2}
	Denoting by $u(t,y)$ and $u_{n}^{ \varepsilon}(y)$ the solution to \eqref{gl} and \eqref{glt} with the initial condition $y \in B_{r^{*}}$. Then we get
	\begin{equation}
		\| u(\varepsilon,u_{n-1}^{\varepsilon}(y)) - u_{n}^{\varepsilon}(y) \| \leq L_{r^{*}}M_{r^{*}}L_{r^{*}+1}\varepsilon^{2}, \ \forall \varepsilon \in (0,\varepsilon^{*}], \ n\in \NN.
	\end{equation}
	Furthermore, for every $T\textgreater0$, we get
	\begin{equation}
		\|u(t_n, y) - u_{t_n}^\varepsilon(y)\| \leq \frac{M_r}{2} e^{L_r T} \varepsilon, \quad \forall t_n := \varepsilon n \in (0, T].
	\end{equation}
\end{theorem}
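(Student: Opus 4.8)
The plan is to prove the two estimates in sequence: first the one-step (local) error of order $\varepsilon^2$, then bootstrap it into the global order-$\varepsilon$ bound by accumulating the local errors through a discrete Gronwall argument. Throughout I will lean on the fact, supplied by Theorem \ref{dingli2.1} and the positive invariance in Lemma \ref{yinli2.2}, that both the continuous flow and the scheme keep every iterate inside $B_{r^*}$, so a single Lipschitz constant governs all the differences of $F$ that arise.

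For the local estimate I would fix $w_0 := u_{n-1}^{\varepsilon}(y)\in B_{r^*}$ and compare the continuous one-step image $u(\varepsilon,w_0)$ with the implicit update $u_n^\varepsilon(y)$. Applying the second-order Taylor expansion of Lemma \ref{yinli2.3} to the flow started at $w_0$ (with $t_{n-1}\to 0$, $t_n\to\varepsilon$) gives
\begin{equation*}
u(\varepsilon, w_0) = w_0 + \varepsilon F\big(u(\varepsilon, w_0)\big) - \varepsilon^2 \mathcal{O}_{n,\varepsilon} w_0,
\end{equation*}
whereas the scheme \eqref{glt} reads $u_n^\varepsilon(y) = w_0 + \varepsilon F(u_n^\varepsilon(y))$. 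Subtracting and invoking the Lipschitz bound \eqref{lianxu} on $B_{r^*}\subset B_{r^*+1}$ yields
\begin{equation*}
\| u(\varepsilon, w_0) - u_n^\varepsilon(y)\| \leq \varepsilon L_{r^*+1}\, \| u(\varepsilon, w_0) - u_n^\varepsilon(y)\| + \varepsilon^2 C_0 .
\end{equation*}
Since $\varepsilon\le\varepsilon^*\le (1+L_{r^*+1})^{-1}$ forces $\varepsilon L_{r^*+1}<1$, the first term on the right is absorbed into the left, and with $C_0=\tfrac12 L_{r^*}M_{r^*}$ this delivers the claimed $\mathcal{O}(\varepsilon^2)$ local bound (the stated constant $L_{r^*}M_{r^*}L_{r^*+1}$ appears after the harmless estimate $\tfrac12(1+L_{r^*+1})\le L_{r^*+1}$).

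For the global estimate I would set $e_n:=\|u(t_n,y)-u_n^\varepsilon(y)\|$ with $e_0=0$, and use the flow identity $u(t_n,y)=u(\varepsilon,u(t_{n-1},y))$ to split
\begin{equation*}
e_n \leq \big\| u(\varepsilon, u(t_{n-1},y)) - u(\varepsilon, u_{n-1}^\varepsilon(y))\big\| + \big\| u(\varepsilon, u_{n-1}^\varepsilon(y)) - u_n^\varepsilon(y)\big\| .
\end{equation*}
The second term is the local error just bounded. For the first, a continuous-dependence estimate — obtained by testing the equation satisfied by the difference of two flows against itself and applying \eqref{lianxu} on the invariant ball — gives the factor $\|u(\varepsilon,a)-u(\varepsilon,b)\|\le e^{L_{r^*}\varepsilon}\|a-b\|$, hence the recurrence $e_n\le e^{L_{r^*}\varepsilon}e_{n-1}+C\varepsilon^2$. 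Unrolling it and summing the geometric series, then using $e^{L_{r^*}\varepsilon}-1\ge L_{r^*}\varepsilon$ together with $t_n\le T$, collapses $\varepsilon^2\sum_j e^{L_{r^*}\varepsilon j}$ into a single power of $\varepsilon$ times $e^{L_{r^*}T}$, which is the asserted order-$\varepsilon$ bound.

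The main obstacle I anticipate is the implicitness of the method: because $u_n^\varepsilon$ sits inside $F$, the local residual cannot simply be read off but must be absorbed through the term $\varepsilon\|F(u(\varepsilon,w_0))-F(u_n^\varepsilon)\|$, and this is precisely where the step restriction $\varepsilon\le\varepsilon^*$ is indispensable. A secondary technical point is verifying that $u(t_{n-1},y)$, $u_{n-1}^\varepsilon(y)$ and the one-step continuous image all remain in $B_{r^*}$, so that one uniform constant $L_{r^*}$ (respectively $L_{r^*+1}$) controls every $F$-difference independently of $n$; this is exactly what Lemma \ref{yinli2.2} and Theorem \ref{dingli2.1} guarantee. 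The constants $M_r,L_r$ in the final inequality are to be read as $M_{r^*},L_{r^*}$, up to the numerical factors generated above.
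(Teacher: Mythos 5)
Your proof is correct in substance and takes essentially the route the paper intends: the paper itself gives no argument for Theorem \ref{dingli2.2}, deferring entirely to \cite{Liu-2024b}, but its Lemma \ref{yinli2.3} (with $C_0=\tfrac12 L_{r^*}M_{r^*}$, which you correctly read in the form matching that lemma's proof, $u(\varepsilon,w_0)=w_0+\varepsilon F(u(\varepsilon,w_0))-\varepsilon^2\mathcal{O}_{n,\varepsilon}w_0$, rather than the garbled displayed statement) exists precisely to feed your local-error computation, and your absorption of the implicit term via $\varepsilon\le\varepsilon^*$ followed by the discrete Gronwall bootstrap is the standard completion used in that reference. One cosmetic caveat: your step $\tfrac12(1+L_{r^*+1})\le L_{r^*+1}$ holds only when $L_{r^*+1}\ge 1$, which the hypotheses do not force, so strictly you obtain the constant $\tfrac12 L_{r^*}M_{r^*}(1+L_{r^*+1})$ (and the corresponding global constant) rather than the exact stated ones --- but the $O(\varepsilon^2)$ local and $O(\varepsilon)$ global rates, which are the content of the theorem, are fully established.
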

The proofs of Therorem \ref{dingli2.2} can be found in \cite{Liu-2024b}, where the $F$ is the vector field  of the complex Ginzburg-Landau equation.

\section{Upper semi-convergence of the numerical attractor}
\subsection{Existence of a unique global attractor for the continuous-time system}
We consider the existence of a compact global attractor $\mathcal{A}$ of the lattice system \eqref{gl}:
\begin{equation*}
	u(t,\mathcal{A}) = \mathcal{A},\ \forall t\geq0\ and\  \lim_{t \to +\infty}d_{\ell^{2}}(u(t,B_{r}),\mathcal{A}),\ \forall r \textgreater0.
\end{equation*}
\begin{theorem}\label{dingli3.1}
	The lattice system \eqref{gl} has a unique global attractor, which can be expressed as:
	\begin{equation*}
		\mathcal{A} = \bigcap_{T\textgreater0}\overline{\bigcup_{t\geq T}u(t,B_{r^{*}})},
	\end{equation*}
	where $r^{*}$ is defined in \eqref{banjinheshijian}.
\end{theorem}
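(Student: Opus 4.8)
The plan is to invoke the standard existence criterion for global attractors of dissipative dynamical systems: a continuous semigroup that possesses a bounded absorbing set and is asymptotically compact admits a unique global attractor, equal to the $\omega$-limit set of the absorbing set. Two of the three ingredients are already available from the preceding results, so the genuine work lies in establishing asymptotic compactness. First I would verify that the solution operator $S(t)y := u(t,y)$ defines a continuous semigroup on $\ell^{2}$. Global existence, uniqueness, and the semigroup property on $[0,\infty)$ follow from Lemma \ref{yinli2.2}, while continuity of $S(t)$ in the initial datum is a direct consequence of the local Lipschitz bound \eqref{lianxu} together with a Gronwall argument. By Lemma \ref{yinli2.2} the closed ball $B_{r^{*}}$ is positively invariant and absorbing, which supplies the required bounded absorbing set.

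The main obstacle is asymptotic compactness, because in the infinite lattice $\ell^{2}$ bounded sets fail to be precompact and no Rellich-type compact embedding is available. The standard remedy is a uniform tail estimate. I would fix a smooth cutoff $\theta \in C^{1}([0,\infty),[0,1])$ with $\theta(s)=0$ for $s\leq 1$, $\theta(s)=1$ for $s\geq 2$, and $|\theta'|\leq C$; set $\theta_{K,j}=\theta(|j|/K)$ for $K\in\NN$; and take the real part of the inner product of \eqref{gl} against the weighted sequence $(\theta_{K,j}u_{j})_{j\in\ZZ}$. The damping term yields $-\gamma\sum_{j}\theta_{K,j}|u_{j}|^{2}$, the nonlinearity contributes $-k\sum_{j}\theta_{K,j}|u_{j}|^{p+2}\leq 0$ with the favorable sign, and the forcing term is handled by Hölder and Young exactly as in Lemma \ref{yinli2.2}, except that now only the tail of $g$ enters. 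The delicate term is the discrete diffusion $(\lambda+i\mu)\Lambda u$: a discrete summation by parts splits it into a diagonal part bounded by $4\lambda\sum_{j}\theta_{K,j}|u_{j}|^{2}$ and cross terms involving the first differences of $\theta_{K,\cdot}$, which are $O(1/K)$ and hence negligible for large $K$.

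Collecting the estimates, and writing $E_{K}(t):=\sum_{j}\theta_{K,j}|u_{j}(t)|^{2}$, I expect an inequality of the form
\begin{equation*}
\frac{d}{dt}E_{K}(t)\leq (8\lambda-2\gamma)E_{K}(t) + \frac{C}{K} + \delta_{K},
\end{equation*}
where $\delta_{K}\to 0$ as $K\to\infty$ because $g$ has vanishing tails. Since $8\lambda-2\gamma<0$ under the standing hypothesis $\gamma>4\lambda$, Gronwall's inequality shows that $E_{K}(t)$, and therefore the genuine tail $\sum_{|j|\geq 2K}|u_{j}(t)|^{2}$, is uniformly small over all $u_{0}\in B_{r^{*}}$ once $K$ and $t$ are large. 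This tail control is the crux of the argument and is where I anticipate the most care being needed, particularly in bounding the weighted diffusion cross-terms.

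Finally I would combine the uniform tail smallness with precompactness in the finitely many central coordinates (Bolzano--Weierstrass), which together constitute the classical criterion for relative compactness of a subset of $\ell^{2}$: given any sequence $S(t_{n})y_{n}$ with $y_{n}\in B_{r^{*}}$ and $t_{n}\to\infty$, the tails are uniformly negligible while a diagonal subsequence converges coordinatewise, yielding a convergent subsequence in $\ell^{2}$. This establishes asymptotic compactness of the semigroup. The abstract theorem then guarantees the existence of a unique global attractor, and its characterization as the $\omega$-limit set of the absorbing ball gives precisely $\mathcal{A}=\bigcap_{T>0}\overline{\bigcup_{t\geq T}u(t,B_{r^{*}})}$, completing the proof.
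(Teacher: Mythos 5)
Your proposal is correct and follows essentially the same route as the paper: the absorbing ball $B_{r^{*}}$ from Lemma \ref{yinli2.2}, a uniform tail estimate obtained by pairing \eqref{gl} with a cutoff-weighted sequence (your $\theta_{K}$ is the paper's $\xi_{l}$), the same term-by-term bounds giving the decay rate $8\lambda-2\gamma<0$, and Gronwall to make the tails uniformly small. The only cosmetic difference is that you spell out how the tail estimate yields asymptotic compactness (diagonal extraction plus the $\ell^{2}$ compactness criterion) before invoking the abstract existence theorem, whereas the paper delegates exactly this step to the cited lattice-attractor references.
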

\begin{proof}[Proof]
	According to Lemma \ref{yinli2.2}, the set $B_{r^{*}}$ is absorbing for the \eqref{gl}. Following a similar result as stated in \cite{Han-2020,Li-2023}, it is sufficient to prove the asymptotic tails property.
	
	We choose the cutoff function $\xi: \RR^{+}\to [0,1]$, which is continuously differentiable and satisfies the following properties:
	\begin{equation*}
		\xi(s) = 
		\begin{cases}
		0,\ 0\leq s \leq 1,\\
		1,\ s\geq2.
		\end{cases}
	\end{equation*}
	Then there exists a constant $c_{2}\textgreater0$ such that for every $l\in\NN$ and $j\in\ZZ$
	\begin{equation}
		|\xi_{l,j+1} - \xi_{l,j}| \leq \frac{c_{2}}{l}, \ where\  \xi_{l,j}=\xi(\frac{|j|}{l}),\ (\xi_{l,j})_{j\in\ZZ} =: \xi_{l}.
	\end{equation}\label{jieduan}
	By \eqref{chafen}, we get
	\begin{equation}
		\| D^{+}\xi_{l} \|_{\infty} \leq \frac{c_{2}}{l}, \ \forall l \in \NN
	\end{equation}
	Let $u = u(t,y)$ be the continuous-time solution of the lattice system \eqref{gl} for $y\in B_{r}$, where $r\textgreater0$ is any chosen radius.
	
	Noting that $(\Lambda u ,v) = (D^{+}u, D^{+}v), \ D^{+}(uv) = vD^{+}u + uD^{+}v$, by taking the inner product of equation \eqref{gl} with $\xi_{l}u$, then we take the real part:
	\begin{equation}\label{lianxuweibuguji}
	\begin{split}
		\frac{1}{2} \frac{d}{dt} \sum_{j\in\ZZ}\xi_{l,j}|u_{j}|^{2}(t) &= \operatorname{Re} \left[((\lambda+i\mu)\Lambda u-(\gamma+i\beta)u-(k+i\nu)|u|^{p}u+g,\ \xi_{l}u)\right],\\
		&=\operatorname{Re} \left[ (\lambda+i\mu)(D^{+}u, D^{+}(\xi_{l}u)) - (\gamma+i\beta)(u, \xi_{l}u) - (k+i\nu)(|u|^{p}u,\xi_{l}u) + (g,\xi_{l}u)      \right].
	\end{split}
	\end{equation}
	By Young's inequality and Holder's inequality
	\begin{equation}\label{diyixiang1}
	\begin{split}
		\operatorname{Re}\left[ \lambda(D^{+}u, D^{+}(\xi_{l}u))  \right] &=  \operatorname{Re} \lambda \left[ (D^{+}u,\xi_{l}D^{+}u) + (D^{+}u,uD^{+}\xi_{l}) \right]\\
		&\leq \lambda|\sum_{j\in\ZZ}\xi_{l,j}(|u_{j+1}|^{2} - u_{j+1}\overline{u_{j}} - \overline{u_{j+1}}u_{j} + |u_{j}|^{2})| + 
		\lambda\| D^{+}u \| \| u \| \| D^{+}\xi_{l} \|_{\infty}\\
		&\leq \lambda\sum_{j\in\ZZ}\xi_{l,j}(2| u_{j+1} |^{2} + 2| u_{j} |^{2}) + 2\lambda \| u \|^{2} \frac{c_{2}}{l},
	\end{split}
	\end{equation}
	where
	\begin{equation*}
	\begin{split}
		\sum_{j\in\ZZ}\xi_{l,j}|u_{j+1}|^{2} &= \sum_{j\in\ZZ} (\xi_{l,j} - \xi_{l,j+1})|u_{j+1}|^{2} + \sum_{j\in\ZZ}\xi_{l,j}|u_{j}|^{2}\\
		& = -\sum_{j\in\ZZ}(D^{+}\xi_{l})_{j}|u_{j+1}|^{2} +\sum_{j\in\ZZ}\xi_{l,j+1}|u_{j+1}|^{2}\\
		&\leq \frac{c_{2}}{l}\| u \|^{2} + \sum_{j\in\ZZ}\xi_{l,j}|u_{j}|^{2}.
	\end{split}
	\end{equation*}
	Then we can rewrite the \eqref{diyixiang1}
	\begin{equation}
		\operatorname{Re}\left[ \lambda(D^{+}u, D^{+}(\xi_{l}u))  \right] \leq 4\lambda\sum_{j\in\ZZ}\xi_{l,j}| u_{j} |^{2} + 4\lambda\frac{c_{2}}{l} r^{2}.
	\end{equation}
	Similar to \eqref{diyixiang1}, we have
	\begin{equation}\label{diyixiang2}
	\begin{split}
		&\quad\operatorname{Re}\left[  i\mu(D^{+}u, D^{+}(\xi_{l}u))   \right]\\
		 &= \operatorname{Re}\left[ i\mu \sum_{j\in\ZZ}\xi_{l,j}|D^{+}u_{j}|^{2} + i\mu(D^{+}u,uD^{+}\xi_{l})    \right]\\
		 &= \operatorname{Re}\left[ i\mu(D^{+}u,uD^{+}\xi_{l})  \right]
		 \leq \mu \| D^{+}u \| \| u \| \| D^{+}\xi_{l} \|_{\infty}
		 \leq 2\mu \frac{c_{2}}{l}\| u \|^{2}.
	\end{split}
	\end{equation}
	For the second term and the third term of \eqref{lianxuweibuguji}, we have
	\begin{equation}\label{dierxiang}
		\operatorname{Re}\left[ -(\gamma+i\beta)(u,\xi_{l}u) \right] = -\gamma\sum_{j\in\ZZ}\xi_{l,j}|u_{j}|^{2}.
	\end{equation}
	\begin{equation}\label{disanxiang}
		\operatorname{Re}\left[ -(k+i\nu)(|u|^{p}u,\xi_{l}u)  \right] = -k\sum_{j\in\ZZ}\xi_{l,j}|u_{j}|^{p+2}.
	\end{equation}
	According to Young's inequality, the last term is estimated as follow:
	\begin{equation}\label{disixiang}
	\begin{split}
		 \operatorname{Re}(g,\xi_{l}u)& \leq |(g,\xi_{l}u)|
		 = (|g|,|\xi_{l}u|)
		 = \sum_{j\in\ZZ}\xi_{l,j}|g_{j}||u_{j}|\\
		& \leq k\sum_{j\in\ZZ} \xi_{l,j}|u_{j}|^{p+2} + \frac{p+1}{p+2}\left[ \frac{1}{k(p+2)} \right]^{\frac{1}{p+1}} \sum_{j\in\ZZ} \xi_{l,j}|g_{j}|^{\frac{p+2}{p+1}}.
	\end{split}
	\end{equation}
	Substituting \eqref{diyixiang1}-\eqref{disixiang} into \eqref{lianxuweibuguji}, we have
	\begin{equation*}
		\frac{1}{2}\frac{d}{dt} \sum_{j\in\ZZ}\xi_{l,j}|u_{j}|^{2}(t) = (4\lambda+2\mu)\frac{c_{2}}{l}\| u \|^{2} + (4\lambda-\gamma)\sum_{j\in\ZZ}\xi_{l,j}|u_{j}|^{2} + \frac{p+1}{p+2}\left[ \frac{1}{k(p+2)} \right]^{\frac{1}{p+1}} \sum_{j\in\ZZ} \xi_{l,j}|g_{j}|^{\frac{p+2}{p+1}}.
	\end{equation*}
	By Gronwall's inequality, for any $u(0)\in B_{r}$ we conclude that
	\begin{equation}\label{lianxuweibugujijieguo}
		\sum_{j\in\ZZ}\xi_{l,j}|u_{j}|^{2}(t) \leq {\rm{e}}^{(8\lambda-2\gamma)t}r^{2} + \frac{1-{\rm{e}}^{(8\lambda-2\gamma)t}}{\gamma-4\lambda} \left[ (4\lambda+2\mu)\frac{c_{2}}{l}\|u\|^{2} +  \frac{p+1}{p+2}\left[ \frac{1}{k(p+2)} \right]^{\frac{1}{p+1}} \sum_{j\in\ZZ} \xi_{l,j}|g_{j}|^{\frac{p+2}{p+1}} \right].
	\end{equation}
	Since $\gamma \textgreater 4\lambda$, $B_{r^{*}}$ is attracting, $\| u\|^{2}$ is bounded as $t\to\infty$. Besides, because of $g\in\ell^{2}$, $\sum_{j\in\ZZ}\xi_{l,j}|g_{j}|^{\frac{p+2}{p+1}}$ is bounded. Thus we obtain
	\begin{equation*}
		\lim_{l,t\to \infty} \sup_{y\in B_{r}} \sum_{|j|\geq 2l} |u_{j}|^{2} \leq \lim_{l,t\to \infty} \sup_{y\in B_{r}} \sum_{j\in\ZZ}\xi_{l,j}|u_{j}(t)|^{2} =0.
	\end{equation*}
	The asymptotic tails property of the continuous-time solution is proven.
\end{proof}
\subsection{Existence of a unique numerical attractor for the IES}
The discrete-time solution $u^{\varepsilon}_{n}$ in the Theorem \ref{dingli2.1} defines a discrete dynamical system on the ball $B_{r^{*}}$, expressed as:
\begin{equation*}
	S_{\varepsilon}(n): B_{r^{*}}\to B_{r^{*}},\ S_{\varepsilon}(n)y = u_{n}^{\varepsilon}(y),\ y\in B_{r^{*}},\ n\in\NN,
\end{equation*}
for any $\varepsilon\in ( 0,\varepsilon^{*} ]$. Moreover, we can deduce that $S_{\varepsilon}(n+m) = S_{\varepsilon}(n)S_{\varepsilon}(m)$.

A compact set ${\mathcal{A}}_{\varepsilon}$ in $\ell^{2}$ is called a numerical attractor of the IES \eqref{glt} if ${\mathcal{A}}_{\varepsilon}$ satisfies the following two conditions:
\begin{description}
\item[1] The compact set ${\mathcal{A}}_{\varepsilon}$ is invariant, meaning that
\begin{equation*}
	S_{\varepsilon}(n){\mathcal{A}}_{\varepsilon} = {\mathcal{A}}_{\varepsilon},\ n\in\NN.
\end{equation*}
\item[2] The compact set ${\mathcal{A}}_{\varepsilon}$ is attracting, meaning that
\begin{equation*}
	\lim_{t \to \infty} d_{\ell^{2}} (S_{\varepsilon}(n), {\mathcal{A}}_{\varepsilon}) = 0,
\end{equation*}
\item where $d_{\ell^{2}}$ denotes the Hausdorff semi-distance, definedas $d(A,B) = \sup_{a\in A} \inf_{b\in B}\| a-b \|$.
\end{description}
\begin{theorem}\label{dingli3.2}
	For any $\varepsilon\in ( 0,\varepsilon^{*} ]$,  the IES \eqref{glt} for the Ginzburg-Landau equation has a unique numerical attractor, which can be expressed as follows:
	\begin{equation*}
		{\mathcal{A}}_{\varepsilon} = \bigcap_{m\in\NN} \overline{\bigcup_{n\geq m}S_{\varepsilon}(n)B_{r^{*}}}.
	\end{equation*}
\end{theorem}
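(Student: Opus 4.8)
The plan is to invoke the standard existence criterion for global attractors of discrete dynamical systems: once $S_\varepsilon$ admits a bounded positively invariant absorbing set and is asymptotically compact, the attractor exists, is unique, and coincides with the omega-limit set of the absorbing set, which gives exactly the stated formula $\mathcal{A}_\varepsilon=\bigcap_{m\in\NN}\overline{\bigcup_{n\geq m}S_\varepsilon(n)B_{r^*}}$. I would organize the argument into three steps: dissipativity, asymptotic compactness, and the abstract conclusion.

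First I would record the dissipativity already built into the scheme. Theorem~\ref{dingli2.1} gives $S_\varepsilon(n)B_{r^*}\subset B_{r^*}$, so $B_{r^*}$ is positively invariant; moreover the one-step estimate established in Step~1 of its proof,
\[
\|u_n^\varepsilon\|^2\leq\frac{1}{1+2(\gamma-4\lambda)\varepsilon}\bigl(\|u_{n-1}^\varepsilon\|^2+2\varepsilon c_1\bigr),
\]
iterated over $n$ and combined with $\gamma>4\lambda$, shows that every orbit starting in an arbitrary ball $B_r$ enters $B_{r^*}$ after finitely many steps. Hence $B_{r^*}$ is a bounded absorbing set for $S_\varepsilon$.

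The heart of the matter, and the main obstacle, is asymptotic compactness, because bounded sets in $\ell^2$ are not precompact. I would establish it through a discrete-time tail estimate, the implicit-Euler analogue of the computation carried out in Theorem~\ref{dingli3.1}. Pairing the scheme \eqref{glt} with $\xi_l u_n^\varepsilon$ and taking real parts, the same manipulations as in \eqref{diyixiang1}--\eqref{disixiang} --- splitting $D^+(\xi_l u)$, absorbing the $g$-term by Young's inequality, and discarding the dissipative nonlinearity --- produce a recursion of the form
\[
\sum_{j\in\ZZ}\xi_{l,j}\bigl|u_{n,j}^\varepsilon\bigr|^2\leq\frac{1}{1+2(\gamma-4\lambda)\varepsilon}\Bigl(\sum_{j\in\ZZ}\xi_{l,j}\bigl|u_{n-1,j}^\varepsilon\bigr|^2+C\varepsilon\bigl(\tfrac{c_2}{l}r^2+\textstyle\sum_{j\in\ZZ}\xi_{l,j}|g_j|^{\frac{p+2}{p+1}}\bigr)\Bigr).
\]
Iterating this geometric recursion and using $\gamma>4\lambda$ yields, uniformly for $y\in B_{r^*}$,
\[
\limsup_{n\to\infty}\sum_{|j|\geq 2l}\bigl|u_{n,j}^\varepsilon(y)\bigr|^2\leq\frac{C}{\gamma-4\lambda}\Bigl(\tfrac{c_2}{l}r^2+\sum_{|j|\geq l}|g_j|^{\frac{p+2}{p+1}}\Bigr)\xrightarrow{\;l\to\infty\;}0,
\]
the limit vanishing because $g\in\ell^2\subset\ell^{(p+2)/(p+1)}$. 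The delicate points are keeping the right-endpoint (implicit) evaluation so that the coefficient stays strictly below $1$, and bounding the cross term $(D^+u_n^\varepsilon,\,u_n^\varepsilon D^+\xi_l)$ by $\|D^+\xi_l\|_\infty\leq c_2/l$ exactly as in \eqref{diyixiang1}.

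Finally, the uniform smallness of the tails together with confinement in $B_{r^*}$ implies that any sequence $\{S_\varepsilon(n_k)y_k\}$ with $y_k\in B_{r^*}$ and $n_k\to\infty$ has an $\ell^2$-convergent subsequence: the finitely many low-index coordinates are precompact by Bolzano--Weierstrass, while the tails are uniformly negligible. This is precisely asymptotic compactness. Invoking the abstract attractor theorem used in \cite{Han-2020,Li-2023} then furnishes the unique numerical attractor with the claimed representation as the omega-limit set of $B_{r^*}$, completing the proof.
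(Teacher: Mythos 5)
Your proposal matches the paper's proof essentially step for step: the paper likewise reduces the theorem to the absorbing property of $B_{r^*}$ plus a discrete tail estimate, obtained by pairing \eqref{glt} with $\xi_l u_n^{\varepsilon}$, taking real parts, estimating term by term exactly as in \eqref{diyixiang1}--\eqref{disixiang}, and iterating the resulting geometric recursion with factor $\frac{1}{1+2\varepsilon(\gamma-4\lambda)}$, before concluding via the abstract attractor framework of \cite{Han-2020,Li-2023}. One cosmetic slip: the inclusion $\ell^2\subset\ell^{\frac{p+2}{p+1}}$ you invoke is backwards (since $\frac{p+2}{p+1}\leq 2$, the inclusion runs the other way); the vanishing of the $g$-tail instead follows from the standing assumption $g\in\ell^{\frac{p+2}{p+1}}$ that the paper makes implicitly through its definition of $c_1$.
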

\begin{proof}[Proof]
	Since $B_{r^{*}}$ is absorbing for the discrete dynamical system, and similarly to Theorem \ref{dingli3.1}, it is sufficient to prove the asymptotic tails property. That is, for any $M \textgreater 0$, there exist $N,I\in\NN$ such that the following inequality holds:
	\begin{equation*}
		\sum_{|j|\geq I} |u_{n,j}^{\varepsilon}|^{2} \leq M,\ \forall n\geq N,\ y\in B_{r^{*}},
	\end{equation*}
	$u^{\varepsilon}_{n}(y) = \left( u_{n,j}^{\varepsilon} \right)_{j\in\ZZ}$ is the unique solution of IES \eqref{glt}. For convenience in the proof, we use notation $u_{n}$ instead of $u^{\varepsilon}_{n}(y)$.
	
	Also we take the inner product of equation \eqref{glt} with $\xi_{l}u_{n}$, then take the real part:
	\begin{equation}\label{lisanweibuguji1}
		\begin{split}
		\sum_{j\in \mathbb{Z}}\xi_{l,j}|u_{n,j}|^2 &= \operatorname{Re}(u_{n-1},\xi_{l}u_{n}) + \varepsilon\operatorname{Re}\left(\left(\lambda + i\mu\right)\Lambda u_{n}, \xi_{l}u_{n}\right)\\
		&\quad- \varepsilon\gamma\sum_{j \in \mathbb{Z}}\xi_{l,j}\left|u_{n,j}\right|^{2} - \varepsilon k \sum_{j\in\ZZ}\xi_{l,j}|u_{n,j}|^{p+2} + \varepsilon\operatorname{Re}\left(g, \xi_{l}u_{n}\right)
		\end{split}
	\end{equation}
	The first term on the right-hand side of \eqref{lisanweibuguji1} is estimated as below:
	\begin{equation}\label{lisandiyixiang}
		\operatorname{Re}(u_{n-1,j},\xi_{l,j}u_{n,j}) \leq |(u_{n-1,j},\xi_{l,j}u_{n,j})|\leq \frac{1}{2}\sum_{j\in\ZZ}\xi_{l,j}u_{n-1,j} + \frac{1}{2}\sum_{j\in\ZZ}\xi_{l,j}u_{n,j}.
	\end{equation} 
	Using a similar method as in the derivation of \eqref{diyixiang1}-\eqref{disixiang}, we have
	\begin{align}
		&\varepsilon\operatorname{Re}\left(\lambda \Lambda u_{n}, \xi_{l}u_{n}\right) \leq 4\varepsilon\lambda \sum_{j\in\ZZ} \xi_{l,j}|u_{n,j}|^{2} + 4\varepsilon\lambda\frac{c_{2}}{l}\|u_{n}\|^{2},\label{lisandierxiang}\\ 
		&\varepsilon\operatorname{Re}\left(i\mu \Lambda u_{n}, \xi_{l}u_{n}\right) \leq 2\varepsilon\mu \frac{c_{2}}{l}\|u_{n}\|^{2},\label{lisandisanxiang}\\ 
		&\varepsilon\operatorname{Re}\left(g, \xi_{l}u_{n}\right) \leq \varepsilon k\sum_{j\in\ZZ} \xi_{l,j}|u_{n,j}|^{p+2} + \varepsilon\frac{p+1}{p+2}\left[ \frac{1}{k(p+2)} \right]^{\frac{1}{p+1}} \sum_{j\in\ZZ} \xi_{l,j}|g_{j}|^{\frac{p+2}{p+1}}\label{lisandisixiang}. 
	\end{align}
	Substituting \eqref{lisandiyixiang}-\eqref{lisandisixiang}, we obtain
	\begin{equation}\label{lisanweibuguji2}
		\begin{split}
			\sum_{j\in \mathbb{Z}}\xi_{l,j}|u_{n,j}|^2 & \leq \frac{1}{2}\sum_{j\in \mathbb{Z}}\xi_{l,j}|u_{n-1,j}|^2 + \left( \frac{1}{2}-\varepsilon(\gamma-4\lambda)  \right)\sum_{j\in \mathbb{Z}}\xi_{l,j}|u_{n,j}|^2 + \varepsilon\left[(4\lambda+2\mu)\frac{c_{2}}{l}\|u_{n}\|^{2}\right.\\
			&\left.\quad + \frac{p+1}{p+2}\left[ \frac{1}{k(p+2)} \right]^{\frac{1}{p+1}} \sum_{j\in\ZZ} \xi_{l,j}|g_{j}|^{\frac{p+2}{p+1}}\right]
		\end{split}
	\end{equation}
	When $l\to\infty$, the last term in \eqref{lisanweibuguji2} tends to zero, we can choose $l_{M} \textgreater 0$ such that
	\begin{equation*}
		(4\lambda+2\mu)\frac{c_{2}}{l}\|u_{n}\|^{2} + \frac{p+1}{p+2}\left[ \frac{1}{k(p+2)} \right]^{\frac{1}{p+1}} \sum_{j\in\ZZ} \xi_{l,j}|g_{j}|^{\frac{p+2}{p+1}} \textless \frac{M}{2}(\gamma-4\lambda), \ \forall l \geq l_{M}.
	\end{equation*}
	Thus \eqref{lisanweibuguji2} can be rewritten as:
	\begin{equation}\label{lisanweibuguji3}
		\sum_{j\in \mathbb{Z}}\xi_{l,j}|u_{n,j}|^2 \leq \frac{2\varepsilon(\gamma-4\lambda)}{1+2\varepsilon(\gamma-4\lambda)}\frac{M}{2} +\frac{1}{1+2\varepsilon(\gamma-4\lambda)}\sum_{j\in\ZZ}\xi_{l,j}|u_{n-1,j}|^{2}.
	\end{equation}
	As $n\to\infty$, for any $l\geq l_{M}$, we get
	\begin{equation*}
		\begin{split}
			\sum_{j\in \mathbb{Z}}\xi_{l,j}|u_{n,j}|^2 &\leq \frac{1}{\left[ 1+2\varepsilon(\gamma-4\lambda) \right]^{n}} \sum_{j\in\ZZ}\xi_{l,j}|u_{0,j}|^{2} + \frac{M}{2} \sum_{m=1}^{n}\frac{2\varepsilon(\gamma-4\lambda)}{\left[ 1+2\varepsilon(\gamma-4\lambda) \right]^{m}}\\
			& \textless \frac{(r^{*})^{2}}{\left[ 1+2\varepsilon(\gamma-4\lambda) \right]^{n}} + \frac{M}{2} \to \frac{M}{2}.
		\end{split}
	\end{equation*}
	Therefore we have
	\begin{equation*}
		\sum_{|j|\geq 2l}|u_{n,j}|^{2} \leq \sum_{j\in\ZZ}\xi_{l,j}|u_{n,j}|^{2} \textless M, \ \forall n\geq N, \ l\geq l_{M}.
	\end{equation*}
	Choosing $I = 2l_{M}$, the theorem is proved.
\end{proof}
\subsection{Approximation from numerical attractor to the global attractor}
In this section, we give a theorem regarding the upper semi-convergence of the numerical attractor to the global attractor, and the upper semi-continuity of the numerical attractor under the Hausdorff semi-distance.
	\begin{theorem}\label{dingli3.3}
		Let \(\mathcal{A}_\varepsilon (\varepsilon \in (0,\varepsilon^*])\) be the numerical attractor and \(\mathcal{A}\) the global attractor. Then:
	\begin{equation}\label{shangbanshoulian}
		\lim_{\varepsilon \to 0^+} d_{\ell^2}(\mathcal{A}_\varepsilon, \mathcal{A}) = 0
	\end{equation}
	Moreover, \(\mathcal{A}_\varepsilon\) is upper semicontinuous on \((0,\varepsilon^*]\):
	\begin{equation}\label{shangbanlianxu}
		\lim_{\varepsilon \to \varepsilon_0} d_{\ell^2}(\mathcal{A}_\varepsilon, \mathcal{A}_{\varepsilon_0}) = 0, \quad \forall \varepsilon_0 \in (0,\varepsilon^*]
	\end{equation}
	\end{theorem}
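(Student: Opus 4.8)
The plan is to treat both limits \eqref{shangbanshoulian} and \eqref{shangbanlianxu} by a single three-move template, exploiting that every attractor in sight lives inside the common absorbing ball $B_{r^*}$: by the positive invariance of Theorem \ref{dingli2.1} and Lemma \ref{yinli2.2} one has $\mathcal{A}_\varepsilon\subset B_{r^*}$ for all $\varepsilon\in(0,\varepsilon^*]$, and $\mathcal{A}\subset B_{r^*}$. The template is: given a point $a$ in the moving attractor $\mathcal{A}_\varepsilon$, use the invariance $S_\varepsilon(n)\mathcal{A}_\varepsilon=\mathcal{A}_\varepsilon$ to pull it back, writing $a=S_\varepsilon(n)b=u_n^\varepsilon(b)$ for a suitable number of steps $n$ and some $b\in\mathcal{A}_\varepsilon\subset B_{r^*}$; then compare $u_n^\varepsilon(b)$ with the corresponding point of the limiting system; and finally absorb the residual distance via the attraction property of the limiting attractor. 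Since $d_{\ell^2}(\cdot,\cdot)$ is a supremum over its first argument, it suffices to control $\mathrm{dist}(a,\cdot)$ uniformly in $a\in\mathcal{A}_\varepsilon$, equivalently in $b\in B_{r^*}$.

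For \eqref{shangbanshoulian}, fix $\eta>0$. Since $\mathcal{A}$ attracts the bounded set $B_{r^*}$ (Theorem \ref{dingli3.1}), I choose $T_0$ with $d_{\ell^2}(u(t,B_{r^*}),\mathcal{A})<\eta/2$ for all $t\geq T_0$. For $\varepsilon\in(0,\varepsilon^*]$ with $\varepsilon\le 1$ set $n=\lceil T_0/\varepsilon\rceil$, so that $t_n:=n\varepsilon\in[T_0,T_0+1)$. The global error estimate of Theorem \ref{dingli2.2}, applied on the horizon $T=T_0+1$, gives $\sup_{b\in B_{r^*}}\|u(t_n,b)-u_n^\varepsilon(b)\|\leq\tfrac{M_{r^*}}{2}e^{L_{r^*}(T_0+1)}\varepsilon$, which is $<\eta/2$ once $\varepsilon$ is small. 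For any $a\in\mathcal{A}_\varepsilon$, writing $a=u_n^\varepsilon(b)$ with $b\in\mathcal{A}_\varepsilon\subset B_{r^*}$,
\begin{equation*}
\mathrm{dist}(a,\mathcal{A})\leq\|u_n^\varepsilon(b)-u(t_n,b)\|+\mathrm{dist}(u(t_n,b),\mathcal{A})<\tfrac{\eta}{2}+\tfrac{\eta}{2}=\eta,
\end{equation*}
using $t_n\geq T_0$ in the second term. Taking the supremum over $a$ yields $d_{\ell^2}(\mathcal{A}_\varepsilon,\mathcal{A})\leq\eta$ for all small $\varepsilon$.

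For \eqref{shangbanlianxu}, fix $\varepsilon_0\in(0,\varepsilon^*]$. The new ingredient is a uniform continuity-in-$\varepsilon$ estimate for the implicit-Euler map at a fixed number of steps: I claim that for each fixed $N\in\NN$,
\begin{equation*}
\sup_{y\in B_{r^*}}\|u_N^\varepsilon(y)-u_N^{\varepsilon_0}(y)\|\longrightarrow 0\quad\text{as }\varepsilon\to\varepsilon_0.
\end{equation*}
This I would prove by induction on the step index. Subtracting $u_n^\varepsilon=u_{n-1}^\varepsilon+\varepsilon Fu_n^\varepsilon$ and $u_n^{\varepsilon_0}=u_{n-1}^{\varepsilon_0}+\varepsilon_0 Fu_n^{\varepsilon_0}$, splitting $\varepsilon Fu_n^\varepsilon-\varepsilon_0 Fu_n^{\varepsilon_0}=\varepsilon(Fu_n^\varepsilon-Fu_n^{\varepsilon_0})+(\varepsilon-\varepsilon_0)Fu_n^{\varepsilon_0}$, and using $\|Fu_n^{\varepsilon_0}\|\leq M_{r^*}$ with the Lipschitz bound $\|Fu_n^\varepsilon-Fu_n^{\varepsilon_0}\|\leq L_{r^*}\|u_n^\varepsilon-u_n^{\varepsilon_0}\|$ from Lemma \ref{yinli2.1} (all iterates lie in $B_{r^*}$), one gets $a_n\leq q(a_{n-1}+|\varepsilon-\varepsilon_0|M_{r^*})$ with $a_n:=\|u_n^\varepsilon-u_n^{\varepsilon_0}\|$ and $q:=(1-\varepsilon L_{r^*})^{-1}$. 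The step restriction $\varepsilon\leq\varepsilon^*\leq(1+L_{r^*+1})^{-1}$ forces $\varepsilon L_{r^*}<1$, so $q$ stays bounded uniformly near $\varepsilon_0$; since $a_0=0$, the discrete Gronwall recursion gives $a_N\leq|\varepsilon-\varepsilon_0|M_{r^*}\,q\,(q^N-1)/(q-1)\to0$, with all constants independent of $y\in B_{r^*}$.

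With the claim in hand, \eqref{shangbanlianxu} closes exactly as the first part, now with $\mathcal{A}_{\varepsilon_0}$ as the limiting object: by the attraction in Theorem \ref{dingli3.2} pick $N$ with $d_{\ell^2}(S_{\varepsilon_0}(N)B_{r^*},\mathcal{A}_{\varepsilon_0})<\eta/2$; by the claim pick $\delta>0$ so that $\sup_{y\in B_{r^*}}\|u_N^\varepsilon(y)-u_N^{\varepsilon_0}(y)\|<\eta/2$ whenever $|\varepsilon-\varepsilon_0|<\delta$; then for $a=u_N^\varepsilon(b)\in\mathcal{A}_\varepsilon$ with $b\in B_{r^*}$,
\begin{equation*}
\mathrm{dist}(a,\mathcal{A}_{\varepsilon_0})\leq\|u_N^\varepsilon(b)-u_N^{\varepsilon_0}(b)\|+\mathrm{dist}(S_{\varepsilon_0}(N)b,\mathcal{A}_{\varepsilon_0})<\eta,
\end{equation*}
so $d_{\ell^2}(\mathcal{A}_\varepsilon,\mathcal{A}_{\varepsilon_0})\leq\eta$. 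The genuinely new obstacle is the continuity-in-$\varepsilon$ claim: the implicitness couples $u_n^\varepsilon$ on both sides, so one cannot simply iterate an explicit map, and the argument hinges on absorbing the term $\varepsilon(Fu_n^\varepsilon-Fu_n^{\varepsilon_0})$ through $\varepsilon L_{r^*}<1$ — precisely what the step-size restriction \eqref{banjinheshijian} secures. Everything else (matching $t_n$ to the continuous horizon in the first part, and the three-move closing) is routine once the common absorbing ball and the error estimate of Theorem \ref{dingli2.2} are invoked.
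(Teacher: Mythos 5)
Your proposal is correct, and it is in fact more complete than what the paper provides: the paper's ``proof'' of Theorem \ref{dingli3.3} is a two-sentence sketch, deferring to the references \cite{Liu-2024a,Liu-2024b} and asserting that a \emph{contradiction argument} combined with the discretization error of Theorem \ref{dingli2.2} yields both limits. You use the same key ingredient for \eqref{shangbanshoulian} --- the global error bound $\frac{M_{r^*}}{2}e^{L_{r^*}T}\varepsilon$ on a horizon $T=T_0+1$ chosen after the attraction time $T_0$ of $\mathcal{A}$ --- but you run it as a direct three-step estimate (invariance pullback $a=S_\varepsilon(n)b$, discretization error, attraction of the limit attractor) rather than by contradiction; the two are logically equivalent here, and your direct version avoids any compactness extraction of convergent subsequences, which the contradiction route would implicitly need. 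The genuinely new content in your writeup is the continuity-in-$\varepsilon$ lemma for \eqref{shangbanlianxu}: Theorem \ref{dingli2.2} only compares the discrete scheme with the continuous flow, so it cannot by itself control $d_{\ell^2}(\mathcal{A}_\varepsilon,\mathcal{A}_{\varepsilon_0})$ for $\varepsilon_0>0$ fixed, and the paper never says how this gap is bridged. Your discrete Gronwall recursion $a_n\leq q\,(a_{n-1}+|\varepsilon-\varepsilon_0|M_{r^*})$, with the implicit coupling absorbed because $\varepsilon L_{r^*}\leq L_{r^*+1}/(1+L_{r^*+1})<1$ by the step-size restriction \eqref{banjinheshijian}, is exactly the right mechanism, and your observation that $q$ is bounded uniformly by $1+L_{r^*+1}$ makes the estimate uniform over $y\in B_{r^*}$ as required. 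Two cosmetic points: you should state explicitly that $\mathcal{A}_{\varepsilon}\subset B_{r^*}$ follows from the representation in Theorem \ref{dingli3.2} together with the positive invariance of $B_{r^*}$ (you assert it, correctly, but it deserves one line), and in the closing of \eqref{shangbanlianxu} the point $b$ should be taken in $\mathcal{A}_\varepsilon\subset B_{r^*}$, as in your first part, so that the invariance $S_\varepsilon(N)\mathcal{A}_\varepsilon=\mathcal{A}_\varepsilon$ applies.
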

	\begin{proof}[Proof]
		The proof of this theorem generally follows a standard approach. As in \cite{Liu-2024a} and \cite{Liu-2024b}, the proof utilizes discretization errors to establish approximation properties. Therefore, using a contradiction argument and the discretization error provided in Theorem \ref{dingli2.2}, the result can be similarly proved.
	\end{proof}
 
\section{Finite dimensional approximation of numerical attractor}
\subsection{Existence of truncated numerical attractor} 
We consider the finite dimensional approximation of the IES \eqref{glt}. For $m\in\NN$ we consider the system of implicit difference equations in $\CC^{2m+1}$ by truncating the infinite dimensional system as follows:
\begin{equation}\label{jieduangl1}
	\begin{cases}
	\ u_{n,-m}^{\varepsilon,m} &= \quad u_{n-1,-m}^{\varepsilon,m} + \varepsilon\left[ (\lambda+i\mu)\left( -u_{n,m}^{\varepsilon,m} +2u_{n,-m}^{\varepsilon,m}-u_{n,-m+1}^{\varepsilon,m} \right) -(\gamma+i\beta)u_{n,-m}^{\varepsilon,m} \right.\\
	& \left. \qquad- (k+i\nu)|u_{n,-m}^{\varepsilon,m}|^{p}u_{n,-m}^{\varepsilon,m} + g_{-m} \right],\\

	\ u_{n,-m+1}^{\varepsilon,m} &= \quad u_{n-1,-m+1}^{\varepsilon,m} + \varepsilon\left[ (\lambda+i\mu)\left( -u_{n,-m}^{\varepsilon,m} +2u_{n,-m+1}^{\varepsilon,m}-u_{n,-m+2}^{\varepsilon,m} \right) -(\gamma+i\beta)u_{n,-m+1}^{\varepsilon,m} \right.\\
	& \left. \qquad- (k+i\nu)|u_{n,-m+1}^{\varepsilon,m}|^{p}u_{n,-m+1}^{\varepsilon,m} + g_{-m+1} \right],\\	
	
	&\qquad \qquad \qquad \vdots\\
	
	\ u_{n,m-1}^{\varepsilon,m} &= \quad u_{n-1,m-1}^{\varepsilon,m} + \varepsilon\left[ (\lambda+i\mu)\left( -u_{n,m-2}^{\varepsilon,m} +2u_{n,m-1}^{\varepsilon,m}-u_{n,m}^{\varepsilon,m} \right) -(\gamma+i\beta)u_{n,m-1}^{\varepsilon,m} \right.\\
	& \left. \qquad- (k+i\nu)|u_{n,m-1}^{\varepsilon,m}|^{p}u_{n,m-1}^{\varepsilon,m} + g_{m-1} \right],\\
	
	\ u_{n,m}^{\varepsilon,m} &= \quad u_{n-1,m}^{\varepsilon,m} + \varepsilon\left[ (\lambda+i\mu)\left( -u_{n,m-1}^{\varepsilon,m} +2u_{n,m}^{\varepsilon,m}-u_{n,-m}^{\varepsilon,m} \right) -(\gamma+i\beta)u_{n,m}^{\varepsilon,m} \right.\\
	& \left. \qquad- (k+i\nu)|u_{n,m}^{\varepsilon,m}|^{p}u_{n,m}^{\varepsilon,m} + g_{m} \right].\\
	\end{cases}
\end{equation}
The initial condition is given by
\begin{equation*}
	u_{0}^{\varepsilon,m} = z \in \CC^{2m+1}.
\end{equation*} 
In \eqref{jieduangl1}, we use the boundary condition as in \cite{Han-2020}, where $u_{n,m}^{\varepsilon,m} = u_{n,-m-1}^{\varepsilon,m},\ u_{n,m+1}^{\varepsilon,m} = u_{n,-m}^{\varepsilon,m}$.

According to \cite{Liu-2024a}, we have
\begin{equation}\label{lambdam}
	\Lambda_{m} = 
	\begin{pmatrix}
		 2 & -1 & 0 & \cdots & 0 & 0 & -1\\
		-1 &  2 & -1 & \cdots & 0 & 0 & 0\\
		\vdots & \vdots & \vdots & \ddots & \vdots & \vdots & \vdots\\
		0 & 0 & 0 & \cdots & -1 & 2 & -1\\
		-1 & 0 & 0 & \cdots & 0 & -1 & 2\\
	\end{pmatrix}
	.
\end{equation}
Similarly, we can obtain $D_{m}^{+}$, then we have $(\Lambda_{m}u_{n}^{\varepsilon,m},u_{n}^{\varepsilon,m})=(D_{m}^{+}u_{n}^{\varepsilon,m},D_{m}^{+}u_{n}^{\varepsilon,m})$. Let $g^{m} = (g_{j})_{|j|\leq m} \in \CC^{2m+1}$, $u_{n}^{\varepsilon,m} = (u_{n,j}^{\varepsilon,m})_{|j|\leq m} \in \CC^{2m+1}$, then we can rewrite the truncated system\eqref{jieduangl1} as:
\begin{equation}\label{jieduangl2}
\begin{cases}
	&u_{n}^{\varepsilon,m} = u_{n-1}^{\varepsilon,m} + \varepsilon\left( (\lambda+i\mu)\Lambda_{m}u_{n}^{\varepsilon,m} - (\gamma+i\beta)u_{n}^{\varepsilon,m} - (k+i\nu)|u_{n}^{\varepsilon,m}|^{p}u_{n}^{\varepsilon,m} + g^{m} \right),\\
	&u_{0}^{\varepsilon,m} = z\in \CC^{2m+1}.
\end{cases}
\end{equation}
The truncated vector field is also expressed as:
\begin{equation*}
	F_{m}z = (\lambda+i\mu)\Lambda_{m}z - (\gamma+i\beta)z -(k+i\nu)|z|^{p}z + g^{m}, \ \forall z\in\CC^{2m+1}.
\end{equation*}
Let $B_{r^{*}}^{m}$ be the ball of radius $r\textgreater0$ in $\CC^{2m+1}$. According to Lemma \ref{yinli2.1}, for any $y,z \in B_{r^{*}}$, we have
\begin{align}
\begin{split}\label{jieduanyoujie}
	\| F_{m}y \| &\leq (4\lambda+4|\mu|+\gamma+|\beta|)r + (k+|\nu|)r^{p+1} +\|g^{m}\|  \\
	& \leq (4\lambda+4|\mu|+\gamma+|\beta|)r + (k+|\nu|)r^{p+1} +\|g\| =M_{r}, \\
\end{split}\\
\begin{split}\label{jieduanlianxu}
		 \| F_{m}y-F_{m}z \| \leq L_{r} \| y-z \|,
\end{split}
\end{align}
where $M_{r}$ and $L_{r}$ are difined in Lemma \ref{yinli2.1}. By using \eqref{jieduanyoujie} and \eqref{jieduanlianxu}, we can obtain the existence of both a solution and an attractor for truncated system \eqref{jieduangl2}..
\begin{theorem}\label{dingli4.1}
	For any $\varepsilon \in (0,\varepsilon^{*}]$ and $u_{0}^{\varepsilon,m} \in B_{r^{*}}^{m}$ with $m\in\NN$, the truncated system \eqref{jieduangl2} possesse a unique solution $u_{n}^{\varepsilon,m} \in 
	B_{r^{*}}^{m}$ for any $n\in\NN$. Furthermore,  the solution semigroup has a unique finite dimensional numerical attractor ${\mathcal{A}}_{m}^{\varepsilon}$(also referred to as the truncated numerical attractor), as described below:
	\begin{equation}\label{jieduanshuzhixiyinzi}
		{\mathcal{A}}_{m}^{\varepsilon} = \bigcap_{N=1}^{\infty}\overline{\bigcup_{n=N}^{\infty}u_{n}^{\varepsilon,m}(B_{r^{*}}^{m})}.
	\end{equation}
\end{theorem}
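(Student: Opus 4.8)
The plan is to mirror the proofs of Theorem \ref{dingli2.1} and Theorem \ref{dingli3.2}, with $F$ replaced by $F_m$ and $B_{r^*}$ by $B_{r^*}^m$, and to exploit the crucial simplification that the truncated state space $\CC^{2m+1}$ is finite dimensional. For the existence and uniqueness of the discrete solution I would reproduce the three-step scheme of Theorem \ref{dingli2.1}. In Step 1 (positive invariance), take the inner product of \eqref{jieduangl2} with $u_n^{\varepsilon,m}$ and pass to the real part; since $(\Lambda_m u,u)=\|D_m^+u\|^2\ge 0$ and $\|\Lambda_m\|\le 4$, the same Hölder–Young manipulation that produced \eqref{ss1} (absorbing the dissipative term $-\varepsilon k\|u_n^{\varepsilon,m}\|_{p+2}^{p+2}$ against the term generated by Young's inequality on the forcing) yields
\[
\|u_n^{\varepsilon,m}\|^2 \le \frac{1}{1+(2\gamma-8\lambda)\varepsilon}\bigl(\|u_{n-1}^{\varepsilon,m}\|^2 + 2\varepsilon c_1\bigr)\le \eta+\frac{c_1}{\gamma-4\lambda}=(r^*)^2,
\]
so $B_{r^*}^m$ is positively invariant. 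In Step 2 (contraction for $n=1$), define $\Phi_z^{\varepsilon}y := z+\varepsilon F_m y$; by \eqref{jieduanyoujie} and the choice of $\varepsilon^*$ in \eqref{banjinheshijian} it maps $B_{r^*+1}^m$ into itself, and by \eqref{jieduanlianxu} it is a contraction with constant $\tfrac{L_{r^*+1}}{1+L_{r^*+1}}<1$. Since $\CC^{2m+1}$ is a complete metric space, the Banach fixed point theorem gives a unique fixed point, which combined with Step 1 lies in $B_{r^*}^m$. Step 3 then iterates Step 2 with initial datum $u_{n-1}^{\varepsilon,m}\in B_{r^*}^m$ to obtain the unique solution for every $n\in\NN$.

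For the existence of the truncated numerical attractor the situation is genuinely easier than in Theorem \ref{dingli3.2}. The solution map defines a discrete semigroup on $B_{r^*}^m$ for which $B_{r^*}^m$ is a bounded absorbing set, by Step 1 above together with an argument paralleling Lemma \ref{yinli2.2}. Because $\CC^{2m+1}$ is finite dimensional, every bounded sequence has a convergent subsequence, so the semigroup is \emph{automatically} asymptotically compact and no asymptotic tails estimate is needed. The standard existence theorem for global attractors of dissipative discrete dynamical systems then yields a unique compact invariant attracting set equal to the omega-limit set of the absorbing ball, which is precisely the expression \eqref{jieduanshuzhixiyinzi}.

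The main technical point — and the only place where the implicit nature of the scheme demands care — is verifying the continuity of the time-one solution map $z\mapsto u_1^{\varepsilon,m}(z)$ that the attractor theorem requires. Unlike explicit schemes, this continuity is not immediate and must be extracted from the contraction estimate. Writing the two fixed-point relations $u=z_1+\varepsilon F_m u$ and $v=z_2+\varepsilon F_m v$ with $u,v\in B_{r^*}^m$, subtracting, and using \eqref{jieduanlianxu} gives $\|u-v\|\le\|z_1-z_2\|+\varepsilon L_{r^*}\|u-v\|$; since $\varepsilon L_{r^*}\le \tfrac{L_{r^*+1}}{1+L_{r^*+1}}<1$ by \eqref{banjinheshijian} and the monotonicity of $r\mapsto L_r$, this rearranges to
\[
\|u_1^{\varepsilon,m}(z_1)-u_1^{\varepsilon,m}(z_2)\|\le \frac{1}{1-\varepsilon L_{r^*}}\,\|z_1-z_2\|,
\]
establishing Lipschitz continuity of the solution operator. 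Once this continuity and the compact absorbing set $B_{r^*}^m$ are in hand, the attractor follows directly from the finite-dimensional dissipative dynamical systems theory, completing the proof.
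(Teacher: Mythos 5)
Your proposal is correct and follows essentially the same route as the paper: the paper likewise reuses the Theorem \ref{dingli2.1} machinery (positive invariance via the recursive estimate, contraction mapping with $F_m$ in place of $F$) and then invokes finite dimensionality of $\CC^{2m+1}$ to conclude that the absorbing ball $B_{r^*}^m$ is compact, so the attractor exists as the omega-limit set \eqref{jieduanshuzhixiyinzi} without any tail estimate. Your explicit verification of the Lipschitz continuity of the implicit solution map $z\mapsto u_1^{\varepsilon,m}(z)$ is a point the paper's terse proof passes over in silence, and it is a worthwhile addition since the abstract attractor theorem does require continuity of the semigroup.
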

\begin{proof}[Proof]
	Using the same method as in Theorem \ref{dingli2.1}, we have
	\begin{equation*}
		\|u_{n}^{\varepsilon,m}\|^2 \leq\frac{1}{1+(2\gamma-8\lambda)\varepsilon}(\|u_{n-1}^{\varepsilon,m}\|^2+2\varepsilon c_1).
	\end{equation*}
	According to Theorem \ref{dingli2.1}, the truncated system \eqref{jieduangl2} has a unique solution $u_{n}^{\varepsilon,m}(u_{0}^{\varepsilon,m})\in B_{r^{*}}^{m}$ for any $\varepsilon\in(0,\varepsilon^{*}], n\in\NN$ and $u_{0}^{\varepsilon,m}\in B_{r^{*}}^{\varepsilon,m}$. Furthermore, the solution semigroup on $B_{r^{*}}^{\varepsilon,m}$ is compact and absorbing in the finite dimensional 
	space. Thus,the solution semigroup possesses a unique attractor ${\mathcal{A}}_{m}^{\varepsilon}$ as defined in \eqref{jieduanshuzhixiyinzi}.
\end{proof}
\subsection{Convergence from the truncated numerical attractor to the numerical attractor}
In this subsection, we will prove the convergence of the truncated numerical attractor ${\mathcal{A}}_{m}^{\varepsilon}$ as $m\to\infty$. At first, we give the Lemma to show that the tail of any element in ${\mathcal{A}}_{m}^{\varepsilon}$ becomes uniformly small as $m\to\infty$.
\begin{lemma}\label{yinli4.1}
	Suppose $\varepsilon\in(0,\varepsilon^{*}]$. For any $\delta\textgreater0$, there exists $I_{\delta}\in\NN$ such that
	\begin{equation}
		\sum_{I_{\delta}\leq|j|\leq m}|z_{j}|^{2} \leq \delta, \ \forall z = (z_{j})_{|j|\leq m} \in {\mathcal{A}}_{m}^{\varepsilon},\ \forall m\geq I_{\delta}, m\in \NN.
	\end{equation}
\end{lemma}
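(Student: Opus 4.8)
The plan is to mimic the asymptotic-tail argument of Theorem \ref{dingli3.2}, now carried out for the finite-dimensional scheme \eqref{jieduangl2}, and to exploit the invariance of $\mathcal{A}_m^\varepsilon$ to remove any dependence on the (unknown) starting data. Since the solution semigroup satisfies $S_\varepsilon(n)\mathcal{A}_m^\varepsilon=\mathcal{A}_m^\varepsilon$, for every $z\in\mathcal{A}_m^\varepsilon$ and every $n\in\NN$ there is a point $z^{(n)}\in\mathcal{A}_m^\varepsilon\subset B_{r^*}^m$ with $u_n^{\varepsilon,m}(z^{(n)})=z$; in particular $\|z^{(n)}\|\le r^*$ for all $n$. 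Thus it suffices to bound the weighted tail of $u_n^{\varepsilon,m}$ started inside $B_{r^*}^m$, uniformly in the initial point, and then let $n\to\infty$.

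First I would reuse the cutoff sequence $\xi_l=(\xi(|j|/l))_{|j|\le m}$ from Theorem \ref{dingli3.1}, take the inner product of \eqref{jieduangl2} with $\xi_l u_n^{\varepsilon,m}$, and pass to the real part. The estimates \eqref{diyixiang1}--\eqref{disixiang}, reproduced with $\Lambda_m,D_m^+$ in place of $\Lambda,D^+$, yield exactly the one-step recursion \eqref{lisanweibuguji3}, namely
\begin{equation*}
	\sum_{|j|\le m}\xi_{l,j}|u_{n,j}^{\varepsilon,m}|^2 \le \frac{2\varepsilon(\gamma-4\lambda)}{1+2\varepsilon(\gamma-4\lambda)}\frac{\delta}{2} + \frac{1}{1+2\varepsilon(\gamma-4\lambda)}\sum_{|j|\le m}\xi_{l,j}|u_{n-1,j}^{\varepsilon,m}|^2,
\end{equation*}
valid once $l\ge l_\delta$ is large enough that $(4\lambda+2\mu)\frac{c_2}{l}(r^*)^2+\frac{p+1}{p+2}\left[\frac{1}{k(p+2)}\right]^{\frac{1}{p+1}}\sum_{|j|\ge l}|g_j|^{\frac{p+2}{p+1}}<\frac{\delta}{2}(\gamma-4\lambda)$. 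Iterating from $u_0^{\varepsilon,m}=z^{(n)}\in B_{r^*}^m$ and using $\sum_{|j|\le m}\xi_{l,j}|z_j^{(n)}|^2\le (r^*)^2$ gives
\begin{equation*}
	\sum_{|j|\le m}\xi_{l,j}|z_j|^2 \le \frac{(r^*)^2}{[1+2\varepsilon(\gamma-4\lambda)]^{n}} + \frac{\delta}{2},
\end{equation*}
and taking $n$ large forces the first term below $\delta/2$. Since $\xi_{l,j}=1$ for $|j|\ge 2l$, this produces $\sum_{2l\le|j|\le m}|z_j|^2<\delta$, so $I_\delta:=2l_\delta$ is the desired integer.

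The point demanding care, and the step I expect to be the genuine obstacle, is the \emph{uniformity in $m$}. The threshold $l_\delta$ must be chosen independently of the truncation size: this is why the forcing contribution is controlled by the full-lattice tail $\sum_{|j|\ge l}|g_j|^{\frac{p+2}{p+1}}$, which is finite and tends to $0$ as $l\to\infty$ (it is precisely the quantity entering $c_1$, so its summability is already assumed), and which dominates the truncated sum $\sum_{l\le|j|\le m}\xi_{l,j}|g_j|^{\frac{p+2}{p+1}}$ for every $m$. One must also check that the periodic boundary condition $u_{n,m+1}^{\varepsilon,m}=u_{n,-m}^{\varepsilon,m}$ does not spoil the discrete summation-by-parts identity $(\Lambda_m u,\xi_l u)=(D_m^+u,D_m^+(\xi_l u))$ nor the bound $\|D_m^+\xi_l\|_\infty\le c_2/l$; this holds because $\xi$ depends only on $|j|$, so the wrap-around difference $(D_m^+\xi_l)_m=\xi_{l,-m}-\xi_{l,m}=0$ vanishes, and for $m\ge 2l$ the cutoff equals $1$ near the junction. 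With $l_\delta$ and $n$ selected as above independently of $m$, the estimate holds simultaneously for all $m\ge I_\delta$, which is the claimed bound.
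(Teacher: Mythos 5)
Your proposal is correct and follows essentially the same route as the paper's proof: a cutoff-weighted energy estimate for the truncated system yielding the one-step recursion \eqref{jieduanlisanweibuguji3}, a choice of $l_{\delta}$ uniform in $m$ coming from the summable tail of $g$, iteration of that recursion, and the invariance of ${\mathcal{A}}_{m}^{\varepsilon}$ to represent each $z$ in the attractor as $u_{n}^{\varepsilon,m}(y)$ for some $y\in B_{r^{*}}^{m}$. The only difference is technical: the paper controls the circulant wrap-around terms of $\Lambda_{m}$ by a case-by-case Young inequality over $j=-m,\dots,m$ (leading to \eqref{jieduandiyixiangxin}), whereas you dispatch them via summation by parts together with the symmetry observation $(D_{m}^{+}\xi_{l})_{m}=\xi_{l,-m}-\xi_{l,m}=0$; both yield the same bound.
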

\begin{proof}[Proof]
	We difine $\xi_{l}^{m} = (\xi_{l,j})_{|j|\leq m}$ as the finite dimensional cut-off function. Taking the inner product of \eqref{jieduangl2} with ${\xi_{l}^{m}}{u_{n}^{\varepsilon,m}}$, then we take the real part:
		\begin{equation}\label{jieduanlisanweibuguji1}
			\begin{split}
			\sum_{|j|\leq m}\xi_{l,j}|u_{n,j}^{\varepsilon,m}|^2 &= \operatorname{Re}(u_{n-1}^{\varepsilon,m},\xi_{l}^{m}u_{n}^{\varepsilon,m}) + \varepsilon\operatorname{Re}\left(\left(\lambda + i\mu\right)\Lambda u_{n}^{\varepsilon,m}, \xi_{l}^{m}u_{n}^{\varepsilon,m}\right)\\
			&\quad- \varepsilon\gamma\sum_{|j|\leq m}\xi_{l,j}\left|u_{n,j}^{\varepsilon,m}\right|^{2} - \varepsilon k \sum_{|j|\leq m}\xi_{l,j}|u_{n,j}^{\varepsilon,m}|^{p+2} + \varepsilon\operatorname{Re}\left(g^{m}, \xi_{l}^{m}u_{n}^{\varepsilon,m}\right).
			\end{split}
		\end{equation}
	According to Theorem \ref{dingli4.1}, we know that $|u_{n,j}^{\varepsilon,m}|\leq\| u_{n,j}^{\varepsilon,m} \| \leq r^{*}$, and by Young's inequality, we have
	\begin{equation}\label{jieduandiyixiang}
	\begin{split}
		\operatorname{Re}\left[\lambda(\Lambda_{m}u_{n}^{\varepsilon,m}, \xi_{l}^{m}u_{n}^{\varepsilon,m})\right] &\leq \lambda \xi_{l,-m}\left(3|u_{n,-m}^{\varepsilon,m}|^{2} + \frac{1}{2}|u_{n,m}^{\varepsilon,m}|^{2} + \frac{1}{2}|u_{n,-m+1}^{\varepsilon,m}|^{2}\right) + \lambda \sum_{|j|\leq m-1} \xi_{l,j}\left(3|u_{n,j}^{\varepsilon,m}|^{2} \right.\\
		&\quad\left. + \frac{1}{2}|u_{n,j-1}^{\varepsilon,m}|^{2} + \frac{1}{2}|u_{n,j+1}^{\varepsilon,m}|^{2}\right) + \lambda\xi_{l,m}\left(3|u_{n,m}^{\varepsilon,m}|^{2} + \frac{1}{2}|u_{n,m-1}^{\varepsilon,m}|^{2} + \frac{1}{2}|u_{n,-m}^{\varepsilon,m}|^{2}\right).
	\end{split}
	\end{equation}
	When $j = -m$, the right-hand side of \eqref{jieduandiyixiang} is expressed as:
	\begin{equation*}
		\begin{split}
			&\quad3\lambda\xi_{l,-m}|u_{n,-m}^{\varepsilon,m}|^{2} + \frac{1}{2}\lambda\xi_{l,-m+1}|u_{n,-m}^{\varepsilon,m}|^{2} + \frac{1}{2}\lambda\xi_{l,m}|u_{n,-m}^{\varepsilon,m}|^{2} \\
			&= \frac{7}{2}\lambda\xi_{l,-m}|u_{n,-m}^{\varepsilon,m}|^{2} + \frac{1}{2}\lambda\xi_{l,-m}|u_{n,-m}^{\varepsilon,m}|^{2} + \frac{1}{2}\lambda\left(\xi_{l,-m+1}|u_{n,-m}^{\varepsilon,m}|^{2} - \xi_{l,-m}|u_{n,-m}^{\varepsilon,m}|^{2}\right)\\
			&\leq 4\lambda\xi_{l,-m}|u_{n,-m}^{\varepsilon,m}|^{2} + \lambda \frac{c_{2}}{2l}|u_{n,-m}^{\varepsilon,m}|^{2}. 
		\end{split}
	\end{equation*}
	When $j = -m+1$, the right-hand side of \eqref{jieduandiyixiang} is expressed as:
	\begin{equation*}
	\begin{split}
		&\quad3\lambda\xi_{l,-m+1}|u_{n,-m+1}^{\varepsilon,m}|^{2} + \frac{1}{2}\lambda\xi_{l,-m}|u_{n,-m-1}^{\varepsilon,m}|^{2} + \frac{1}{2}\lambda\xi_{l,-m+2}|u_{n,-m-1}^{\varepsilon,m}|^{2} \\
		& = (3+\frac{1}{2}+\frac{1}{2})\lambda\xi_{l,-m+1}|u_{n,-m+1}^{\varepsilon,m}|^{2} + \frac{1}{2}\lambda\left( \xi_{l,-m} - \xi_{l,-m+1}  \right)|u_{n,-m+1}^{\varepsilon,m}|^{2} + \frac{1}{2}\lambda\left( \xi_{l,-m+2} - \xi_{l,-m+1}  \right)|u_{n,-m+1}^{\varepsilon,m}|^{2}\\
		&\leq 4\lambda\xi_{l,-m+1}|u_{n,-m+1}^{\varepsilon,m}|^{2} + \lambda \frac{c_{2}}{l}|u_{n,-m+1}^{\varepsilon,m}|^{2}.
	\end{split}
	\end{equation*}
	When $j = -m+2,\dots,m+2$, the right-hand side of \eqref{jieduandiyixiang} is expressed as:
	\begin{equation*}
	\begin{split}
		&\quad3\lambda\xi_{l,j}|u_{n,j}^{\varepsilon,m}|^{2} + \frac{1}{2}\lambda\xi_{l,j-1}|u_{n,j}^{\varepsilon,m}|^{2} + \frac{1}{2}\lambda\xi_{l,j+1}|u_{n,j}^{\varepsilon,m}|^{2} \\
		& = (3+\frac{1}{2}+\frac{1}{2})\lambda\xi_{l,j}|u_{n,j}^{\varepsilon,m}|^{2} + \frac{1}{2}\lambda\left( \xi_{l,j-1} - \xi_{l,j}  \right)|u_{n,j}^{\varepsilon,m}|^{2} + \frac{1}{2}\lambda\left( \xi_{l,j+1} - \xi_{l,j}  \right)|u_{n,j}^{\varepsilon,m}|^{2}\\
		&\leq 4\lambda\xi_{l,j}|u_{n,j}^{\varepsilon,m}|^{2} +  \lambda \frac{c_{2}}{l}|u_{n,j}^{\varepsilon,m}|^{2}.
	\end{split}
	\end{equation*}
	When $j = m-1$, the right-hand side of \eqref{jieduandiyixiang} is expressed as:
	\begin{equation*}
	\begin{split}
		&\quad3\lambda\xi_{l,m-1}|u_{n,m-1}^{\varepsilon,m}|^{2} + \frac{1}{2}\lambda\xi_{l,m-2}|u_{n,m-1}^{\varepsilon,m}|^{2} + \frac{1}{2}\lambda\xi_{l,m}|u_{n,m-1}^{\varepsilon,m}|^{2} \\
		& = (3+\frac{1}{2}+\frac{1}{2})\lambda\xi_{l,m-1}|u_{n,m-1}^{\varepsilon,m}|^{2} + \frac{1}{2}\lambda\left( \xi_{l,m-2} - \xi_{l,m-1}  \right)|u_{n,m-1}^{\varepsilon,m}|^{2} + \frac{1}{2}\lambda\left( \xi_{l,m} - \xi_{l,m-1}  \right)|u_{n,m-1}^{\varepsilon,m}|^{2}\\
		&\leq 4\lambda\xi_{l,m-1}|u_{n,m-1}^{\varepsilon,m}|^{2} + \lambda\frac{c_{2}}{l}|u_{n,m-1}^{\varepsilon,m}|^{2}.
	\end{split}
	\end{equation*}	
	When $j = m$, the right-hand side of \eqref{jieduandiyixiang} is expressed as:
	\begin{equation*}
		\begin{split}
			&\quad3\lambda\xi_{l,m}|u_{n,m}^{\varepsilon,m}|^{2} + \frac{1}{2}\lambda\xi_{l,-m}|u_{n,m}^{\varepsilon,m}|^{2} + \frac{1}{2}\lambda\xi_{l,m-1}|u_{n,m}^{\varepsilon,m}|^{2} \\
			&= \frac{7}{2}\lambda\xi_{l,m}|u_{n,m}^{\varepsilon,m}|^{2} + \frac{1}{2}\lambda\xi_{l,m}|u_{n,m}^{\varepsilon,m}|^{2} + \frac{1}{2}\lambda\left(\xi_{l,m-1}|u_{n,m}^{\varepsilon,m}|^{2} - \xi_{l,m}|u_{n,m}^{\varepsilon,m}|^{2}\right)\\
			&\leq 4\lambda\xi_{l,m}|u_{n,m}^{\varepsilon,m}|^{2} + \lambda \frac{c_{2}}{2l}|u_{n,m}^{\varepsilon,m}|^{2}. 
		\end{split}
	\end{equation*}
	Therefore, we rewrtie the \eqref{jieduandiyixiang} as follows:
	\begin{equation}\label{jieduandiyixiangxin}
		\operatorname{Re}\left[\lambda(\Lambda_{m}u_{n}^{\varepsilon,m}, \xi_{l}^{m}u_{n}^{\varepsilon,m})\right] \leq 4\lambda\sum_{|j|\leq m}\xi_{l,j}|u_{n,j}^{\varepsilon,m}|^{2} + \lambda \frac{c_{2}}{l}(r^{*})^{2}.
	\end{equation}
	Using a same method from \eqref{diyixiang2}, we have
	\begin{equation}\label{jieduandierxiang}
	\begin{split}
			&\quad\operatorname{Re}\left[  i\mu(\Lambda_{m}u_{n}^{\varepsilon,m}, \xi_{l}^{m}u_{n}^{\varepsilon,m})  \right] \\
			&= 	\operatorname{Re}\left[ i\mu \sum_{|j|\leq m}\xi_{l,j}|D^{+}_{m}u_{j}^{\varepsilon,m}|^{2} + i\mu(D^{+}_{m}u_{n}^{\varepsilon,m},u_{n}^{\varepsilon,m}D^{+}_{m}\xi_{l}^{m})    \right]\\
			& = \operatorname{Re}\left[ i\mu(D^{+}u,uD^{+}\xi_{l})  \right]
			 \leq \mu \| D^{+}_{m}u^{\varepsilon,m}_{n} \| \| u_{n}^{\varepsilon,m} \| \| D^{+}_{m}\xi_{l}^{m} \|_{\infty}
			 \leq 2|\mu| \frac{c_{2}}{l}\| u_{n}^{\varepsilon,m} \|^{2} 
			\leq 2|\mu|\frac{c_{2}}{l}(r^{*})^{2} .
	\end{split}
	\end{equation}
	Besides, we have
	\begin{equation}\label{jieduandisanxiang}
		\operatorname{Re}\left[ (u_{n-1}^{\varepsilon,m},\xi_{l}^{m}u_{n}^{\varepsilon,m}) \right] \leq \frac{1}{2} \sum_{|j|\leq m}\xi_{l,j}|u_{n,j}^{\varepsilon,m}|^{2} + \frac{1}{2} \sum_{|j|\leq m}\xi_{l,j}|u_{n-1,j}^{\varepsilon,m}|^{2}
	\end{equation}
	Submiting \eqref{jieduandiyixiangxin}-\eqref{jieduandisanxiang} to the \eqref{jieduanlisanweibuguji1}, similarly with Theorem \ref{dingli3.2} we obtain
	\begin{equation}\label{jieduanlisanweibuguji2}
	\begin{split}
		\sum_{|j|\leq m}\xi_{l,j}|u_{n,j}^{\varepsilon,m}|^2 &\leq \left[ \frac{1}{2}+\varepsilon(4\lambda-\gamma )  \right] \sum_{|j|\leq m}\xi_{l,j}|u_{n,j}^{\varepsilon,m}|^{2} + \frac{1}{2} \sum_{|j|\leq m}\xi_{l,j}|u_{n-1,j}^{\varepsilon,m}|^{2} + \varepsilon\left[ (\lambda+2|\mu|)\frac{c_{2}}{l}(r^{*})^{2} \right.\\
		&\quad \left. + \frac{p+1}{p+2}\left[ \frac{1}{k(p+2)} \right]^{\frac{1}{p+1}} \sum_{|j|\leq m} \xi_{l,j}|g_{j}|^{\frac{p+2}{p+1}}  \right].
	\end{split}
	\end{equation}
	The last term of the \eqref{jieduanlisanweibuguji2} tends to zero as $l \to \infty$, thus we choose $l_{\delta}$, $\forall l\geq l_{\delta}$, we have
	\begin{equation*}
		 (\lambda+2|\mu|)\frac{c_{2}}{l}(r^{*})^{2}  + \frac{p+1}{p+2}\left[ \frac{1}{k(p+2)} \right]^{\frac{1}{p+1}} \sum_{|j|\leq m} \xi_{l,j}|g_{j}|^{\frac{p+2}{p+1}}   \leq \frac{\delta}{2} (\gamma-4\lambda).
	\end{equation*}
	We can rewrite the \eqref{jieduanlisanweibuguji2}
	\begin{equation}\label{jieduanlisanweibuguji3}
		\sum_{|j| \leq m}\xi_{l,j}|u_{n,j}^{\varepsilon,m}|^2 \leq \frac{2\varepsilon(\gamma-4\lambda)}{1+2\varepsilon(\gamma-4\lambda)}\frac{\delta}{2} +\frac{1}{1+2\varepsilon(\gamma-4\lambda)}\sum_{|j|\leq m}\xi_{l,j}|u_{n-1,j}^{\varepsilon,m}|^{2}.
	\end{equation}
	Hence, there exists $N_{\delta}$, and we obtain the following:
	\begin{equation*}
		\sum_{|j|\leq m}\xi_{l,j}|u_{n,j}^{\varepsilon,m}|^{2} \leq \delta, \ \forall l \geq l_{\delta} \ and \ n \geq N_{\delta}.
	\end{equation*}
	Specially, for $I_{\delta} = 2l_{\delta}$, we get
	\begin{equation*}
		\sum_{I_{\delta}\leq |j| \leq m} | u_{N_{\delta},j}^{\varepsilon,m}(y) |^{2} \textless \delta, \ \forall y\in B_{r^{*}}^{m}.
	\end{equation*}
	For any $\varepsilon\in(0,\varepsilon^{*}]$ and $m\in\NN$, suppose $z\in{\mathcal{A}}^{\varepsilon}_{m} \subset B_{r^{*}}^{m} $, we obtain $z = u_{N_{\delta}}^{\varepsilon,m}(y)$. Therefore,
	\begin{equation*}
		\sum_{I_{\delta}\leq |j| \leq m}|z_{j}|^{2} = \sum_{I_{\delta}\leq |j| \leq m} |u_{N_{\delta}}^{\varepsilon,m}(y)|^{2} \textless \delta.
	\end{equation*}
	The lemma is proven.
\end{proof}
The next lemma follows from \cite{Temam-1997}.
\begin{lemma}\label{yinli4.2}
	$z\in {\mathcal{A}}^{\varepsilon}_{m}$ if and only if there exists a bounded solution $u_{n}^{m}$ of the truncated system \eqref{jieduangl2} such that $u_{0}^{m}=z$, while $y\in{\mathcal{A}}^{\varepsilon}$  if and only if there exists a bounded solution $u_{n}$ of the IES \eqref{glt} in $\ell^{2}$ such that $u_{0} = y$.
\end{lemma}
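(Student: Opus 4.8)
The plan is to treat both equivalences as instances of the standard characterization of a global attractor as the union of all complete bounded trajectories, exactly as in \cite{Temam-1997}. Both the truncated solution map on $B_{r^*}^{m}$ (Theorem \ref{dingli4.1}) and the solution semigroup $S_{\varepsilon}(n)$ on $B_{r^*}$ (Theorem \ref{dingli3.2}) generate discrete semigroups possessing compact, invariant, absorbing attractors, so the two statements can be proved by one and the same argument; I would write it out for $\mathcal{A}^{\varepsilon}$ and $y$, the truncated case being verbatim identical with $\ell^{2}$ replaced by $\CC^{2m+1}$ and $B_{r^*}$ by $B_{r^*}^{m}$. Here a \emph{bounded solution} is understood as a complete (two-sided) orbit $(u_{n})_{n\in\ZZ}$ satisfying \eqref{glt} for every $n\in\ZZ$ and bounded uniformly in $n$; this is the correct notion, since dissipativity makes every forward orbit bounded and the forward-only reading would trivialize the equivalence.

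For sufficiency ($\Leftarrow$), suppose $(u_{n})_{n\in\ZZ}$ is a complete bounded solution of \eqref{glt} with $u_{0}=y$. First I would invoke the semigroup property $S_{\varepsilon}(n+m)=S_{\varepsilon}(n)S_{\varepsilon}(m)$ to write $y=u_{0}=S_{\varepsilon}(n)u_{-n}$ for every $n\in\NN$. Since the set $\{u_{-n}\}_{n\ge 0}$ is bounded, it is absorbed into $B_{r^*}$ after finitely many steps, so the attracting property of $\mathcal{A}^{\varepsilon}$ yields $d_{\ell^{2}}(S_{\varepsilon}(n)u_{-n},\mathcal{A}^{\varepsilon})\to 0$. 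But $S_{\varepsilon}(n)u_{-n}=y$ for all $n$, hence $d_{\ell^{2}}(y,\mathcal{A}^{\varepsilon})=0$, and closedness of $\mathcal{A}^{\varepsilon}$ forces $y\in\mathcal{A}^{\varepsilon}$.

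For necessity ($\Rightarrow$), suppose $y\in\mathcal{A}^{\varepsilon}$. The forward orbit $u_{n}:=S_{\varepsilon}(n)y$, $n\ge 0$, remains in $\mathcal{A}^{\varepsilon}$ by invariance. To build the backward branch I would use that invariance makes the time-one map $S_{\varepsilon}(1)$ onto $\mathcal{A}^{\varepsilon}$: choose $u_{-1}\in\mathcal{A}^{\varepsilon}$ with $S_{\varepsilon}(1)u_{-1}=y$, and inductively pick $u_{-n}\in\mathcal{A}^{\varepsilon}$ with $S_{\varepsilon}(1)u_{-n}=u_{-n+1}$. Concatenating the two branches gives a complete solution $(u_{n})_{n\in\ZZ}$ contained in $\mathcal{A}^{\varepsilon}$ with $u_{0}=y$; it is bounded because $\mathcal{A}^{\varepsilon}$ is compact, which is exactly what is required.

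The only delicate point, and the step I would justify most carefully, is the backward construction in the necessity direction: it rests on the surjectivity of the time-one map restricted to the attractor, which is precisely the invariance $S_{\varepsilon}(n)\mathcal{A}^{\varepsilon}=\mathcal{A}^{\varepsilon}$ established in Theorems \ref{dingli3.2} and \ref{dingli4.1}. Everything else is routine once compactness, invariance, and attraction are in hand, and the finite-dimensional claim for $\mathcal{A}^{\varepsilon}_{m}$ follows by repeating the argument word for word in $\CC^{2m+1}$, using Theorem \ref{dingli4.1} in place of Theorem \ref{dingli3.2}.
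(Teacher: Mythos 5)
Your proof is correct. The paper itself offers no argument for this lemma --- it simply remarks that the statement ``follows from \cite{Temam-1997}'' --- and what you have written out is exactly the standard characterization of the global attractor by complete bounded orbits that this citation refers to: the correct two-sided reading of ``bounded solution,'' the attraction-plus-closedness argument for sufficiency, and the surjectivity of $S_{\varepsilon}(1)$ on the attractor (from invariance) to build the backward branch for necessity, applied verbatim in both $\ell^{2}$ and $\CC^{2m+1}$.
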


To prove the convergence result of ${\mathcal{A}}_{m}^{\varepsilon}$ under the Hausdorff semi-distance, observe that any element in $\CC^{2m+1}$ can be naturally expanded to an element in $\ell^{2}$. The null-expansion of a point $z\in\CC^{2m+1}$ can be defined as follows:
\begin{equation*}
	\widetilde{z}_{j} = 0,\ \forall |j| \textgreater m;\, \  \widetilde{z}_{j} = z_{j},\ \forall|j|\leq m.
\end{equation*}
From this viewpoint, the truncated attractor ${\mathcal{A}}^{\varepsilon}_{m}$ naturally extends to a null-expansion $\widetilde{{ \mathcal{A}}^{\varepsilon}_{m}} \subset \ell^{2}$.
\begin{theorem}\label{dingli4.2}
	Suppose $\varepsilon\in(0,\varepsilon^{*}]$. As $m\to\infty$, the truncated attractor ${\mathcal{A}}^{\varepsilon}_{m}$ in \eqref{jieduanshuzhixiyinzi} upper semi-converges to the numerical attractor ${\mathcal{A}}^{\varepsilon}$ of the IES \eqref{glt} under the Hausdorff semi-distance, i.e., 
	\begin{equation}\label{xiyinzibijin1}
		\lim_{m\to\infty}dist_{\ell^{2}}({\mathcal{A}}^{\varepsilon}_{m},{\mathcal{A}}^{\varepsilon}) = \lim_{m\to\infty}dist_{\ell^{2}}(\widetilde{{ \mathcal{A}}^{\varepsilon}_{m}},{\mathcal{A}}^{\varepsilon}) = 0.
	\end{equation}
\end{theorem}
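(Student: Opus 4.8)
The plan is to argue by contradiction, combining the dynamical characterization of both attractors through bounded complete orbits (Lemma \ref{yinli4.2}) with the uniform tail estimate of Lemma \ref{yinli4.1}, which is the device that converts weak $\ell^2$-compactness into strong convergence. Suppose \eqref{xiyinzibijin1} fails. Then there exist $\delta_0 > 0$, a sequence $m_k \to \infty$, and points $z^{m_k} \in \mathcal{A}^{\varepsilon}_{m_k}$ whose null-expansions satisfy $d_{\ell^2}(\widetilde{z^{m_k}}, \mathcal{A}^{\varepsilon}) \geq \delta_0$. Since $\mathcal{A}^{\varepsilon}_{m_k} \subset B^{m_k}_{r^*}$, all the $\widetilde{z^{m_k}}$ lie in the $\ell^2$-ball of radius $r^*$, so after passing to a subsequence they converge weakly to some $z^* \in B_{r^*}$.

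First I would upgrade this to strong convergence in $\ell^2$. On any fixed finite block $\{|j| \leq I\}$ weak convergence is strong, while Lemma \ref{yinli4.1} makes the tails $\sum_{I \leq |j| \leq m_k} |z^{m_k}_j|^2$ smaller than any prescribed $\eta$ once $I$ and $m_k$ are large; the weak limit inherits the same tail bound. Splitting $\|\widetilde{z^{m_k}} - z^*\|^2$ into its finite block and its tail then gives $\widetilde{z^{m_k}} \to z^*$ strongly in $\ell^2$.

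Next I would show $z^* \in \mathcal{A}^{\varepsilon}$, which contradicts the lower bound $\delta_0$. By Lemma \ref{yinli4.2}, each $z^{m_k}$ is the value at time $0$ of a bounded complete orbit $\{u_n^{m_k}\}_{n \in \ZZ}$ of the truncated scheme \eqref{jieduangl2} lying entirely in $\mathcal{A}^{\varepsilon}_{m_k}$. These orbits are uniformly bounded by $r^*$ and, again by Lemma \ref{yinli4.1}, have tails that are small uniformly in $n$. A diagonal extraction over $n \in \ZZ$, together with the same weak-plus-tail argument at each time level, produces a further subsequence along which $\widetilde{u}_n^{m_k} \to u_n^*$ strongly in $\ell^2$ for every $n \in \ZZ$, with $u_0^* = z^*$ and $\sup_n \|u_n^*\| \leq r^*$.

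Finally I would pass to the limit in the truncated scheme to verify that $\{u_n^*\}_{n \in \ZZ}$ is a bounded complete solution of the full IES \eqref{glt} in $\ell^2$; Lemma \ref{yinli4.2} then forces $z^* = u_0^* \in \mathcal{A}^{\varepsilon}$, the desired contradiction. Strong $\ell^2$ convergence lets the interior terms converge, $(\lambda+i\mu)\Lambda_m u_n^{m_k} \to (\lambda+i\mu)\Lambda u_n^*$ and $-(\gamma+i\beta)u_n^{m_k} \to -(\gamma+i\beta)u_n^*$, while the Lipschitz control in Lemma \ref{yinli2.1} (see \eqref{jieduanlianxu}) handles the nonlinearity $|u|^p u$. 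The main obstacle will be the boundary coupling created by the periodic truncation of $\Lambda_m$, namely the matrix entries linking $u_{n,m}$ with $u_{n,-m}$: these spurious terms are supported on the edge indices $j = \pm m$, and I expect to eliminate them precisely by invoking Lemma \ref{yinli4.1}, which forces $|u_{n,\pm m_k}^{m_k}| \to 0$ as $k \to \infty$. Controlling these boundary artefacts \emph{uniformly in $n$} so that the limit equation holds at every time level is the delicate point; the remaining passages to the limit are routine.
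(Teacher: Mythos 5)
Your proposal is correct and follows essentially the same route as the paper's proof: a contradiction argument combining the complete-orbit characterization of Lemma \ref{yinli4.2}, the uniform tail estimate of Lemma \ref{yinli4.1} to turn boundedness into relative compactness in $\ell^{2}$, a diagonal extraction over $n\in\ZZ$, and a passage to the limit in the truncated scheme to identify the limit as a bounded solution of \eqref{glt}. The one point you flag as delicate --- the periodic boundary coupling between $u_{n,m}$ and $u_{n,-m}$ --- is in fact harmless in the paper's version, since the limit equation is verified componentwise at each fixed $j$ and fixed $n$, where for $m_{l}>|j|+1$ the wraparound entries of $\Lambda_{m_{l}}$ never enter, so no uniformity in $n$ is required for that step.
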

\begin{proof}[Proof]
	We proof this theorem by contradiction. Assuming that \eqref{xiyinzibijin1} is false, there exists a constant $M_{0}\textgreater0$, a subsequence $m_{l}$ and $z_{m_{l}}\in{\mathcal{A}}_{m_{l}}^{\varepsilon}$ such that
	\begin{equation}\label{xiyinzibijin2}
		{\rm d}(z^{m_{l}},{\mathcal{A}}^{\varepsilon}) = {\rm d}(\widetilde{z^{m_{l}}},{\mathcal{A}}^{\varepsilon}) \geq M_{0}, \ \forall l\in\NN.
	\end{equation}
	Since $z^{m_l} \in \mathscr{A}_{m_l}^\varepsilon$, the unique solution $u_n^{\varepsilon,m_l} = u_n^{\varepsilon,m_l}(z^{m_l})$ of the truncated system \eqref{jieduangl2} exists by Lemma \ref{dingli4.2}, and it belongs to $\mathscr{A}_{m_l}^\varepsilon$. The null-expansion of $u_n^{\varepsilon,m_l}$ is denoted by $\widetilde{u_n^{\varepsilon,m_l}}$. By Lemma \ref{dingli4.1}, for any $\delta \textgreater 0$, there exists $I_\delta \in \mathbb{N}$ such that
	\[
	\sum_{|j| \geq I(\delta)} \left| \widetilde{u_{n,j}^{\varepsilon,m_l}} \right|^2 = \sum_{I(\delta) \leq |j| \leq m_l} \left| u_{n,j}^{\varepsilon,m_l} \right|^2 < \delta, \quad \forall n \in \mathbb{Z}, \, l \in \mathbb{N}.
	\]
	
	According to Theorem \ref{dingli4.1}, the attractor is contained within a ball of radius $r^*$. Therefore, the sequence $\left( u_{n,j}^{\varepsilon,m_l} \right)_{|j| < I_\delta}$ is bounded in $\mathbb{R}^{2I_\delta - 1}$, and the null-expansion $\widetilde{u_n^{\varepsilon,m_l}}$ is relatively compact in $\ell^2$. There exists a subsequence of $\{l\}$ (which we still denote by $l$) and $u_n^* \in \ell^2$ such that
	\[
	\lim_{l \to \infty} \left\| \widetilde{u_n^{\varepsilon,m_l}} - u_n^* \right\| = 0, \quad \forall n \in \mathbb{Z}. \tag{4.14}
	\]
	
	Next, we will prove that $u_n^*$ is the solution of the IES \eqref{glt}. Since $u_n^{\varepsilon,m_l}$ is the solution of the truncated system \eqref{jieduangl2}, for any $n \in \mathbb{Z}$ and $l \in \mathbb{N}$, we have
	\begin{equation*}
		u_{n}^{\varepsilon,m_{l}} = u_{n-1}^{\varepsilon,m_{l}} + \varepsilon\left( (\lambda+i\mu)\Lambda_{m_{l}}u_{n}^{\varepsilon,m_{l}} - (\gamma+i\beta)u_{n}^{\varepsilon,m_{l}} - (k+i\nu)|u_{n}^{\varepsilon,m_{l}}|^{p}u_{n}^{\varepsilon,m_{l}} + g^{m_{l}} \right)
	\end{equation*}
	According to the definition of the $m_{l}$-truncation operator, for each component $j\in\ZZ$ as $m_{l}\to\infty$, we have
	\begin{equation*}
		(\Lambda_{m_{l}}u_{n}^{\varepsilon,m_{l}})_{j} = (\Lambda\widetilde{u_n^{\varepsilon,m_l}})_{j},\ u_{n,j}^{\varepsilon,m_{l}} = \widetilde{u}_{n,j}^{\varepsilon,m_{l}},\ (g^{m_{l}})_{j} = g_{j},\ \forall n\in\ZZ,
	\end{equation*}
	here $\widetilde{u}_{n,j}^{\varepsilon,m_{l}}$ represents the $j$-component of the null-expansion $\widetilde{u_{n}^{\varepsilon,m_{l}}}$. As a result, it satisfies
	\begin{equation}\label{jieduankuochong}
		\widetilde{u}_{n,j}^{\varepsilon,m_{l}} = \widetilde{u}_{n-1,j}^{\varepsilon,m_{l}} + \varepsilon\left( (\lambda+i\mu)(\Lambda\widetilde{u_{n}^{\varepsilon,m_{l}}})_{j} - (\gamma+i\beta)\widetilde{u}_{n,j}^{\varepsilon,m_{l}} - (k+i\nu)|\widetilde{u}_{n,j}^{\varepsilon,m_{l}}|^{p}\widetilde{u}_{n,j}^{\varepsilon,m_{l}} + g_{j} \right)
	\end{equation}
	As $l\to\infty$(with $m_{l}\to\infty$ as well), from \eqref{jieduankuochong} we have
	\begin{equation*}
		u_{n,j}^{*} = u_{n-1,j}^{*} + \varepsilon\left( (\lambda+i\mu)\Lambda u_{n,j}^{*} - (\gamma+i\beta)u_{n,j}^{*} - (k+i\nu)|u_{n,j}^{*}|^{p}u_{n,j}^{*} + g_{j} \right)\  \forall j,z\in\ZZ
	\end{equation*}
	This indicates that $u_n^*$ is the solution of the IES \eqref{glt}. According to Theorem \ref{dingli4.1}, it follows that $\widetilde{u_n^{\varepsilon,m_l}} \in B_{r^*}$. From \eqref{jieduankuochong}, we get
	\[
	\|u_n^*\| = \lim_{l \to \infty} \|\widetilde{u_n^{\varepsilon,m_l}}\| \leq r^*, \quad \forall n \in \mathbb{Z},
	\]
	which implies that the solution $u_n^*$ of the IES \eqref{jieduankuochong} is bounded. According to Lemma \ref{yinli4.2}, It follows that $u_0^* \in \mathscr{A}^\varepsilon$ and
	\[
	\lim_{l \to \infty} \widetilde{z^{m_{l}}} = \lim_{l \to \infty} \widetilde{u_0^{\varepsilon,m_l}(z^{m_{l}})} = u_0^*.
	\]
	This contradicts the assumption in \eqref{xiyinzibijin2}, thus proving the theorem.
\end{proof}
\subsection{Bounds and continuity properties of numerical attractors}
In this part, We will derive the bounds and continuity properties of the numerical attractor $\mathscr{A}^\varepsilon$ and the truncated numerical attractor $\mathscr{A}_m^\varepsilon$ with under the Hausdorff semi-distance. Let
\[
\rho(A,B) = \max\{d(A,B), d(B,A)\}, \quad \|A\| = \rho(A,\{0\}), \quad \forall A, \, B \subset \ell^2 \text{ or } \mathbb{R}^{2m+1}.
\]

The two attractors will be denoted as $\mathscr{A}^\varepsilon(g, \gamma, \lambda)$ and $\mathscr{A}_m^\varepsilon(g, \gamma, \lambda)$, which depend on the external force $g \in \ell^2$ and the damping constant $\gamma-4\lambda \textgreater 0$.
\begin{theorem}\label{dingli4.3}
(i) For any $\varepsilon \in (0, \varepsilon^*]$ and $m \in \mathbb{N}$, the following upper bounds hold for the two attractors:
\begin{equation}\label{yitiao}
\begin{split}
	&\|\mathscr{A}^\varepsilon(g, \gamma, \lambda)\| \leq \sqrt{\frac{1}{\gamma-4\lambda}\frac{p+1}{p+2}\frac{1}{\left[k(p+2)\right]^{\frac{1}{p+1}}}\|g\|_{\frac{p+2}{p+1}}^{\frac{p+2}{p+1}}}, \\
	&\|\mathscr{A}_m^\varepsilon(g, \gamma, \lambda)\| \leq \sqrt{\frac{1}{\gamma-4\lambda}\frac{p+1}{p+2}\frac{1}{\left[k(p+2)\right]^{\frac{1}{p+1}}}\|g^{m}\|_{\frac{p+2}{p+1}}^{\frac{p+2}{p+1}}}. 
\end{split}
\end{equation}

In particular, if $g = 0$, both attractors reduce to a point, i.e.,
\[
\mathscr{A}^\varepsilon(0, \gamma, \lambda) = \mathscr{A}_m^\varepsilon(0, \gamma, \lambda) = \{0\}. 
\]

(ii) The following continuity properties hold for the two attractors:
\[
\lim_{g \to 0} \rho\left( \mathscr{A}^\varepsilon(g, \gamma,\lambda), \{0\} \right) = 0, \quad \lim_{g \to 0} \rho\left( \mathscr{A}_m^\varepsilon(g, \gamma, \lambda,|\mu|), \{0\} \right) = 0,
\]
\[
\lim_{\gamma-4\lambda \to +\infty} \rho\left( \mathscr{A}^\varepsilon(g, \gamma, \lambda), \{0\} \right) = 0, \quad \lim_{\gamma-4\lambda \to +\infty} \rho\left( \mathscr{A}_m^\varepsilon(g, \gamma, \lambda,|\mu|), \{0\} \right) = 0. 
\]
\end{theorem}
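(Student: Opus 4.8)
The plan is to read both upper bounds in part (i) directly off the one-step energy recursion for the implicit Euler scheme, and then obtain the continuity statements in part (ii) as immediate corollaries. For the infinite-dimensional numerical attractor I would begin from the reorganized estimate obtained in the proof of Theorem \ref{dingli2.1}, namely
\[
\|u_n^\varepsilon\|^2 \leq q\left(\|u_{n-1}^\varepsilon\|^2 + 2\varepsilon c_1\right), \qquad q := \frac{1}{1+(2\gamma-8\lambda)\varepsilon},
\]
which follows by reorganizing \eqref{ss1}. Since we assume $\gamma > 4\lambda$, the factor $q$ lies in $(0,1)$, so this is a genuine contraction driven by the forcing constant $c_1$ from \eqref{banjinheshijian}.

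Next I would use the characterization in Lemma \ref{yinli4.2}: every $z \in \mathscr{A}^\varepsilon$ is the value $u_0 = z$ of a complete bounded solution $(u_n)_{n\in\ZZ}$ of \eqref{glt}. Iterating the recursion backward from $n = 0$ to $n = -N$ gives
\[
\|u_0\|^2 \leq q^N \|u_{-N}\|^2 + 2\varepsilon c_1 q \sum_{j=0}^{N-1} q^j .
\]
Because the whole trajectory lies in the attractor it is uniformly bounded, so $q^N\|u_{-N}\|^2 \to 0$ as $N \to \infty$, while $\sum_{j\geq 0} q^j = (1-q)^{-1}$; passing to the limit leaves $\|z\|^2 \leq \frac{2\varepsilon c_1 q}{1-q} = \frac{c_1}{\gamma-4\lambda}$. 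Inserting the explicit form of $c_1$ yields the first inequality in \eqref{yitiao}. For the truncated attractor $\mathscr{A}_m^\varepsilon$ I would repeat this verbatim on the finite system \eqref{jieduangl2}, using the recursion recorded in the proof of Theorem \ref{dingli4.1} and again Lemma \ref{yinli4.2}; the only difference is that the forcing constant is assembled from $g^m$ instead of $g$, producing the second inequality in \eqref{yitiao}. When $g = 0$ we have $c_1 = 0$, so $\|\mathscr{A}^\varepsilon\| = 0$ and $\mathscr{A}^\varepsilon \subseteq \{0\}$; since $F(0) = g = 0$ makes the origin a fixed point of \eqref{glt}, it is a bounded complete trajectory and hence lies in the attractor, giving $\mathscr{A}^\varepsilon = \mathscr{A}_m^\varepsilon = \{0\}$.

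Part (ii) then reduces to a limit of the bounds from part (i), since by definition $\rho(A,\{0\}) = \|A\| = \sup_{a \in A}\|a\|$. As $g \to 0$ in $\ell^{(p+2)/(p+1)}$ we have $\|g\|_{(p+2)/(p+1)} \to 0$ and $\|g^m\|_{(p+2)/(p+1)} \leq \|g\|_{(p+2)/(p+1)} \to 0$, so $c_1 \to 0$ and both upper bounds vanish; as $\gamma - 4\lambda \to +\infty$ the explicit prefactor $\frac{1}{\gamma-4\lambda}$ drives each bound to zero. The hard part is the backward-iteration step: one must guarantee that the complete trajectory through a point of the attractor is uniformly bounded so that $q^N\|u_{-N}\|^2 \to 0$, which is exactly where Lemma \ref{yinli4.2} and the contraction factor $q < 1$ (available only because $\gamma > 4\lambda$) are indispensable. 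A secondary point to check is that, for $\mathscr{A}_m^\varepsilon$, the truncated energy estimate genuinely carries the constant built from $g^m$ rather than from $g$, so that the sharper second bound in \eqref{yitiao} --- and not merely the coarser one with $\|g\|$ --- actually holds.
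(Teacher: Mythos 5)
Your proof is correct, but it reaches the bounds by a different mechanism than the paper. Both arguments run on the same engine --- the one-step energy recursion $\|u_n^\varepsilon\|^2 \leq q\left(\|u_{n-1}^\varepsilon\|^2 + 2\varepsilon c_1\right)$ with $q = \frac{1}{1+2\varepsilon(\gamma-4\lambda)} < 1$ --- but the paper iterates it \emph{forward} from arbitrary initial data to show that every ball $B_{r_0}^m$ with $r_0^2 > \frac{c_1^m}{\gamma-4\lambda}$ is absorbing, then invokes the fact that an invariant attractor must lie inside every absorbing set, and finally lets $r_0 \downarrow \sqrt{c_1^m/(\gamma-4\lambda)}$; whereas you iterate the recursion \emph{backward} along a complete bounded orbit through an attractor point, which Lemma \ref{yinli4.2} supplies, and the geometric series $\frac{2\varepsilon c_1 q}{1-q} = \frac{c_1}{\gamma-4\lambda}$ delivers the sharp bound in a single pass with no limiting family of radii. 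Your route leans on the complete-trajectory characterization (Temam) but is shorter and gives the endpoint constant directly; the paper's route is more elementary in that it needs only the definitions of absorbing set and invariance, and the intermediate absorbing-ball statement has some independent interest. Your treatment of $g=0$ is in fact slightly more complete than the paper's: the paper asserts the reduction to $\{0\}$ follows from (i), which only yields $\mathscr{A}^\varepsilon \subseteq \{0\}$, while you additionally observe that $0$ is a fixed point (hence a bounded complete trajectory), giving equality. Your reduction of (ii) to the explicit bounds via $\rho(A,\{0\}) = \|A\|$, your remark that $\|g^m\|_{\frac{p+2}{p+1}} \leq \|g\|_{\frac{p+2}{p+1}}$, and your check that the truncated recursion carries $c_1^m$ rather than $c_1$ (the estimate $\operatorname{Re}(g^m, u_n^{\varepsilon,m}) \leq k\|u_n^{\varepsilon,m}\|_{p+2}^{p+2} + c_3\|g^m\|_{\frac{p+2}{p+1}}^{\frac{p+2}{p+1}}$ holds verbatim in $\mathbb{C}^{2m+1}$) all match what the paper does or intends.
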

\begin{proof}[Proof]
	At first, we define $c_{1}^{m}$:
	\begin{equation*}
		c_{1}^{m} = \frac{p+1}{p+2}\frac{1}{\left[k(p+2)\right]^{\frac{1}{p+1}}}\|g^{m}\|_{\frac{p+2}{p+1}}^{\frac{p+2}{p+1}}
	\end{equation*}
	(i) We prove the second inequality in \eqref{yitiao}, and the first inequality follows using a similar method. For any $\eta\textgreater0$, let $r^{0}$ be a radius that satisfies
	\begin{equation*}
		\sqrt{\frac{c_{1}^{m}}{\gamma-4\lambda}} \textless r_{0} \leq \sqrt{\eta + \frac{c_{1}^{m}}{\gamma-4\lambda}} := r^{*,m} \leq \sqrt{\eta+\frac{c_{1}}{\gamma-4\lambda}} = r^{*}.
	\end{equation*}
	Next, we prove that the ball $B_{r_{0}}^{m}$ in $\CC^{2m+1}$ is absorbing for the truncated system \eqref{jieduangl2}. Taking the inner product of \eqref{jieduangl2} with $u_{n}^{\varepsilon,m}$, then we take the real part 
	\begin{equation*}
		\| u_{n}^{\varepsilon,m} \|^{2} = \operatorname{Re}(u_{n-1}^{\varepsilon,m},u_{n}^{\varepsilon,m}) + \varepsilon\operatorname{Re} \left[    (\lambda + i\mu ) (\Lambda_{m} u_n^{\varepsilon,m}, u_n^{\varepsilon,m})  \right]  - \varepsilon \gamma \|u_n^{\varepsilon,m}\|^2
		-\varepsilon k \|u_n^{\varepsilon,m}\|_{p+2}^{p+2} + \varepsilon \operatorname{Re}(g^{m}, u_n^{\varepsilon,m}).
	\end{equation*}
	Based on the forms of the matrices $\Lambda_{m}$ in \eqref{lambdam}, similarly with \eqref{jieduandiyixiangxin} and \eqref{jieduandierxiang}, we get
	\begin{equation*}
		\operatorname{Re} \left[    (\lambda + i\mu ) (\Lambda_{m} u_n^{\varepsilon,m}, u_n^{\varepsilon,m})  \right] \leq \operatorname{Re}\left[(\lambda+i\mu)\|D_{m}^{+}u_{n}^{\varepsilon,m}\|^{2}\right]              \leq 4\lambda\| u_{n}^{\varepsilon,m} \|^{2},
	\end{equation*}
	then we use a similar method with \eqref{ss1}, we get
	\begin{equation}\label{jieduanguji}
		\| u_{n}^{\varepsilon,m} \|^{2} \leq \frac{1}{2}\| u^{\varepsilon,m}_{n} \|^{2} + \frac{1}{2}\| u_{n-1}^{\varepsilon,m} \|^{2} +  \varepsilon(4\lambda-\gamma)\|u_{n}^{\varepsilon,m}\|^{2} +\varepsilon c_{1}^{m}. 
	\end{equation}
	Therefore, for any initial $u_{0}\in B_{r}^{m}$ with $0\textless r \textless r^{*,m}$, we can rewrite \eqref{jieduanguji} as follows:
	\begin{equation*}
	\begin{split}
		\| u_{n}^{\varepsilon,m}(u_{0}) \|^{2} &\leq \frac{1}{1+2\varepsilon(\gamma-4\lambda)}\|u_{n-1}^{\varepsilon,m}\|^{2} + \frac{2+\varepsilon c_{1}^{m}}{1+2\varepsilon(\gamma-4\lambda)}\\
		&\leq  \frac{1}{\left[ 1+2\varepsilon(\gamma-4\lambda) \right]^{n}} \| u_{0}^{\varepsilon,m}\|^{2} + c_{1}^{m}\sum_{j=1}^{n} \frac{2\varepsilon}{\left[ 1+2\varepsilon(\gamma-4\lambda) \right]^{j}}\\
		&\leq \frac{r^{2}}{\left[ 1+2\varepsilon(\gamma-4\lambda) \right]^{n}} + c_{1}^{m}. 
	\end{split}
	\end{equation*}
	As mentioned before, $\sqrt{\frac{c_{1}^{m}}{\gamma-4\lambda}} \textless r_{0}$. For any $r\in(0,r^{*,m}]$, there exists $N=N(r)$ such that for any $n\geq N$ and $u_{0}\in B_{r}^{*}$, we have
	\begin{equation*}
		\|u_{n}^{\varepsilon,m}(u_{0})\|^{2} \leq r_{0}^{2}.
	\end{equation*}
	Therefore, $B_{r_{0}}^{*}$ is bounded and absorbing for the truncated system \eqref{jieduangl2}.
	
	Since an attractor is contained with any absorbing set, we have
	\begin{equation*}
		{\mathcal{A}}_{m}^{\varepsilon}(g,\gamma,\lambda,|\mu|) \subset B_{r_{0}}^{m} \Longrightarrow \| {\mathcal{A}}^{\varepsilon}_{m}(g,\gamma,\lambda,|\mu|) \|^{2}, \ \forall r_{0} \in \left( c_{1}^{m}, r^{*,m}  \right], \ \forall\varepsilon \in \left(  0,\varepsilon^{*,m} \right].
	\end{equation*}
	As $r_{0} \to c_{1}^{m}$, we get
	\begin{equation*}
		\| {\mathcal{A}}_{m}^{\varepsilon}(g,\gamma,\lambda,|\mu|) \| \leq c_{1}^{m},\ \forall \varepsilon\in \left( 0,\varepsilon^{*} \right],\ m\in\NN.
	\end{equation*}
\end{proof}
The case when $g=0$ and Conclusion (ii) are directly derived from Conclusion(i).
\section{Finite dimensional approximation of random attrator}
The stochastic of Ginzburg-Landau lattice system \eqref{gl} can be written as follows:
\begin{equation}\label{suijigl}
\begin{cases}
	&\frac{d u(t)}{d t}=(\lambda+i\mu)\Lambda u-(\gamma+i\beta)u-(k+i\nu)|u|^{p}u+g + a u \circ \frac{dW}{dt},\\
	&u(0)=u_0.
\end{cases}
\end{equation}
where $a \textgreater 0$ denotes the noise intensity, $\circ$ represents the Stratonovich stochastic differential, and $W(t)$ is a Wiener process defined on a complete filtered probability space $(\Omega,\mathcal{F},\left\{\mathcal{F}_{t}\right\}_{t\in\mathbb{R}},\mathbb{P})$.
\subsection{Existence, upper semi-convergence and upper semi-continuity of random attractor}
Let $W(t,\omega)(t\in\RR)$ denote the standard one-dimensional, two-sided Wiener process with sample paths $\omega(t)$ in the classical Wiener space $(\Omega,\mathcal{F},\mathbb{P},\left\{\theta_{t}\right\}_{t\in\RR})$, where
\begin{equation*}
	\Omega = \left\{ \omega\in C (\CC,\CC):\ \omega(0)=0 \right\},
\end{equation*}
and the Borel $\sigma$-algebra $\mathcal{F}$ is generated by the compact-open topology. The shift operator is given by $\theta_{t}\omega(\cdot) = \omega(t+\cdot) - \omega(t)$, as detailed in \cite{Bates-2009,Li-2015}. We consider the It\^o equation $dz + zdt = dW(t) $, thus we get the stochastic Ornstein-Uhlenbeck process
\begin{equation*}
	z(\theta_{t}\omega) := -\int_{-\infty}^{0} {\rm{e}}^{s} (\theta_{t}\omega)(s)ds = - \int_{-\infty}^{0} {\rm{e}}^{s} \omega(t+s)ds+\omega(t),\ t\in\RR,\ \omega\in\Omega.
\end{equation*}
More properties of the Ornstein-Uhlenbeck process can be found in \cite{Aronld-1998}. $z(\theta_{t}\omega)$ is the real-valued path-continuous Ornstein Uhlenbeck process with the following limits:
\begin{equation}\label{OUxingzhi}
	\lim_{t\to\pm\infty} \frac{|z(\theta_{t}\omega)|}{|t|} = 0 \ \text{and}\ \lim_{t\to\pm\infty} \frac{1}{t} \int_{0}^{t} z(\theta_{t}\omega)dt = 0.
\end{equation}

To analyze the pathwise dynamics of the stochastic Ginzburg-Landau system, we employ the Ornstein-Uhlenbeck transform to convert the original stochastic equation into a random PDE. Let $u$ be the solution of \eqref{suijigl} and define
\begin{equation*}
	U^{a}(t,\tau,\omega,U_{\tau}) = {\rm{e}}^{-az(\theta_{t}\omega)} u(t,\tau,\omega,u_{\tau}),\ \text{wiht}\ U_{\tau} = {\rm{e}}^{-az(\theta_{t}\omega)}u_{\tau}, 
\end{equation*}
then, we have
\begin{equation}\label{daihuan}
	dU^{a} = {\rm{e}}^{-az(\theta_{t}\omega)}du - a{\rm{e}}^{-az(\theta_{t}\omega)}u\circ dz(\theta_{t}\omega).
\end{equation}
Based on \eqref{suijigl} and \eqref{daihuan}, we consider the equation with random coefficients as follows:
\begin{equation}\label{suijigl2}
	\frac{dU^{a}}{dt} = (\lambda+i\mu)\Lambda U^{a} - (\gamma+i\beta)U^{a} - (k+i\nu)(|U^{a}|^{p})U^{a} {\rm{e}}^{paz(\theta_{t}\omega)} + {\rm{e}}^{-az(\theta_{t}\omega)}g + aU^{a}z(\theta_{t}\omega),
\end{equation}
the initial condition is 
\begin{equation*}
	U_{0} = {\rm{e}}^{-az(\theta_{t}\omega)}u_{0}.
\end{equation*}
Let $\Phi^{a}$ denote the continuous random dynamical system associated with the problem in \eqref{suijigl2}, which can be expressed as:
\begin{equation*}
	\Phi^{a}(\cdot,\omega,\Phi_{0}) = U^{a}(\cdot,\omega,U_{0})\in C([0,+\infty),\ell^{2});
\end{equation*}
Thus  generates a random dynamical system $\Gamma^{a}:\RR^{+}\times\Omega\times\ell^{2}\to\ell^{2}$ given by
\begin{equation}\label{suijixitong}
	\Gamma^{a}(t,\omega) \Phi_{0}=\Phi^{a}(t,\omega,\Phi_{0}),\ \forall(t,\omega,\Phi_{0})\in \RR^{+}\times\Omega\times\ell^{2}.
\end{equation}
Below, we present two lemmas regarding $\tilde{\mathcal{D}}$-absorption and tail estimates. Let $\tilde{\mathcal{D}}$ denote the universe of closed tempered random sets in $X:=\ell^{2}$. A set $\mathcal{D}\in\tilde{\mathcal{D}}$ if $\PP$ as:
\begin{equation}\label{huanzengji}
	\lim_{t\to+\infty} {\rm{e}}^{-\rho t} \sup_{\Phi^{a}\in\mathcal{D}(\theta_{-t}\omega)}\|\Phi^{a}\|_{X} = 0, \ \forall \rho\textgreater0.
\end{equation}
This polynomial decay ensures the asymptotic absorption property required for attractor construction.
\begin{lemma}\label{quanjusuijixishou}
	For any $\omega \in \Omega$, $\tau \in \mathbb{R}$ and $E = \{E(\tau, \omega) : \tau \in \mathbb{R}, \omega \in \Omega\} \in \mathscr{D}$, there exists a $T := T(\tau, \omega, E,  a) > 0$ such that for all $t \geq T$ and $\Phi_0 \in \mathscr{D}(\theta_{-t}\omega)$, the following holds:
	\begin{equation}
	\|\Phi^a(t, \theta_{-t}\omega, \Phi_0)\|_{\mathbb{X}}^2 \leq R(a, \omega),
	\end{equation}
	where
	\begin{equation}\label{suijixishoubanjin}
	R(a, \omega) =\eta + 2c_{3} \|g\|_{\frac{p+2}{p+1}}^{\frac{p+2}{p+1}} \int_{-\infty}^{0} {\rm{e}}^{-az(\theta_{s}\omega)-\int_{0}^{s}2az(\theta_{h}\omega)dh - (8\lambda-2\gamma)s} ds,
	\end{equation}
	where $c_{3}$ will be defined in the following proof.
\end{lemma}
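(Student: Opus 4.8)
The plan is to obtain a pathwise energy estimate for the transformed system \eqref{suijigl2} and then close it with a random Gronwall argument, mirroring the deterministic absorbing-set computation of Lemma \ref{yinli2.2} but carrying the multiplicative factors $e^{\pm a z(\theta_t\omega)}$ through every term. First I would take the real part of the inner product of \eqref{suijigl2} with $U^a$. Using $(\Lambda U^a, U^a) = \|D^+ U^a\|^2$ together with $\|D^+\| \leq 2$, the diffusion term $(\lambda+i\mu)\Lambda U^a$ contributes at most $8\lambda\|U^a\|^2$ (after the factor $2$ coming from $\tfrac{d}{dt}\|U^a\|^2 = 2\operatorname{Re}(\,\cdot\,,U^a)$); the linear term gives $-2\gamma\|U^a\|^2$; the nonlinearity gives the nonpositive term $-2k\,e^{p a z(\theta_t\omega)}\|U^a\|_{p+2}^{p+2}$; and the Ornstein--Uhlenbeck term $a U^a z(\theta_t\omega)$ gives $2a z(\theta_t\omega)\|U^a\|^2$. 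For the forcing $e^{-a z(\theta_t\omega)}g$ I would apply H\"older's and Young's inequalities as in \eqref{disixiang}, choosing the constants so that the resulting $\|U^a\|_{p+2}^{p+2}$ contribution is absorbed into the dissipative nonlinear term; what remains is a purely random forcing proportional to $\|g\|_{\frac{p+2}{p+1}}^{\frac{p+2}{p+1}}$ carrying the exponential weight in $z(\theta_t\omega)$ that appears in the integrand of \eqref{suijixishoubanjin}, with a constant $c_3$ depending only on $p$ and $k$.

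Collecting these estimates yields the differential inequality
\begin{equation*}
\frac{d}{dt}\|U^a\|^2 \leq \bigl(8\lambda - 2\gamma + 2a z(\theta_t\omega)\bigr)\,\|U^a\|^2 + 2c_3\, e^{-a z(\theta_t\omega)}\,\|g\|_{\frac{p+2}{p+1}}^{\frac{p+2}{p+1}}.
\end{equation*}
Since we assume $\gamma > 4\lambda$, the deterministic part $8\lambda - 2\gamma$ of the coefficient is strictly negative, which is what makes the system dissipative. I would then multiply by the integrating factor $\exp\bigl(-\int_0^t (8\lambda-2\gamma+2a z(\theta_s\omega))\,ds\bigr)$ and integrate over $[0,t]$, producing a bound that consists of an initial-data term weighted by $\exp\bigl(\int_0^t(8\lambda-2\gamma+2a z(\theta_s\omega))\,ds\bigr)$ and a convolution-type integral of the forcing against the same weight.

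The decisive step is the replacement $\omega \mapsto \theta_{-t}\omega$ followed by the passage $t\to\infty$. Using the shift identity $z(\theta_s\theta_{-t}\omega) = z(\theta_{s-t}\omega)$ together with the change of variables $s \mapsto s-t$, the Gronwall weight and the intrinsic forcing exponential combine into exactly the integrand $e^{-a z(\theta_s\omega) - \int_0^s 2a z(\theta_h\omega)\,dh - (8\lambda-2\gamma)s}$, so the forcing integral over $[0,t]$ increases to the improper integral over $(-\infty,0]$ defining $R(a,\omega)$ in \eqref{suijixishoubanjin}. The main obstacle is to verify simultaneously that (i) this improper integral is finite and (ii) the initial-data term is absorbed; both rest on the sublinear growth \eqref{OUxingzhi} of the Ornstein--Uhlenbeck process. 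Indeed, \eqref{OUxingzhi} guarantees that as $s\to-\infty$ the quantity $-(8\lambda-2\gamma)s - \int_0^s 2a z(\theta_h\omega)\,dh$ is dominated by the strictly positive linear term $(2\gamma-8\lambda)|s|$, so the integrand decays exponentially and $R(a,\omega)$ is finite. For the initial-data term, since $\Phi_0 \in \mathscr{D}(\theta_{-t}\omega)$ and $\mathscr{D}$ is tempered in the sense of \eqref{huanzengji}, the same growth estimate forces $\exp\bigl(\int_0^t(8\lambda-2\gamma+2a z(\theta_s\omega))\,ds\bigr)\|\Phi_0\|_X^2 \to 0$ as $t\to\infty$; hence it can be made smaller than the free parameter $\eta$ for all $t \geq T(\tau,\omega,E,a)$, which accounts for the additive $\eta$ in $R(a,\omega)$ and completes the absorption bound. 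The careful bookkeeping of the two-sided Ornstein--Uhlenbeck integrals under the cocycle shift, and the matching of the temperedness estimate against the exponential weight, is the only genuinely delicate part; the term-by-term energy estimate is routine given Lemma \ref{yinli2.1} and the computation in Lemma \ref{yinli2.2}.
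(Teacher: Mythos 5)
Your proposal is correct and follows essentially the same route as the paper's proof: the identical energy estimate leading to the differential inequality $\frac{d}{dt}\|U^a\|^2 \leq 2\bigl(4\lambda-\gamma+az(\theta_t\omega)\bigr)\|U^a\|^2 + 2c_3 e^{-az(\theta_t\omega)}\|g\|_{\frac{p+2}{p+1}}^{\frac{p+2}{p+1}}$ with the same $c_3$, followed by Gronwall with the random integrating factor, the pullback substitution $\omega\mapsto\theta_{-t}\omega$, and the use of \eqref{OUxingzhi} plus temperedness to control the improper integral and kill the initial-data term. The only cosmetic difference is that the paper runs Gronwall on $[\tau-t,r]$ and then sets $r=\tau$, whereas you integrate directly on $[0,t]$; these are equivalent.
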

\begin{proof}[proof]
For convenience, we substitute $U^{a}$ with $U$ in the proof of this lemma. Taking the inner product of \eqref{suijigl2} with $U$, then we take the real part
\begin{equation}\label{suijilianxuweibuguji1}
	\frac{1}{2}\frac{d}{dt}\|U\|^{2} = \lambda\|D^{+}U\|^{2} -\gamma\|U\|^{2} -k\|U\|^{p+2}_{p+2}{\rm{e}}^{paz(\theta_{t}\omega)} + az(\theta_{t}\omega)\|U\|^{2} + \operatorname{Re}({\rm{e}}^{-az(\theta_{t}\omega)}g,U).
\end{equation}
Using a similar method in Theorem \ref{dingli2.1}, we have
\begin{equation*}
\begin{split}
	\lambda\|D^{+}U\|^{2} &\leq 4\lambda\|U\|^{2},\\
	\operatorname{Re}({\rm{e}}^{-az(\theta_{t}\omega)}g,U) &\leq k\|U\|^{p+2}_{p+2}{\rm{e}}^{paz(\theta_{t}\omega)} + \frac{p+1}{p+2}\frac{{\rm{e}}^{-az(\theta_{t}\omega)}}{[k(p+2)]^{\frac{1}{p+1}}}\|g\|_{\frac{p+2}{p+1}}^{\frac{p+2}{p+1}},
\end{split}
\end{equation*}
for convenience, we denote
\begin{equation*}
\begin{split}
	c_{3}=\frac{p+1}{p+2}\frac{1}{[k(p+2)]^{\frac{1}{p+1}}},
\end{split}
\end{equation*}
then we can rewrtie the \eqref{suijilianxuweibuguji1}
\begin{equation}\label{suijilianxuweibuguji2}
	\frac{d}{dt}\|U\|^{2} \leq 2(4\lambda-\gamma+az(\theta_{t}\omega))\|U\|^{2} + 2c_{3}\|g\|_{\frac{p+2}{p+1}}^{\frac{p+2}{p+1}}{\rm{e}}^{-az(\theta_{t}\omega)}.
\end{equation}
By the Gronwall's inequality on the $[\tau-t,r]$ with $r\geq\tau-t$, and substituting $\omega$ by $\theta_{-\tau}\omega$, it follows that
\begin{equation}\label{suijilianxuweibuguji3}
\begin{split}
	\|U_{t}\|^{2} &\leq {\rm{e}}^{\int_{-t}^{r-\tau} 2az(\theta_{s}\omega)ds + (8\lambda-2\gamma)(r-\tau+t)}\| U_{\tau-t} \|^{2} + 2c_{3}\int_{-t}^{r-\tau} {\rm{e}}^{-az(\theta_{s}\omega)} {\rm{e}}^{\int_{s}^{r-\tau} 2az(\theta_{h}\omega dh)} {\rm{e}}^{(8\lambda-2\gamma)(r-\tau-s)} \|g\|_{\frac{p+2}{p+1}}^{\frac{p+2}{p+1}} ds\\
	& = {\rm{e}}^{\int_{-t}^{r-\tau} 2az(\theta_{s}\omega)ds + (8\lambda-2\gamma)(r-\tau+t)}\| U_{\tau-t} \|^{2} \\
	&\quad+ 2c_{3}{\rm{e}}^{(8\lambda-2\gamma)(r-\tau)} {\rm{e}}^{\int_{r-\tau}^{0}2az(\theta_{h}\omega)dh} \int_{-t}^{r-\tau} {\rm{e}}^{-az(\theta_{s}\omega)-\int_{0}^{s}2az(\theta_{h}\omega)dh - (8\lambda-2\gamma)s} \|g\|_{\frac{p+2}{p+1}}^{\frac{p+2}{p+1}} ds.
\end{split}
\end{equation}
According to \eqref{OUxingzhi}, there exists a $T_{1}=T_{1}(\tau,\omega,E,a_{0})(0\textless a\textless a_{0})$ such that for any $t\geq T_{1} \textgreater0$, we have
\begin{equation*}
	|z(\theta_{-t}\omega)| \leq \frac{(-8\lambda+4\gamma)t}{4a_{0}},\quad |\int_{0}^{-t}z(\theta_{r}\omega)dr|\leq \frac{(-8\lambda-4\gamma)t}{8a_{0}}.
\end{equation*}
Thus, for any $t\geq T_{1}\textgreater0$, we get
\begin{equation*}
	c_{3}\int_{-t}^{-T_{1}} {\rm{e}}^{-az(\theta_{s}\omega)-\int_{0}^{s}2az(\theta_{h}\omega)dh - (8\lambda-2\gamma)s} \|g\|_{\frac{p+2}{p+1}}^{\frac{p+2}{p+1}} ds \leq c_{3}\int_{-t}^{-T_{1}} {\rm{e}}^{(\gamma-4\lambda)s}\|g\|_{\frac{p+2}{p+1}}^{\frac{p+2}{p+1}}ds.
\end{equation*}
Since $g\in\ell^{2}$, we have
\begin{equation}\label{suijilianxuweibugujibudengshi1}
	c_{3}\int_{-t}^{-T_{1}} {\rm{e}}^{(\gamma-4\lambda)s} \|g\|_{\frac{p+2}{p+1}}^{\frac{p+2}{p+1}}ds \textless +\infty.
\end{equation}
Thus, we get
\begin{equation*}
	c_{3}\int_{-t}^{r-\tau} {\rm{e}}^{-az(\theta_{s}\omega)-\int_{0}^{s}2az(\theta_{h}\omega)dh - (8\lambda-2\gamma)s} \|g\|_{\frac{p+2}{p+1}}^{\frac{p+2}{p+1}} ds \leq c_{3}\int_{-\infty}^{r-\tau} {\rm{e}}^{-az(\theta_{s}\omega)-\int_{0}^{s}2az(\theta_{h}\omega)dh - (8\lambda-2\gamma)s} \|g\|_{\frac{p+2}{p+1}}^{\frac{p+2}{p+1}} ds,
\end{equation*}
where the integral is convergent due to \eqref{suijilianxuweibugujibudengshi1}. Therefore, we yield
\begin{equation*}
\begin{split}
	\|U(r,\tau-t,\theta_{-\tau}\omega,U_{\tau-t})\|^{2} &\leq   {\rm{e}}^{\int_{-t}^{r-\tau} 2az(\theta_{s}\omega)ds + (8\lambda-2\gamma)(r-\tau+t)}\| U_{\tau-t} \|^{2} \\
		&\quad+ 2c_{3}{\rm{e}}^{(8\lambda-2\gamma)(r-\tau)} {\rm{e}}^{\int_{r-\tau}^{0}2az(\theta_{h}\omega)dh} \int_{-\infty}^{r-\tau} {\rm{e}}^{-az(\theta_{s}\omega)-\int_{0}^{s}2az(\theta_{h}\omega)dh - (8\lambda-2\gamma)s} \|g\|_{\frac{p+2}{p+1}}^{\frac{p+2}{p+1}} ds.
\end{split}
\end{equation*}
When $r=\tau$, we have
\begin{equation*}
\begin{split}
	\|U(r,\tau-t,\theta_{-\tau}\omega,U_{\tau-t})\|^{2}&\leq {\rm{e}}^{\int_{-t}^{0} 2az(\theta_{s}\omega)ds + (8\lambda-2\gamma)t}\| U_{\tau-t} \|^{2}\\
	&\quad + 2c_{3} \int_{-\infty}^{0} {\rm{e}}^{-az(\theta_{s}\omega)-\int_{0}^{s}2az(\theta_{h}\omega)dh - (8\lambda-2\gamma)s} \|g\|_{\frac{p+2}{p+1}}^{\frac{p+2}{p+1}} ds,
\end{split}
\end{equation*}
also from the \eqref{OUxingzhi}, we have
\begin{equation*}
	\int_{-\infty}^{0} {\rm{e}}^{-az(\theta_{s}\omega)-\int_{0}^{s}2az(\theta_{h}\omega)dh - (8\lambda-2\gamma)s} ds \textless +\infty.
\end{equation*} 
Note that $\left\{E(\tau,\omega):\tau\RR,\omega\in\Omega\right\}\in\mathcal{D}$ is tempered. Therefore, for any $U_{\tau-t}\in E(\tau-t,\theta_{-t}\omega)$, we derive
\begin{equation*}
	\lim_{t\to+\infty} {\rm{e}}^{\int_{-t}^{0} 2az(\theta_{s}\omega)ds + (8\lambda-2\gamma)t}\| U_{\tau-t} \|^{2} = 0.
\end{equation*}
This completes the proof.
\end{proof}

According to Lemma \ref{quanjusuijixishou}, the random dynamical system $\Gamma^{a}(\cdot,\omega)$ has a random $\tilde{\mathcal{D}}$-absorbing set $\mathcal{G}\in\tilde{\mathcal{D}}$, which is described by 
\begin{equation}\label{suijixishouji}
	\mathcal{G}^{a} = \left\{\Phi^{a}\in X: \|\Phi^{a}\|_{X}^{2}\leq R(a,\omega)\right\},\ \omega\in\Omega.
\end{equation}
\begin{lemma}\label{suijilianxuweibuguji}
For any $\delta > 0, \mathscr{D} \in \tilde{\mathscr{D}}$ and $\omega \in \Omega$, there exist $N(\delta, \omega)$, $T(\delta, \omega, \mathscr{G}^{a}(\theta_{-t}\omega))$, $\Phi_0 \in \mathscr{D}(\theta_{-t}\omega)$ such that
\begin{equation}\label{suijiweibugujijieguo}
	\bigl\| U(t, \theta_{-t}\omega, U_0) \bigr\|_{X(|j| \geq N(\delta, \omega))}^2 \leq \delta, \quad \forall \ t \geq T(\delta, \omega, \mathscr{G}^{a}(\theta_{-t}\omega)).
\end{equation}
\end{lemma}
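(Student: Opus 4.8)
The plan is to establish the asymptotic tail estimate by the same cut-off/localization technique used in the proofs of Theorem \ref{dingli3.1} and Theorem \ref{dingli3.2}, now applied to the random equation \eqref{suijigl2}, and then to close the argument with the random Gronwall estimate already carried out in Lemma \ref{quanjusuijixishou}. First I would fix the smooth cut-off $\xi$ with $\xi(s)=0$ on $[0,1]$ and $\xi(s)=1$ on $[2,+\infty)$, set $\xi_{l,j}=\xi(|j|/l)$, and recall that $\|D^{+}\xi_{l}\|_{\infty}\leq c_{2}/l$. Writing $U=U^{a}$ for brevity, I take the inner product of \eqref{suijigl2} with $\xi_{l}U$ and pass to the real part, obtaining an evolution identity for $\sum_{j\in\ZZ}\xi_{l,j}|U_{j}|^{2}$.

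Next I would estimate the right-hand side term by term, mirroring \eqref{diyixiang1}--\eqref{disixiang}. Taking real parts of the diffusion term $(\lambda+i\mu)(D^{+}U,D^{+}(\xi_{l}U))$ and using $D^{+}(\xi_{l}U)=\xi_{l}D^{+}U+UD^{+}\xi_{l}$ gives $4\lambda\sum_{j}\xi_{l,j}|U_{j}|^{2}$ plus a remainder bounded by $(4\lambda+2|\mu|)\frac{c_{2}}{l}\|U\|^{2}$; the linear damping yields $-\gamma\sum_{j}\xi_{l,j}|U_{j}|^{2}$; the cubic-type term has real part $-k\,{\rm e}^{paz(\theta_{t}\omega)}\sum_{j}\xi_{l,j}|U_{j}|^{p+2}\leq 0$, which I would absorb against the localized forcing, since by Young's and H\"older's inequalities exactly as in Lemma \ref{quanjusuijixishou} one has ${\rm e}^{-az(\theta_{t}\omega)}\operatorname{Re}(g,\xi_{l}U)\leq k\,{\rm e}^{paz(\theta_{t}\omega)}\sum_{j}\xi_{l,j}|U_{j}|^{p+2}+c_{3}\,{\rm e}^{-az(\theta_{t}\omega)}\sum_{j}\xi_{l,j}|g_{j}|^{\frac{p+2}{p+1}}$; finally the Stratonovich-correction term contributes $az(\theta_{t}\omega)\sum_{j}\xi_{l,j}|U_{j}|^{2}$. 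Collecting these yields the differential inequality
\begin{equation*}
\frac{d}{dt}\sum_{j\in\ZZ}\xi_{l,j}|U_{j}|^{2}\leq 2\bigl(4\lambda-\gamma+az(\theta_{t}\omega)\bigr)\sum_{j\in\ZZ}\xi_{l,j}|U_{j}|^{2}+2(4\lambda+2|\mu|)\frac{c_{2}}{l}\|U\|^{2}+2c_{3}\,{\rm e}^{-az(\theta_{t}\omega)}\sum_{j\in\ZZ}\xi_{l,j}|g_{j}|^{\frac{p+2}{p+1}},
\end{equation*}
which is the exact localized analogue of \eqref{suijilianxuweibuguji2}.

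I would then apply Gronwall's inequality on $[-t,0]$ and replace $\omega$ by $\theta_{-t}\omega$, reproducing the integrating factor ${\rm e}^{\int 2az(\theta_{s}\omega)ds+(8\lambda-2\gamma)\,\cdot}$ that already appears in \eqref{suijilianxuweibuguji3}. Because $\gamma>4\lambda$ and by the sublinear growth \eqref{OUxingzhi} of the Ornstein--Uhlenbeck process, the relevant integrals converge exactly as in \eqref{suijilianxuweibugujibudengshi1}; the tempered initial-data factor ${\rm e}^{\int_{-t}^{0}2az(\theta_{s}\omega)ds+(8\lambda-2\gamma)t}\|U_{0}\|^{2}$ tends to $0$ as $t\to\infty$ for $\Phi_{0}\in\mathscr{D}(\theta_{-t}\omega)$. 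It then remains to kill the two tail contributions: the absorbing estimate of Lemma \ref{quanjusuijixishou} bounds $\|U\|^{2}$ so that the $c_{2}/l$ remainder is $O(1/l)$, and since $g$ lies in the relevant sequence space the localized forcing tail $\sum_{j\in\ZZ}\xi_{l,j}|g_{j}|^{\frac{p+2}{p+1}}\to 0$ as $l\to\infty$, just as in Theorems \ref{dingli3.1} and \ref{dingli3.2}. Hence for any $\delta>0$ I first choose $l$ (and set $N(\delta,\omega)=2l$) so that both terms fall below $\delta/2$ after the Gronwall integration, and then choose $T$ large enough that the initial-data term is below $\delta/2$, giving $\sum_{|j|\geq N}|U_{j}|^{2}\leq\sum_{j\in\ZZ}\xi_{l,j}|U_{j}|^{2}\leq\delta$ for all $t\geq T$.

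The main obstacle I anticipate is the interplay of the two limits: the random integrating factor ${\rm e}^{\int_{-t}^{0}2az(\theta_{s}\omega)ds}$ is not uniformly bounded in $t$, so the $c_{2}/l$ remainder and the forcing tail must be estimated \emph{after} the Gronwall step (where the Ornstein--Uhlenbeck integral is tamed exactly as in \eqref{suijilianxuweibugujibudengshi1}) rather than before, and one must verify that the threshold $l$ rendering these terms small can be chosen independently of the large time $t$. Controlling these random exponential weights uniformly so that the $l\to\infty$ and $t\to\infty$ limits may be taken in the correct order is the delicate point; the remaining computations are the deterministic tail estimates of Theorems \ref{dingli3.1} and \ref{dingli3.2} carried out verbatim.
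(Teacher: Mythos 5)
Your proposal follows essentially the same route as the paper's own proof: the same cut-off localization applied to \eqref{suijigl2}, the same differential inequality (the paper's \eqref{suijilianxuweibuguji11}), Gronwall with the substitution $\omega \mapsto \theta_{-t}\omega$, and then the same three-way split into a tempered initial-data term, the $c_{2}/l$ remainder controlled via the absorbing bound of Lemma \ref{quanjusuijixishou}, and the localized forcing tail, with $l$ chosen first and $T$ second — exactly the paper's $J_{1}+J_{2}+J_{3}$ decomposition (the paper uses $\delta/3$ where you use $\delta/2$, which is immaterial). The delicate point you flag, namely estimating the $O(1/l)$ terms after the Gronwall step so that the Ornstein--Uhlenbeck integrals are tamed by \eqref{OUxingzhi} and the threshold $l$ is independent of $t$, is precisely how the paper closes the argument.
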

\begin{proof}[proof]
Taking inner product of \eqref{suijigl2} with $\xi_{l}U$ and then we take the real part, where $\xi_{l}$ is defined in Theorem \ref{dingli3.1}, from \eqref{diyixiang1} and \eqref{diyixiang2}, we get
\begin{equation}\label{suijilianxuweibuguji11}
\begin{split}
	\frac{1}{2} \frac{d}{dt} \sum_{j \in \mathbb{Z}} \xi_{l,j}|U_{j}|^{2} &\leq 4\lambda\sum_{j \in \mathbb{Z}}\xi_{l,j}|U_{j}|^{2} + (4\lambda+2\mu)\frac{c_{2}}{l}\|U\|^{2} - \gamma\sum_{j \in \mathbb{Z}}\xi_{l,j}|U_{j}|^{2} - k{\rm{e}}^{paz(\theta_{t}\omega)}\sum_{j \in \mathbb{Z}} \xi_{l,j}|U_{j}|^{p+2}\\
	&\quad + {\rm{e}}^{-az(\theta_{t}\omega)}\operatorname{Re}(g,\xi_{l}U) + az(\theta_{t}\omega)\sum_{j \in \mathbb{Z}}|U_{j}|^{2}\\ 
	&\leq (4\lambda-\gamma+az(\theta_{t}\omega))\sum_{j \in \mathbb{Z}}|U_{j}|^{2} + c_{3}{\rm{e}}^{-az(\theta_{t}\omega)}\sum_{j \in \mathbb{Z}}\xi_{l,j}|g_{j}|^{\frac{p+1}{p+2}} + (4\lambda+2\mu)\frac{c_{2}}{l}\|U\|^{2}.
\end{split}
\end{equation}
Using Gronwall's inequality on \eqref{suijilianxuweibuguji11} from $T_{k} = T_{k}(\omega)\geq0$ to $t\geq T_{k}$, and then substituting $\omega$ with $\theta_{-t}\omega$, we have
\begin{equation}\label{suijilianxuweibuguji12}
\begin{split}
	&\quad\sum_{j \in \mathbb{Z}}\xi_{l,j}|U_{j}(t,\theta_{-t}\omega,U_{0}(\theta_{-t}\omega))|^{2}\\
	 &\leq {\rm{e}}^{-2(\gamma-4\lambda)(t-T_{k}) + \int_{T_{k}}^{t} 2az(\theta_{s-t}\omega)ds}\sum_{j \in \mathbb{Z}}\xi_{l,j}|U_{j}(T_{k},\theta_{-t}\omega,U_{0}(\theta_{-t}\omega))|^{2}\\
	& \quad+ (4\lambda+2\mu) \frac{c_{2}}{l}\int_{T_{k}}^{t} {\rm{e}}^{-2(\gamma-4\lambda)(t-\tau)+\int_{\tau}^{t}2az(\theta_{s-t}\omega)ds} \|U(\tau,\theta_{-t}\omega,U_{0}(\theta_{-t}\omega))\|^{2}d\tau\\
	&\quad+ c_{3}\int_{T_{k}}^{t} {\rm{e}}^{-az(\theta_{\tau-t}\omega)}{\rm{e}}^{-2(\gamma-4\lambda)(t-\tau)+\int_{\tau}^{t}2az(\theta_{s-t}\omega)ds}d\tau \sum_{j \in \mathbb{Z}}\xi_{l,j}|g_{j}|^{\frac{p+2}{p+1}}\\
	&=: J_{1} + J_{2} + J_{3}.
\end{split}
\end{equation}
According to \eqref{huanzengji}, $J_{1}\to0$ as $t\to\infty$. This implies that for any $\delta\textgreater0$, there exists a $T_{1}=T_{1}(\delta,\omega,\mathcal{G}^{a}(\theta_{-t}\omega))\geq T_{k}$ such that
\begin{equation}\label{suijilianxuweibugujidiyixiang}
	J_{1}(t) \leq \frac{\delta}{3} {\rm{e}}^{-2z(\omega)},\ t\geq T_{1}.
\end{equation}
For Lemma \ref{quanjusuijixishou}, we can find $T_{2} = T_{2}(\delta,\omega,\mathcal{G}^{a}(\theta_{-t}\omega))\textgreater T_{k}(\omega)$ and $N_{1} = N_{1}(\delta,\omega)\textgreater0$, such that if $t\textgreater T_{2}$ and $l\textgreater N_{1}$, there holds
\begin{equation}\label{suijilianxuweibugujidierxiang}
	J_{2}(t) \leq \frac{\delta}{3} {\rm{e}}^{-2z(\omega)}.
\end{equation}
In face, by \eqref{OUxingzhi}, we have the following estimation:
\begin{equation*}
	\int_{T_{k}}^{t} {\rm{e}}^{-az(\theta_{\tau-t}\omega)}{\rm{e}}^{-2(\gamma-4\lambda)(t-\tau)+\int_{\tau}^{t}2az(\theta_{s-t}\omega)ds}d\tau \leq \int_{0}^{t} {\rm{e}}^{-az(\theta_{\tau-t}\omega)}{\rm{e}}^{-2(\gamma-4\lambda)(t-\tau)+\int_{\tau}^{t}2az(\theta_{s-t}\omega)ds}d\tau \textless \infty,
\end{equation*}
since $g\in\ell^{2}$, there exists $N_{2}= N_{2}(\delta,\omega)\in\NN$ such that
\begin{equation}\label{suijilianxuweibugujidisanxiang}
	J_{3}(t) \leq \frac{\delta}{3} {\rm{e}}^{-2z(\omega)},\ \forall l \textgreater N_{2}.
\end{equation}
Therefore, by substituting \eqref{suijilianxuweibugujidiyixiang}-\eqref{suijilianxuweibugujidisanxiang} into \eqref{suijilianxuweibuguji12}, we obtain
\begin{equation}\label{suijilianxuweibugujijieguo}
	\sum_{|j|\geq N(\delta,\omega)}\xi_{l,j}|U_{j}(t,\theta_{-t}\omega,U_{0}(\theta_{-t}\omega))|^{2} \leq \delta {\rm{e}}^{-2z(\omega)},\ t\geq\max\left\{T_{1},T_{2}\right\},\ l\geq\max\left\{N_{1}, N_{2}\right\},
\end{equation}
provided $N(\delta,\omega)$ is large enough. Based on \eqref{suijilianxuweibugujijieguo}, we can deduce that
\begin{equation*}
	\sum_{|j|\geq N(\delta,\omega)}\xi_{l,j}|u_{j}(t,\theta_{-t}\omega,u_{0}(\theta_{-t}\omega))|^{2} \leq \delta.
\end{equation*}
Thus, the proof is complete.
\end{proof}

Recall that a random compact set $\mathscr{A}(\omega)$ of $X$ is called a random $\tilde{\mathscr{D}}$-attractor for a random dynamical system $\Phi$ with semigroup $\mathscr{H}(\cdot)$ if $\mathscr{A} := \{\mathscr{A}(\omega)\} \in \tilde{\mathscr{D}}$, and $\mathscr{A}$ satisfies the invariance property, meaning that for any $t \geq 0$ and $\omega \in \Omega$, $\Phi(t, \omega)\mathscr{A}(\omega) = \mathscr{A}(\theta_t\omega)$. Additionally, $\mathscr{A}$ is $\tilde{\mathscr{D}}$-attracting, i.e.,
\begin{equation}
	\lim_{t \to \infty} \mathrm{dist}_X \bigl( \Phi(t, \theta_{-t}\omega) \mathscr{D}(\theta_{-t}\omega), \mathscr{A}(\omega) \bigr) = 0, \ \forall \mathscr{D} \in \tilde{\mathscr{D}}, \ \omega \in \Omega.
\end{equation}
The subsequent theorem establishes the existence of the random attractor, as well as the upper semi-convergence relationship between the random attractor and the global attractor.

\begin{theorem}\label{dingli5.1}
The random dynamical system $\Phi^a$ generated by the problem \eqref{suijigl2} has a random attractor $\mathscr{A}(\omega)$ in $X$. Furthermore, $\mathscr{A}(\omega)$ upper semi-converges to the global attractor $\mathscr{A}$, which means that
\begin{equation}\label{5.21}
\lim_{a \to 0} \mathrm{dist}_X \bigl( \mathscr{A}^{a}(\omega), \mathscr{A} \bigr) = 0, \ \mathbb{P}\text{-a.s.} \ \omega \in \Omega.
\end{equation}
\end{theorem}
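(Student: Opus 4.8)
The plan is to separate the statement into the existence of the random attractor and its upper semi-convergence, treating each by the standard pullback machinery. For the existence part I would invoke the abstract criterion for random attractors (as in \cite{Bates-2009}): it suffices to produce a tempered random $\tilde{\mathcal{D}}$-absorbing set together with pullback asymptotic compactness of the cocycle $\Gamma^a$. The absorbing set $\mathcal{G}^a$ is already furnished by Lemma \ref{quanjusuijixishou} and belongs to $\tilde{\mathcal{D}}$, so only asymptotic compactness remains. For this I would use the tail estimate of Lemma \ref{suijilianxuweibuguji}: given a tempered $\mathscr{D}$ and a pullback sequence $\Phi^a(t_n,\theta_{-t_n}\omega,\Phi_{0,n})$ with $t_n\to\infty$, I split each iterate into its restriction to $|j|<N(\delta,\omega)$ and its tail $|j|\geq N(\delta,\omega)$. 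The finite block is bounded in $\CC^{2N-1}$ by the absorption radius $R(a,\omega)$, hence precompact by Bolzano--Weierstrass, while the tail is below $\delta$ uniformly in $n$; a diagonal argument over $\delta\to 0$ then extracts a subsequence converging in $\ell^2$. This gives asymptotic compactness, and the abstract theorem yields the unique random $\tilde{\mathcal{D}}$-attractor $\mathscr{A}^a(\omega)=\bigcap_{s\geq 0}\overline{\bigcup_{t\geq s}\Gamma^a(t,\theta_{-t}\omega)\mathcal{G}^a(\theta_{-t}\omega)}$.

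For the upper semi-convergence \eqref{5.21} I would argue by contradiction, mirroring the structure used in Theorem \ref{dingli4.2}. Suppose the limit fails: there exist $a_n\to 0$, points $x_n\in\mathscr{A}^{a_n}(\omega)$ and $\epsilon_0>0$ with $\mathrm{dist}_X(x_n,\mathscr{A})\geq \epsilon_0$. Two ingredients drive the contradiction. First, a uniform-in-$a$ bound on the attractors: since $z(\theta_s\omega)$ obeys the sublinear growth \eqref{OUxingzhi}, the radius $R(a,\omega)$ in \eqref{suijixishoubanjin} converges as $a\to 0$ to the deterministic absorption level, so there is $a_0>0$ and a fixed $\bar r$ with $\mathscr{A}^{a}(\omega)\subset B_{\bar r}$ for all $a\in(0,a_0]$; combined with the tail control of Lemma \ref{suijilianxuweibuguji}, uniform for small $a$, the family $\{x_n\}$ is precompact in $\ell^2$, so along a subsequence $x_n\to x$. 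Second, a pathwise convergence of solutions: I would show that on each compact interval $[0,T]$ the solution $U^{a}(\cdot,\omega,U_0)$ of \eqref{suijigl2} converges, as $a\to 0$, to the solution $u(\cdot,u_0)$ of the deterministic system \eqref{gl} whenever $U_0\to u_0$.

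To establish this solution convergence I would subtract \eqref{gl} from \eqref{suijigl2}, take the real part of the inner product of the difference with $U^a-u$, and estimate term by term inside the invariant ball $B_{r^*}$. The linear and damping terms are controlled by $\|\Lambda\|\leq 4$ and the coercivity $\gamma-4\lambda>0$; the nonlinearity is handled by the local Lipschitz estimate \eqref{lianxu} on $B_{r^*}$; and the discrepancies between the random coefficients $\mathrm{e}^{-az(\theta_t\omega)}$, $\mathrm{e}^{paz(\theta_t\omega)}$, $az(\theta_t\omega)$ and their deterministic limits $1,1,0$ are, for fixed $\omega$, uniformly small on $[0,T]$ as $a\to 0$ because $z(\theta_\cdot\omega)$ is path-continuous. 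A Gronwall argument then yields $\sup_{t\in[0,T]}\|U^a(t)-u(t)\|\to 0$. Using the invariance of $\mathscr{A}^{a_n}(\omega)$, each $x_n$ lies on a complete bounded orbit; passing to the limit with the established convergence shows that $x$ lies on a complete bounded orbit of \eqref{gl} contained in $B_{r^*}$, hence $x\in\mathscr{A}$ by Theorem \ref{dingli3.1}. This contradicts $\mathrm{dist}_X(x_n,\mathscr{A})\geq\epsilon_0$ and proves \eqref{5.21}.

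The step I expect to be the main obstacle is the pathwise convergence of solutions together with the uniform-in-$a$ control: one must simultaneously pass $a\to 0$ in the random coefficients, keep the Gronwall constant independent of $a$ (which relies on $R(a,\omega)$ remaining bounded as $a\to 0$), and transfer the finite-time convergence to the complete orbits characterizing the attractors. Additional care is needed because the convergence is only pathwise for fixed $\omega$, so the exponential factors, while tending to their limits, must be controlled uniformly on the growing pullback time windows entering the invariance and limit arguments.
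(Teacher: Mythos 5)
Your proposal is correct, and it assembles exactly the same ingredients as the paper: existence follows from the $\tilde{\mathcal{D}}$-absorbing set of Lemma \ref{quanjusuijixishou} plus the tail estimate of Lemma \ref{suijilianxuweibuguji} (finite block bounded, tail uniformly small, hence pullback asymptotic compactness), and the upper semi-convergence rests on (a) boundedness/convergence of the radius $R(a,\omega)$ as $a\to 0$, (b) tail estimates uniform in $a\in(0,a^*]$ giving precompactness of the attractors, and (c) pathwise Gronwall-type convergence of $U^a$ to the deterministic solution on compact intervals, which is precisely the content of the paper's Lemma \ref{yinli5.4}. The one genuine difference is the final step: the paper stops after verifying these three conditions and delegates the conclusion to the abstract upper-semicontinuity result cited as \cite{35}, whereas you reconstruct that abstract result's proof directly --- contradiction, extraction of a convergent subsequence $x_n\to x$ from the uniform compactness, and identification of $x$ as lying on a complete bounded orbit of the deterministic system, hence in $\mathscr{A}$. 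Your self-contained route costs more work (in particular the point you flag yourself: the complete-orbit limit requires the uniform bounds, precompactness, and solution convergence at the shifted fibers $\theta_{-s}\omega$ over arbitrarily long pullback windows, handled by a diagonal argument over $s$), but it buys independence from the abstract theorem --- which is not a small advantage here, since the paper's reference \cite{35} is evidently misattributed (it points to a 1985 liquid-crystal physics paper), so your version is the more verifiable of the two.
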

\begin{proof}[proof]
By Lemma \ref{suijilianxuweibuguji}, the equation \eqref{suijigl} possesses a unique global random $\tilde{\mathscr{D}}$ attractor given by
\begin{equation*}
	\mathscr{A}(\omega) = \bigcap_{\tau \geq T_k} \overline{\bigcup_{t \geq \tau} \Phi(t, \theta_{-t}\omega, \mathscr{G}(\theta_{-t}\omega))} \in X, \ \tau \in \mathbb{R}, \ \omega \in \Omega.
\end{equation*}
Since the radius $R(a, \omega)$ in \eqref{suijixishoubanjin} is bounded as $a \to 0$, and the tail estimate in Lemma \ref{suijilianxuweibuguji} is uniform for all $a \in (0, a^*]$, uniform asymptotic compactness can be ensured. Moreover, by employing a method similar to that in Lemma \ref{yinli5.4}, which will be presented later, we can establish the convergence of the random system $\Gamma^{a}(\cdot, \omega)$ to the semigroup $\mathscr{H}(\cdot)$ as $a \to 0$. In summary, the upper semi-convergence \eqref{5.21} can be derived by applying the abstract result from \cite{35}.
\end{proof}
At the last of this subsection, we study the semi-continuity of random attractor as following:
\begin{theorem}\label{suijishangbanlianxu}
	The random attractor is upper semi-continuous, that is, for selected $t$, we have
	\begin{equation*}
		\lim_{a\to a_{0}} d_{\ell^{2}}(\mathcal{A}^{a}(\omega),\mathcal{A}^{a_{0}}(\omega)) =0 , \ \forall \omega\in\Omega\ \text{and}\ a_{0} \neq 0.
	\end{equation*}
\end{theorem}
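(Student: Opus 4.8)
The plan is to deduce the statement from the abstract upper semi-continuity criterion for random attractors already invoked for Theorem \ref{dingli5.1} via \cite{35}. That criterion requires three ingredients at a fixed $a_{0}\neq 0$: (a) convergence of the random dynamical systems, $\Gamma^{a}(t,\omega)x_{a}\to\Gamma^{a_{0}}(t,\omega)x_{0}$ whenever $a\to a_{0}$ and $x_{a}\to x_{0}$ in $X=\ell^{2}$, for each fixed $t>0$ and $\mathbb{P}$-a.s. $\omega$; (b) a common random $\tilde{\mathcal{D}}$-absorbing set that is uniform for $a$ in a neighbourhood of $a_{0}$; and (c) uniform asymptotic compactness of the family $\{\Gamma^{a}\}$ near $a_{0}$. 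Once (a)--(c) are verified, the abstract theorem yields $\lim_{a\to a_{0}}d_{\ell^{2}}(\mathcal{A}^{a}(\omega),\mathcal{A}^{a_{0}}(\omega))=0$ directly.

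First I would prove (a). Fix $\omega$, $t>0$ and initial data, let $U^{a},U^{a_{0}}$ solve \eqref{suijigl2}, and set $V=U^{a}-U^{a_{0}}$. Taking the real part of the inner product of the difference equation with $V$, the dissipative linear part contributes $(8\lambda-2\gamma)\|V\|^{2}$ as in Lemma \ref{yinli2.2}, while the nonlinear difference is decomposed as
\[
|U^{a}|^{p}U^{a}{\rm{e}}^{paz(\theta_{t}\omega)}-|U^{a_{0}}|^{p}U^{a_{0}}{\rm{e}}^{pa_{0}z(\theta_{t}\omega)}
=\bigl(|U^{a}|^{p}U^{a}-|U^{a_{0}}|^{p}U^{a_{0}}\bigr){\rm{e}}^{paz(\theta_{t}\omega)}
+|U^{a_{0}}|^{p}U^{a_{0}}\bigl({\rm{e}}^{paz(\theta_{t}\omega)}-{\rm{e}}^{pa_{0}z(\theta_{t}\omega)}\bigr).
\]
Since both solutions remain in the absorbing ball, the first summand is controlled by the uniform local Lipschitz bound \eqref{lianxu}, giving a term $\le C\|V\|^{2}$; the second summand, together with the mismatches ${\rm{e}}^{-az(\theta_{t}\omega)}g-{\rm{e}}^{-a_{0}z(\theta_{t}\omega)}g$ and $aU^{a}z(\theta_{t}\omega)-a_{0}U^{a_{0}}z(\theta_{t}\omega)$, is $O(|a-a_{0}|)$ on $[0,t]$ because $s\mapsto z(\theta_{s}\omega)$ is pathwise continuous and hence bounded there. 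Gronwall's inequality then gives $\|V(t)\|\le C(t,\omega)\,|a-a_{0}|\to 0$; running the same estimate with $x_{a}\to x_{0}$ supplies the joint continuity required by the abstract result. This is essentially the argument of the forthcoming Lemma \ref{yinli5.4}, with $a_{0}$ in place of $0$.

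For (b) I would observe that the absorbing radius $R(a,\omega)$ in \eqref{suijixishoubanjin} depends continuously on $a$ through the exponentials under the integral sign. Because $a_{0}\neq 0$, one may restrict to a compact neighbourhood $[a_{0}-\delta,a_{0}+\delta]\subset(0,\infty)$, on which the defining integral converges uniformly by the Ornstein--Uhlenbeck decay \eqref{OUxingzhi}, exactly as in Lemma \ref{quanjusuijixishou}. Hence $\bar R(\omega):=\sup_{|a-a_{0}|\le\delta}R(a,\omega)<\infty$, and $\{\Phi:\|\Phi\|_{X}^{2}\le\bar R(\omega)\}$ is a common $\tilde{\mathcal{D}}$-absorbing set containing $\bigcup_{|a-a_{0}|\le\delta}\mathcal{A}^{a}(\omega)$. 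For (c), the tail estimate of Lemma \ref{suijilianxuweibuguji} carries over with constants uniform in $a\in[a_{0}-\delta,a_{0}+\delta]$, since the factors ${\rm{e}}^{paz(\theta_{t}\omega)}$, ${\rm{e}}^{-az(\theta_{t}\omega)}$ and $az(\theta_{t}\omega)$ entering $J_{1},J_{2},J_{3}$ are uniformly bounded over this compact parameter set; thus the truncation index $N(\delta,\omega)$ can be chosen independently of $a$, which together with (b) yields the uniform asymptotic compactness of the family.

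The main obstacle is step (a), and specifically the uniform control of the exponential mismatch $|U^{a_{0}}|^{p}U^{a_{0}}({\rm{e}}^{paz(\theta_{s}\omega)}-{\rm{e}}^{pa_{0}z(\theta_{s}\omega)})$: one must ensure that the Gronwall constant remains uniform in $a$ over the neighbourhood of $a_{0}$ and that ${\rm{e}}^{paz(\theta_{s}\omega)}$ stays bounded on $[0,t]$ uniformly in $a$. This is precisely where the restriction $a_{0}\neq 0$ and the pathwise continuity of the Ornstein--Uhlenbeck process are used. With (a)--(c) established, the abstract upper semi-continuity theorem of \cite{35} completes the proof.
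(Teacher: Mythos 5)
Your proposal is correct, and its core step---pathwise convergence of the cocycles $\Gamma^{a}\to\Gamma^{a_{0}}$ via an energy estimate on the difference equation plus Gronwall---is exactly the reduction the paper makes. The differences are worth recording. First, the nonlinear term: the paper handles it by substituting the identity $U^{a_{0}}={\rm{e}}^{(a_{n}-a_{0})z(\theta_{t}\omega)}U^{a_{n}}$ (carried over from the method of Lemma \ref{yinli5.4}, where $u^{a}=(1-{\rm{e}}^{az(\theta_{t}\omega)})U^{a}$), so that the whole difference collapses into one solution times exponential factors, and then invokes the uniform convergence \eqref{shangbanlianxu4}. That identity is legitimate only when both $U^{a_{n}}$ and $U^{a_{0}}$ come from the \emph{same} underlying process via the Ornstein--Uhlenbeck transform, which is not the case here because the stochastic equation \eqref{suijigl} itself depends on the noise intensity; your add-and-subtract decomposition---the Lipschitz bound \eqref{lianxu} on $(|U^{a}|^{p}U^{a}-|U^{a_{0}}|^{p}U^{a_{0}}){\rm{e}}^{paz(\theta_{t}\omega)}$ together with an $O(|a-a_{0}|)$ bound on $|U^{a_{0}}|^{p}U^{a_{0}}({\rm{e}}^{paz(\theta_{t}\omega)}-{\rm{e}}^{pa_{0}z(\theta_{t}\omega)})$---sidesteps this entirely and is the more robust argument. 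Second, completeness: the paper's proof consists solely of the solution-convergence step and asserts this ``is sufficient,'' whereas you also verify the remaining hypotheses of the abstract criterion of \cite{35}, namely a common $\tilde{\mathcal{D}}$-absorbing set and uniform asymptotic compactness for $a$ in a compact neighbourhood of $a_{0}$; these are genuinely needed for upper semi-continuity of attractors and are left implicit in the paper, so your treatment buys a fully self-contained argument at the cost of length.

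One small misattribution: you claim the restriction $a_{0}\neq 0$ is ``precisely where'' the uniform boundedness of ${\rm{e}}^{paz(\theta_{s}\omega)}$ on $[0,t]$ is used. In fact pathwise continuity of $s\mapsto z(\theta_{s}\omega)$ gives that bound for $a$ in any compact parameter set, including sets touching $a=0$; the hypothesis $a_{0}\neq 0$ only serves to distinguish this continuity statement between random attractors from the singular limit $a\to 0$ toward the deterministic attractor, which is the separate content of Theorem \ref{dingli5.1}. This does not affect the validity of your proof.
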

\begin{proof}[Proof]
As for this Theorem, proving the convergence as below is sufficient:
\begin{equation}\label{shangbanlianxu1}
	\lim_{n\to \infty} \| \Phi^{a_{n}}(t,\theta_{-t},\omega,\Phi^{a_{n}}_{0}) - \Phi^{a_{0}}(t,\theta_{-t},\omega,\Phi^{a_{0}}_{0})  \| = 0,
\end{equation}
where $a_{n}\to a_{0}\neq0$, and $\Phi_{0}^{a_{n}}\to\Phi_{0}^{a_{0}}$. Let $u^{n}(t,\omega) = U^{a_{n}}(t,\omega,u_{0}) - U^{a_{0}}(t,\omega,u_{0})$, by submiting into \eqref{suijigl2}, we have
\begin{equation}\label{shangbanlianxu2}
\begin{split}
	\frac{du^n}{dt} =& (\lambda+i\mu)\Lambda u^{n}-(\gamma-i\beta)u^{n}-(k+i\nu)\left[ |U^{a_{n}}|^{p}U^{a_{n}}{\rm{e}}^{pa_{n}z(\theta_{t}\omega)}-|U^{a_{0}}|^{p}U^{a_{0}}{\rm{e}}^{pa_{0}z(\theta_{t}\omega)} \right] \\
	&+\quad ({\rm{e}}^{-a_{n}z(\theta_{t}\omega)}-{\rm{e}}^{-a_{0}z(\theta_{t}\omega)})g + a_{n}z(\theta_{t}\omega)U^{a_{n}} - a_{0}z(\theta_{t}\omega)U^{a_{0}}.
\end{split}
\end{equation}
Obviously, we konw that $U^{a_{0}} = {\rm{e}}^{(a_{n}-a_{0})z(\theta_{t},\omega)}$, then we take the real part of the inner product of the \eqref{shangbanlianxu2} with $u^{n}$, for convinence we use $z$ instead of $z(\theta_{t}\omega)$ in the rest of this proof.
\begin{equation}\label{shangbanlianxu3}
\begin{split}
	\frac{1}{2}\frac{d}{dt}\|u^{n}\|^{2} &= \lambda\|D u^{n}\|^{2} - \gamma \|u^{n}\|^{2} - \operatorname{Re}\left\{(k+i\nu)\left[ |U^{a_{n}}|^{p}U^{a_{n}} \left({\rm{e}}^{a_{n}pz} -{\rm{e}}^{a_{n}(p+1)z-a_{0}z}  \right)  \right]  \right\} \\
	&\quad+  \operatorname{Re}\left\{(1-{\rm{e}}^{(a_{n}-a_{0})z}) ({\rm{e}}^{-a_{n}z}-{\rm{e}}^{-a_{0}z})(g,U^{a_{n}})  \right\} + \operatorname{Re}\left\{ \left( z(a_{n} - a_{0}{\rm{e}}^{a_{n}-a_{0}z})U^{a_{n}}, (1-{\rm{e}}^{(a_{n}-a_{0})z})U^{a_{n}}    \right)  \right\}.
\end{split}
\end{equation}
form \cite{Liu-2024a}, we can obtain
\begin{equation}\label{shangbanlianxu4}
\begin{split}
	&\sup_{t\in[0,T]} |{\rm{e}}^{pa_{n}z} - {\rm{e}}^{pa_{0}z}| \to 0,\ \text{as}\ n\to\infty,\\
	&\sup_{t\in[0,T]} |{\rm{e}}^{a_{n}z} - {\rm{e}}^{a_{0}z}| \to 0,\ \text{as}\ n\to\infty.
\end{split}
\end{equation}
Then we use a method which will be used in Lemma \ref{yinli5.4}, we can get the \eqref{shangbanlianxu1} and complete the proof.
\end{proof}
\subsection{The semi-continuity of truncated random attractor}
Using the notations for the two matrices $\Lambda_{m}$ and $D_{m}^{+}$ denoted in previous section, we consider the truncation
of the random lattice system \eqref{suijigl2} in the space $X_m$ as follows:
\begin{equation}\label{suijijieduangl}
	\frac{dU^{a,m}}{dt} = (\lambda+i\mu)\Lambda_{m} U^{a,m} - (\gamma+i\beta)U^{a,m} - (k+i\nu)(|U^{a,m}|^{p})U^{a,m} {\rm{e}}^{paz(\theta_{t}\omega)} + {\rm{e}}^{-az(\theta_{t}\omega)}g^{m} + aU^{a,m}z(\theta_{t}\omega),
\end{equation}
with the initial condition $U^{a,m}(0) = U_{0}^{a}\in X_{m}$. For this truncated random system \eqref{suijijieduangl}, there exists a unique solution. This solution generates a continuous random dynamical system $\Xi_{m}^{a}:\RR^{+}\times \Omega \times X_{m}\to X_{m}$ corresponding to $a\in(0,a^{*}],\ m\in\NN$, which is denoted as 
\begin{equation*}
	\Xi_{m}^{a}(t,\omega)U_{0}^{a} = \Phi^{a,m}(t,\omega,U_{0}^{a}),\ \forall t\geq0,\omega\in\Omega,U_{0}^{a}\in X_{m}.
\end{equation*}
Let $\widetilde{\mathscr{D}}_m$ be the restriction of $\widetilde{\mathscr{D}}$ on $X_m$, that is, $\mathscr{D} \in \widetilde{\mathscr{D}}_m$ if and only if
\[
\lim_{t \to +\infty} e^{-\rho t} \sup\bigl\{\|\Phi^\varepsilon\|_{X_m} : \Phi^\varepsilon \in \mathscr{D}(\theta_{-t} \omega)\bigr\} = 0, \ \omega \in \Omega, \ \text{for all } \rho > 0.
\]

To derive the upper semi-convergence between the truncated semigroup and the global attractor, we proceed to demonstrate the following $\widetilde{\mathscr{D}}_m$-absorbing set and tail estimates for the truncated random system. These correspond to Lemmas \ref{quanjusuijixishou} and \ref{suijilianxuweibuguji}.
\begin{lemma}\label{yinli5.3}
	Let $a \in (0, a^*]$, and $m \in \mathbb{N}$. Then the random dynamical system $\Xi_m^\varepsilon$ has a random $\widetilde{\mathscr{D}}_m$-absorbing set given by
	\begin{equation}\label{suijijieduanxishouji}
		\mathscr{G}_m^a(\omega) = \left\{ \Phi_m^a \in X_m : \| \Phi \|_{X_m}^2 \leq R(\varepsilon, \omega) \right\}, \ \omega \in \Omega,
	\end{equation}
	where $R(\varepsilon, \omega)$ is given by \eqref{suijixishoubanjin} and is independent of $m$. Moreover, for each $\delta > 0$, $\mathscr{D}_m \in \widetilde{\mathscr{D}}_m$ and $\omega \in \Omega$, there exist $T(\delta) > 0$ and $I(\delta) \in \mathbb{N}$ such that for all $t \geq T$, $m > I(\delta)$ and $\Phi_0^m \in \mathscr{D}_m(\theta_{-t}\omega)$,
	\begin{equation*}
		\left| \Phi^{a, m}(t, \theta_{-t}\omega, \Phi_0^m) \right| \leq 	\sum_{1 \leq |j| \leq m} \left| u_j^{\varepsilon, m} \right|^2 \leq \delta.
	\end{equation*}
\end{lemma}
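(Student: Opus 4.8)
The plan is to prove the two assertions by replaying, in the finite-dimensional space $X_m$, the arguments of Lemma \ref{quanjusuijixishou} and Lemma \ref{suijilianxuweibuguji}, taking advantage of the fact that the truncated operators $\Lambda_m$ and $D_m^+$ obey exactly the same structural estimates as their infinite-dimensional counterparts. The only genuinely new point is to keep track that the resulting constants do not depend on the truncation dimension $m$.

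For the absorbing set, I would take the inner product of \eqref{suijijieduangl} with $U^{a,m}$ and pass to the real part. The decisive estimate is
\begin{equation*}
\operatorname{Re}\left[(\lambda+i\mu)(\Lambda_m U^{a,m}, U^{a,m})\right] \leq \operatorname{Re}\left[(\lambda+i\mu)\|D_m^{+}U^{a,m}\|^2\right] \leq 4\lambda\|U^{a,m}\|^2,
\end{equation*}
which was already verified in the proof of Theorem \ref{dingli4.3}. Treating the forcing term $e^{-az(\theta_t\omega)}(g^m, U^{a,m})$ and the dissipative nonlinearity exactly as in Lemma \ref{quanjusuijixishou} yields the differential inequality
\begin{equation*}
\frac{d}{dt}\|U^{a,m}\|^2 \leq 2(4\lambda-\gamma+az(\theta_t\omega))\|U^{a,m}\|^2 + 2c_3\|g^m\|_{\frac{p+2}{p+1}}^{\frac{p+2}{p+1}}e^{-az(\theta_t\omega)},
\end{equation*}
with $c_3$ as in the proof of Lemma \ref{quanjusuijixishou}. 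Since $\|g^m\|_{\frac{p+2}{p+1}} \leq \|g\|_{\frac{p+2}{p+1}}$ for every $m$, applying Gronwall's inequality, substituting $\omega$ by $\theta_{-\tau}\omega$, and invoking the limits \eqref{OUxingzhi} exactly reproduces the radius $R(a,\omega)$ of \eqref{suijixishoubanjin}; because the right-hand side is dominated uniformly in $m$, the bound $R(a,\omega)$ is $m$-independent and $\mathscr{G}_m^a(\omega)$ is $\widetilde{\mathscr{D}}_m$-absorbing.

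For the tail estimate I would take the inner product of \eqref{suijijieduangl} with $\xi_l^m U^{a,m}$, where $\xi_l^m$ is the finite-dimensional cut-off of Lemma \ref{yinli4.1}, and pass to the real part. The boundary contributions produced by $\Lambda_m$ against $\xi_l^m$ have already been handled in the derivation of \eqref{jieduandiyixiangxin}--\eqref{jieduandierxiang}, which furnish
\begin{equation*}
\operatorname{Re}\left[\lambda(\Lambda_m U^{a,m}, \xi_l^m U^{a,m})\right] \leq 4\lambda\sum_{|j|\leq m}\xi_{l,j}|U_j^{a,m}|^2 + \lambda\frac{c_2}{l}(r^*)^2
\end{equation*}
together with the analogous $O(c_2/l)$ bound for the $i\mu$ term. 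The damping, nonlinear, random forcing, and $az(\theta_t\omega)$ terms are then estimated verbatim as in Lemma \ref{suijilianxuweibuguji}. Applying Gronwall on $[T_k,t]$, substituting $\omega$ by $\theta_{-t}\omega$, and splitting the outcome into $J_1+J_2+J_3$ as in Lemma \ref{suijilianxuweibuguji}, I would bound $J_1$ by temperedness and \eqref{huanzengji}, $J_2$ by the $m$-uniform absorption established in the first part (which controls $\|U^{a,m}(\tau)\|^2$ by $R(a,\omega)$), and $J_3$ by $g\in\ell^2$, choosing $l\geq\max\{N_1,N_2\}$ so that each piece is at most $\delta/3$. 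Setting $I(\delta)=2l$ then delivers the stated tail bound uniformly for all $m>I(\delta)$.

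The main obstacle will be verifying that every constant entering the tail estimate is independent of $m$, so that a single threshold $I(\delta)$ works simultaneously across all truncation dimensions. This rests on two uniformity facts: the absorbing radius $R(a,\omega)$ from the first part is $m$-independent, which is what makes the factor $\|U^{a,m}\|^2$ inside $J_2$ uniformly bounded; and the cut-off estimates for $\Lambda_m$ yield the same $c_2/l$ decay irrespective of $m$, as already quantified in Lemma \ref{yinli4.1}. Once these two uniformities are secured, the remainder is a routine transcription of Lemmas \ref{quanjusuijixishou} and \ref{suijilianxuweibuguji}.
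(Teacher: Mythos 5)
Your proposal is correct and follows exactly the route the paper intends: the paper states Lemma \ref{yinli5.3} without proof, presenting it as the finite-dimensional counterpart of Lemmas \ref{quanjusuijixishou} and \ref{suijilianxuweibuguji}, and your transcription of those arguments---using the truncated-operator estimates \eqref{jieduandiyixiangxin}--\eqref{jieduandierxiang} together with $\|g^m\|_{\frac{p+2}{p+1}} \leq \|g\|_{\frac{p+2}{p+1}}$ to secure the $m$-independence of $R(a,\omega)$---is precisely the missing argument. Your explicit treatment of the two uniformity points (the $m$-independent absorbing radius feeding into $J_2$, and the $m$-independent $c_2/l$ cutoff decay) is, if anything, more careful than the paper, which only asserts these uniformities in passing within the proof of Theorem \ref{dingli5.2}.
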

\begin{theorem}\label{dingli5.2}
Assume $a \in (0, a^*]$. For each $m \in \mathbb{N}$, the random dynamical system $\Xi_m^a$ has a random $\widetilde{\mathscr{D}}$-attractor $\mathscr{A}_m^\varepsilon(\omega) \subset X_m$ with the following convergence:
\begin{equation}\label{suijijieduandaoquanju}
	\lim_{m \to \infty} \mathrm{dist}_X\left( \mathscr{A}_m^a(\omega), \mathscr{A}^a(\omega) \right) = 0, \quad \forall \omega \in \Omega,
\end{equation}
where $\mathscr{A}_m^a(\omega)$ is naturally embedded in $X$, and $\mathscr{A}^a(\omega)$ is the random attractor given in Theorem \ref{dingli5.1}.
\end{theorem}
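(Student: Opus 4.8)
The plan is to split the argument into the existence of the truncated random attractor $\mathscr{A}_m^a(\omega)$ and its upper semi-convergence to $\mathscr{A}^a(\omega)$. For existence, I would invoke Lemma \ref{yinli5.3}, which provides a random $\widetilde{\mathscr{D}}_m$-absorbing set $\mathscr{G}_m^a(\omega)$ whose radius $R(a,\omega)$ is independent of $m$. Since $X_m=\mathbb{C}^{2m+1}$ is finite dimensional, every bounded absorbing set is precompact, so $\Xi_m^a$ is automatically asymptotically compact; the standard random dynamical systems theory already used in Theorem \ref{dingli5.1} then yields a unique random $\widetilde{\mathscr{D}}$-attractor $\mathscr{A}_m^a(\omega)\subset X_m$, which I embed into $X$ through the null-expansion $\widetilde{(\cdot)}$ introduced in Section 4.

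For the convergence \eqref{suijijieduandaoquanju}, I would argue by contradiction, mirroring the proof of Theorem \ref{dingli4.2}. Suppose the claim fails; then there exist $M_0>0$, a subsequence $m_l\to\infty$, and points $z^{m_l}\in\mathscr{A}_{m_l}^a(\omega)$ with $\mathrm{dist}_X(\widetilde{z^{m_l}},\mathscr{A}^a(\omega))\geq M_0$ for all $l$. By invariance of $\mathscr{A}_{m_l}^a(\omega)$ under $\Xi_{m_l}^a$, each $z^{m_l}$ lies on a complete bounded orbit $U^{a,m_l}(\cdot)$ of the truncated random system \eqref{suijijieduangl} that stays inside the absorbing set, so $\|U^{a,m_l}(t)\|^2\leq R(a,\omega)$ uniformly in $t$ and $l$. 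The tail estimate in Lemma \ref{yinli5.3} guarantees that the null-expansions $\widetilde{U^{a,m_l}}(t)$ have uniformly small tails, while the uniform bound controls the finite block, so $\{\widetilde{U^{a,m_l}}(t)\}_l$ is relatively compact in $X$ for each fixed $t$.

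Next I would run a diagonal extraction over a countable dense set of times, upgraded to all $t\in\mathbb{R}$ by the equicontinuity coming from the uniformly bounded vector field, to obtain a further subsequence with $\widetilde{U^{a,m_l}}(t)\to U^*(t)$ in $X$ for every $t$. Since for fixed $\omega$ the coefficients ${\rm e}^{-az(\theta_t\omega)}$, ${\rm e}^{paz(\theta_t\omega)}$ and $az(\theta_t\omega)$ are identical in the truncated and full equations, and $\Lambda_{m_l}$ converges componentwise to $\Lambda$ while $g^{m_l}\to g$, I can pass to the limit term by term in \eqref{suijijieduangl} and conclude that $U^*$ solves the full random system \eqref{suijigl2}. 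The uniform bound passes to the limit, giving $\|U^*(t)\|^2\leq R(a,\omega)$ for all $t$, so $U^*$ is a complete bounded orbit of $\Phi^a$; by the orbit characterization of the random attractor, $U^*(0)\in\mathscr{A}^a(\omega)$. Then $\widetilde{z^{m_l}}=\widetilde{U^{a,m_l}}(0)\to U^*(0)\in\mathscr{A}^a(\omega)$ contradicts $\mathrm{dist}_X(\widetilde{z^{m_l}},\mathscr{A}^a(\omega))\geq M_0$, finishing the proof.

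The hard part will be the passage to the limit in the continuous-time random equation: unlike the discrete implicit-Euler setting of Theorem \ref{dingli4.2}, one must control an entire trajectory on $\mathbb{R}$ and justify convergence of the nonlinear term $|U^{a,m_l}|^p U^{a,m_l}$ together with the $\omega$-dependent exponential weights. The uniform-in-$l$ bound $R(a,\omega)$ combined with the Lipschitz estimate \eqref{jieduanlianxu} renders the nonlinearity Lipschitz on the relevant ball, so componentwise convergence of $\widetilde{U^{a,m_l}}$ upgrades to convergence of the whole right-hand side; the only genuinely delicate point is establishing the equicontinuity in $t$ needed for the Arzel\`a--Ascoli/diagonal argument, which I expect to follow directly from the uniform bound $M_{r^*}$ on the vector field.
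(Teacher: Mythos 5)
Your proposal is sound in outline but follows a genuinely different route from the paper. The paper's own proof is a short sketch: it gets existence of $\mathscr{A}_m^a(\omega)$ from the asymptotic compactness implied by the absorbing set and tail estimate (Lemmas \ref{quanjusuijixishou}--\ref{suijilianxuweibuguji}, \ref{yinli5.3}), and then obtains the convergence \eqref{suijijieduandaoquanju} by citing the abstract upper semicontinuity machinery for expanding domains \cite{Li-2019,Li-2019b}, whose hypotheses it verifies in three pieces: convergence of the cocycles $\Gamma_m^a \to \Gamma^a$ (proved ``as in Lemma \ref{yinli5.4}''), uniformity of the absorption radius $R(a,\omega)$ in $m$, and uniform asymptotic compactness for large $m$. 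You instead adapt the self-contained contradiction argument of Theorem \ref{dingli4.2} to the continuous-time random setting: extract complete orbits through the offending points $z^{m_l}$, use the $m$-uniform bound and tails for relative compactness, diagonalize in time, pass to the limit in the equation, and invoke the complete-orbit characterization of the attractor. Your approach has the merit of being explicit where the paper defers to references, and it parallels Section 4 nicely; the paper's approach avoids the trajectory-level limit passage by outsourcing it to an abstract theorem.

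Two points in your argument need repair, though neither is fatal. First, the bound on the orbits is \emph{not} ``$\|U^{a,m_l}(t)\|^2 \leq R(a,\omega)$ uniformly in $t$ and $l$'': fiber invariance gives $U^{a,m_l}(t) \in \mathscr{A}_{m_l}^a(\theta_t\omega) \subseteq \mathscr{G}_{m_l}^a(\theta_t\omega)$, so the correct bound is $R(a,\theta_t\omega)$ --- uniform in $l$ but only locally bounded in $t$ (and tempered as $t\to-\infty$). This still suffices: compactness at each fixed $t$, equicontinuity on compact intervals (where the random coefficients ${\rm e}^{\pm az(\theta_t\omega)}$ are bounded by path continuity), and temperedness of the limit orbit all survive, but the statement as you wrote it is false and the temperedness of the limit is exactly what you need for the last step. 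Second, the ``orbit characterization of the random attractor'' you invoke is the random analogue of Lemma \ref{yinli4.2}, which the paper states only for the deterministic IES; in the random case you must show the limit orbit $U^*$ is a \emph{tempered} complete orbit (which the $R(a,\theta_t\omega)$ bound gives) and then argue, via invariance plus $\widetilde{\mathscr{D}}$-attraction applied to the orbit viewed as a tempered random set, that $U^*(0)\in\mathscr{A}^a(\omega)$. Both gaps are standard to fill, but your write-up should do so explicitly since neither fact is available off the shelf in the paper.
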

\begin{proof}[Proof]
	By Lemma \ref{suijilianxuweibuguji}, the semi-dynamical system $\Phi^{a}$ is asymptotically compact in $\widetilde{\mathscr{D}}_m$, which implies the existence of the random attractor. Similar to the case of stochastic PDE on expanding domains as given in \cite{Li-2019,Li-2019b}, we can obtain the convergence in \eqref{suijijieduandaoquanju}. In addition, using a method similar to that in Lemma \ref{yinli5.4} later, we can prove the convergence $\Gamma_m^a \to \Gamma^a$ as $m \to \infty$. Moreover, we can verify that the absorption and tail estimate in Lemma \ref{yinli5.3} are uniform for sufficiently large $m$, and thus we can prove that the asymptotic compactness of $\Gamma_m^a(t, \omega)$ is also uniform if $m$ is sufficiently large.
\end{proof}

The deterministic system \eqref{gl1} on the finite dimensional space $X_{m}$ can be written as 
\begin{equation}\label{youxiangl}
	\frac{du^{m}}{dt} = (\lambda+i\mu)\Lambda_{m} u^{m} - (\gamma-i\beta) u^{m} - (k+i\nu) |u^{m}|^{p} u^{m} + g^{m}.
\end{equation}
For each $m \in \mathbb{N}$, there exists a unique solution of \eqref{youxiangl}, which generates a semigroup $\mathscr{H}_m : \mathbb{R}^+ \times X_m \to X_m$ defined by
\begin{equation}\label{jieduanbanqun}
	\mathscr{H}_m(t)u_0^m = u^m(t, u_0^m), \ \forall t \geq 0, \ u_0^m \in X_m.
\end{equation}
To prove that the truncated semigroup $\mathscr{H}_m(\cdot)$ has a global attractor $\mathscr{A}_m$ in $X_m$ and that $\mathscr{A}_m$ converges to $\mathscr{A}_m^a(\omega)$ as $a \to 0$. We need to show that, for any $m \in \mathbb{N}$, the semigroup $\mathscr{H}_m(\cdot)$ is the limit of the truncated random dynamical system $\Xi_m^a(\cdot, \omega)$ as $a \to 0$.
\begin{lemma}\label{yinli5.4}
For any $m\in\NN$, when $|\Phi_{0}^{a}-u_{0}^{a}|_{X_{m}}\to0$ as $a\to0$, the solutions of \eqref{suijigl2} and \eqref{youxiangl} satisfy the following:
\begin{equation}\label{suijiyouxianbijin}
	\lim_{a\to0} |\Phi^{a,m}(t,\theta_{-t},\omega,\Phi_{0}^{a})-u^{a}(t,u_{0}^{m})|_{X_{m}} = 0,\ \forall t\textgreater 0,\omega\in\Omega.
\end{equation}
\end{lemma}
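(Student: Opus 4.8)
The plan is to estimate the difference of the two solutions directly through an energy inequality and Gronwall's lemma, exploiting that every $a$-dependent coefficient in \eqref{suijijieduangl} collapses to its deterministic counterpart in \eqref{youxiangl} as $a\to0$. Write $U^{a,m}(t) := \Phi^{a,m}(t,\theta_{-t}\omega,\Phi_0^a)$ for the solution of the random truncated system and $u^m(t)$ for the solution of \eqref{youxiangl}, and set $v := U^{a,m}-u^m \in X_m$, so that $v(0)=\Phi_0^a-u_0^m$ and $\|v(0)\|=|\Phi_0^a-u_0^m|_{X_m}$. Subtracting \eqref{youxiangl} from \eqref{suijijieduangl} and abbreviating $z:=z(\theta_{s-t}\omega)$, the difference $v$ solves
\begin{equation*}
\frac{dv}{dt} = (\lambda+i\mu)\Lambda_m v - (\gamma+i\beta)v - (k+i\nu)\bigl[\,|U^{a,m}|^p U^{a,m}e^{paz} - |u^m|^p u^m\,\bigr] + (e^{-az}-1)g^m + a z\,U^{a,m},
\end{equation*}
and taking the real part of the inner product with $v$ produces $\tfrac12\tfrac{d}{dt}\|v\|^2$ on the left.

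First I would bound the linear terms: by the estimate $\operatorname{Re}[(\lambda+i\mu)(\Lambda_m w,w)]\le 4\lambda\|w\|^2$ used in the proof of Theorem \ref{dingli4.3}, the operator term contributes at most $4\lambda\|v\|^2$, while the damping gives $-\gamma\|v\|^2$. For the nonlinearity I would split
\begin{equation*}
|U^{a,m}|^p U^{a,m}e^{paz} - |u^m|^p u^m = e^{paz}\bigl(|U^{a,m}|^p U^{a,m} - |u^m|^p u^m\bigr) + (e^{paz}-1)\,|u^m|^p u^m,
\end{equation*}
controlling the first summand by the Lipschitz bound $\||U^{a,m}|^p U^{a,m}-|u^m|^p u^m\|\le C_p(r^*)^p\|v\|$ from the proof of Lemma \ref{yinli2.1} (both solutions staying in the absorbing ball), and treating the second summand together with the forcing term $(e^{-az}-1)g^m$ and the extra noise term $a z\,U^{a,m}$ as inhomogeneities. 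Applying Young's inequality to each cross term then yields
\begin{equation*}
\frac{d}{dt}\|v\|^2 \le C\,\|v\|^2 + \varepsilon_a(t), \quad \varepsilon_a(t)\le C'\Bigl(\sup_{s\in[0,t]}|e^{paz}-1|^2 + \sup_{s\in[0,t]}|e^{-az}-1|^2 + \sup_{s\in[0,t]}|a z|^2\Bigr),
\end{equation*}
where $C$ depends only on $\lambda,\mu,\gamma,\beta,k,\nu,p,r^*$, uniformly for small $a$.

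Finally, Gronwall's inequality gives $\|v(t)\|^2 \le e^{Ct}\|v(0)\|^2 + \int_0^t e^{C(t-s)}\varepsilon_a(s)\,ds$. By hypothesis $\|v(0)\|\to0$, and since the path $s\mapsto z(\theta_{s-t}\omega)$ is continuous, hence bounded on the compact interval $[0,t]$, the factors $e^{\pm az}\to1$ and $a z\to0$ uniformly in $s$ as $a\to0$ — this is exactly the uniform convergence recorded in \eqref{shangbanlianxu4} and a consequence of \eqref{OUxingzhi}. Therefore $\varepsilon_a(t)\to0$ uniformly on $[0,t]$, and $\|v(t)\|\to0$, which is \eqref{suijiyouxianbijin}. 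The main obstacle will be guaranteeing that the Gronwall coefficient $C$ stays bounded uniformly as $a\to0$, which requires a uniform-in-$a$ a priori bound on $\|U^{a,m}(t)\|$ over $[0,t]$. I would obtain this from the energy estimate underlying Lemma \ref{yinli5.3}: since the radius $R(a,\omega)$ in \eqref{suijixishoubanjin} is bounded as $a\to0$ (as noted in the proof of Theorem \ref{dingli5.1}), both $U^{a,m}$ and $u^m$ remain in a fixed ball of radius $r^*$ on $[0,t]$, so the Lipschitz constant $C_p(r^*)^p$ and all boundedness constants are independent of $a$; once this uniform bound is secured, the remaining estimates are routine.
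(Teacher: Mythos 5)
Your proposal is correct, but it handles the nonlinear term by a genuinely different route than the paper. You split
\[
|U^{a,m}|^pU^{a,m}e^{paz}-|u^m|^pu^m \;=\; e^{paz}\bigl(|U^{a,m}|^pU^{a,m}-|u^m|^pu^m\bigr)+(e^{paz}-1)\,|u^m|^pu^m,
\]
control the first piece by the Lipschitz bound of Lemma \ref{yinli2.1} (which forces you to secure a uniform-in-$a$ a priori bound on both solutions over $[0,t]$), and close with Gronwall. The paper instead invokes the Ornstein--Uhlenbeck relation $u=e^{az(\theta_t\omega)}U^{a}$, i.e.\ $u^{a}=(1-e^{az(\theta_t\omega)})U^{a}$, which turns the nonlinear difference paired with $\overline{u^a}$ into the single, exactly computable and sign-favorable quantity $k\,e^{paz}(1-e^{az})^{2}|U^{a}|^{p+2}$, and likewise reduces the forcing and noise terms to expressions carrying factors of $(1-e^{\pm az})$; its Gronwall bound then consists entirely of terms that vanish as $a\to0$ by \eqref{zxingzhi} and \eqref{OUxingzhi}. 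Your approach buys robustness: it needs no exact algebraic relation between the two solutions, only the Lipschitz estimate and path continuity of $z$ --- and note that the paper's identity is, strictly speaking, the relation between the transformed and untransformed \emph{random} solutions, not between $U^{a,m}$ and the \emph{deterministic} solution of \eqref{youxiangl}, so your argument is, if anything, on firmer ground. What the paper's computation buys is brevity: no Lipschitz constant and no separate a priori bound are needed, since everything is expressed through $U^{a}$ alone. One small repair to your write-up: the absorbing radius $R(a,\omega)$ of Lemma \ref{yinli5.3} bounds the solution only after the absorption time, so for the uniform bound on the finite interval $[0,t]$ you should instead quote the finite-time energy estimate (the Gronwall step in Lemma \ref{quanjusuijixishou}), which is uniform for small $a$ because $z(\theta_{s-t}\omega)$ is bounded for $s\in[0,t]$ and $\Phi_0^a\to u_0^m$; with that substitution your constants $C$ and $C'$ are indeed independent of $a$ and the proof closes.
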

\begin{proof}[Proof]
For convenience, we use $u,U^{a}$ and $g$ instead of $u^{m},U^{a,m}$ and $g^{m}$. Let $u^{a}(t,\omega) = U^{a}(t,\omega,u_{0}^{m}) - u(t,u_{0}),\ \forall t\geq 0$. Then, by subtracting \eqref{youxiangl} from \eqref{suijigl2}, we obtain
\begin{equation}\label{suijiyouxiancha}
	\frac{du^a}{dt} = (\lambda+i\mu)\Lambda_{m}u^{a}-(\gamma-i\beta)u^{a}-(k+i\nu)\left[ |U^{a}|^{p}U^{a}{\rm{e}}^{paz(\theta_{t}\omega)}-|u|^{p}u \right] + ({\rm{e}}^{-az(\theta_{t}\omega)}-1)g + az(\theta_{t}\omega)U^{a}.
\end{equation}
Then we take the inner product of \eqref{suijiyouxianbijin} with $u^{a}$ in $X_{m}$, by taking the real part, we have
\begin{equation}\label{suijiyouxianbijin2}
\begin{split}
	\frac{1}{2}\frac{d}{dt}\|u^{a}\|^{2} &= \lambda\|D^{+}_{m}u^{a}\|^{2} - \gamma \|u^{a}\|^{2} - \operatorname{Re}\left\{(k+i\nu)\left[ |U^{a}|^{p}U^{a}{\rm{e}}^{paz(\theta_{t}\omega)}-|u|^{p}u \right]\overline{u^a}\right\} \\
	&\quad+ \operatorname{Re}({\rm{e}}^{-az(\theta_{t}\omega)}-1)(g,u^{a}) + \operatorname{Re}(az(\theta_{t}\omega)U^{a},u^{a}).
\end{split}
\end{equation}
Since $u = {\rm{e}}^{az(\theta_{t}\omega)} U^{a}$, that is $u^{a} = (1-{\rm{e}}^{az(\theta_{t}\omega)})U^{a}$, then we have
\begin{equation*}
\begin{split}
	\operatorname{Re}\left\{(k+i\nu)\left[ |U^{a}|^{p}U^{a}{\rm{e}}^{paz(\theta_{t}\omega)}-|u|^{p}u \right]\overline{u^a}\right\} &= \operatorname{Re}\left\{ (k+i\nu)(1-{\rm{e}}^{az(\theta_{t}\omega)})^{2}|U^{a}|^{p}U^{a}{\rm{e}}^{paz(\theta_{t}\omega)}\overline{U^{a}} \right\}\\
	&=k{\rm{e}}^{paz(\theta_{t}\omega)}(1-{\rm{e}}^{az(\theta_{t}\omega)})^{2}|U^{a}|^{p+2},\\
	\operatorname{Re}({\rm{e}}^{-az(\theta_{t}\omega)}-1)(g,u^{a}) &\leq ({\rm{e}}^{-az(\theta_{t}\omega)}-1)(1-{\rm{e}}^{az(\theta_{t}\omega)})|(g,U^{a})|\\
	& \leq k{\rm{e}}^{paz(\theta_{t}\omega)}(1-{\rm{e}}^{az(\theta_{t}\omega)})^{2}|U^{a}|^{p+2} + c_{3}({\rm{e}}^{-az(\theta_{t}\omega)}-1)^{2} \|g\|_{\frac{p+2}{p+1}}^{\frac{p+2}{p+1}},\\
	\operatorname{Re}(az(\theta_{t}\omega)U^{a},u^{a}) &\leq \frac{1}{2}\left( \|U^{a}\|^{2} + \|u^{a}\|^{2}  \right).
\end{split}
\end{equation*}
Let $\tau\in\RR,\omega\in\Omega,T\textgreater0$, and $\zeta\in[0,1)$. Since $\omega$ is continuous on $\RR$, there exists $a_{1}=a_{1}(\omega,T,\zeta)\textgreater0$ such that for each $a\in(0,\min\left\{a_{1},a^{*}\right\})$ and $t\in[\tau,\tau+T]$, we have(see \cite{Su-2025,Wang-2024})
\begin{equation}\label{zxingzhi}
	|{\rm{e}}^{-az(\theta_{t}\omega)}-1| \leq \zeta.
\end{equation}
Thus, we get
\begin{equation}\label{suijiyouxianbijin3}
	\frac{d}{dt}\|u^{a}\|^{2} \leq (8\lambda-2\gamma+az(\theta_{t}\omega))\|u^{a}\|^{2} + 2c_{3}\zeta^{2}\|g\|_{\frac{p+2}{p+1}}^{\frac{p+2}{p+1}} + az(\theta_{t}\omega)\|U^{a}\|^{2}.
\end{equation}
By using Gronwall's inequality to \eqref{suijiyouxianbijin3} on $[0,t]$, with $\omega$ replaced by $\theta_{-t}\omega$, we obtain
\begin{equation}\label{suijiyouxianbijin4}
\begin{split}
	\|u^{a}(t,\theta_{-t}\omega)\|^{2} &\leq {\rm{e}}^{(8\lambda-2\gamma)t+\int_{-t}^{0}az(\theta_{s}\omega)ds}\|u_{0}^{a}\|^{2} + (2c_{3}\zeta^{2}\|g\|_{\frac{p+2}{p+1}}^{\frac{p+2}{p+1}} + az(\theta_{t}\omega)\|U^{a}\|^{2})\int_{-t}^{0} {\rm{e}}^{(8\lambda-2\gamma)s + \int_{s}^{0}az(\theta_{r}\omega)dr} ds\\
	&:= Z_{1} + Z_{2}.
\end{split}
\end{equation}
Since we have supposed $|\Phi_{0}^{a} - u_{0}^{m}|_{X_{m}}\to0$ as $a\to0$ and combined with \eqref{OUxingzhi}, for any $t\textgreater0$, we get
\begin{equation*}
	Z_{1}={\rm{e}}^{(8\lambda-2\gamma)t+\int_{-t}^{0}az(\theta_{s}\omega)ds}\|u_{0}^{a}\|^{2} \leq {\rm{e}}^{(8\lambda-2\gamma)t+at\sup_{s\in[-t,0]}z(\theta_{s}\omega)ds}\|u_{0}^{a}\|^{2}\to0,\ \text{as},\ a \to 0.
\end{equation*}
In fact, from \eqref{zxingzhi} we konw that $\zeta\to0$ as $a\to0$. On the other hand, according to Lemma \ref{quanjusuijixishou}, we know that $U^{a}$ is bounded, with $g\in\ell^{2}$, it is obvious that $Z_{2}\to0$ as $a\to0$. Thus we get
\begin{equation*}
	\lim_{a\to0} |\Phi^{a,m}(t,\theta_{-t},\omega,\Phi_{0}^{a})-u^{a}(t,u_{0}^{m})|_{X_{m}} = 0,\ \forall t\textgreater 0,\omega\in\Omega.
\end{equation*}
This completes the proof.
\end{proof}
\begin{theorem}\label{dingli5.3}
Assume $a \in (0, a^*]$. The truncated semigroup $\mathscr{H}_m(\cdot)$, generated by \eqref{suijiyouxianbijin}, has a global attractor $\mathscr{A}_m$ in $X_m$, which converges to the attractor $\mathscr{A}_m^a(\omega)$ as $a \to 0$, that is
\begin{equation}\label{suijiyouxianshoulian}
\lim_{a \to 0} \mathrm{dist}_{X_m} \bigl( \mathscr{A}_m^a(\omega), \mathscr{A}_m \bigr) = 0, \ \forall \omega \in \Omega.
\end{equation}
\end{theorem}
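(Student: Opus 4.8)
The plan is to treat the two assertions separately: first the existence of the deterministic global attractor $\mathscr{A}_m$, then the upper semi-convergence $\mathscr{A}_m^a(\omega)\to\mathscr{A}_m$ as $a\to0$. For the existence of $\mathscr{A}_m$, observe that \eqref{youxiangl} is an autonomous ODE system on the finite-dimensional space $X_m=\CC^{2m+1}$. Local well-posedness follows from the Lipschitz bound \eqref{jieduanlianxu}, and global existence from an energy estimate obtained exactly as in Lemma \ref{yinli2.2}: taking the real part of the inner product of \eqref{youxiangl} with $u^m$ and using $(\Lambda_m u^m,u^m)=\|D_m^+u^m\|^2\le 4\|u^m\|^2$ together with $\gamma>4\lambda$ gives $\frac{d}{dt}\|u^m\|^2\le(8\lambda-2\gamma)\|u^m\|^2+2c_1$, so $B_{r^*}^m$ is a bounded absorbing ball for the semigroup $\mathscr{H}_m(\cdot)$ of \eqref{jieduanbanqun}. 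Since closed bounded subsets of $X_m$ are compact, asymptotic compactness is automatic, and the standard theory yields a unique global attractor $\mathscr{A}_m=\bigcap_{T>0}\overline{\bigcup_{t\ge T}\mathscr{H}_m(t)B_{r^*}^m}$.

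For the upper semi-convergence I would first establish a bound on the random attractors that is uniform as $a\to0$. By Lemma \ref{yinli5.3}, $\mathscr{A}_m^a(\omega)$ lies in the ball of radius $\sqrt{R(a,\omega)}$ with $R(a,\omega)$ given in \eqref{suijixishoubanjin}. As $a\to0$ the factors ${\rm e}^{-az(\theta_s\omega)}$ and ${\rm e}^{-\int_0^s 2az(\theta_h\omega)\,dh}$ converge to $1$, so the integrand in \eqref{suijixishoubanjin} tends to ${\rm e}^{(2\gamma-8\lambda)s}$, which is integrable on $(-\infty,0]$ precisely because $\gamma>4\lambda$. A short computation using $c_1=c_3\|g\|_{\frac{p+2}{p+1}}^{\frac{p+2}{p+1}}$ and $2\gamma-8\lambda=2(\gamma-4\lambda)$ shows $R(a,\omega)\to\eta+\frac{c_1}{\gamma-4\lambda}=(r^*)^2$, so for all small $a$ the sets $\mathscr{A}_m^a(\omega)$ (and, by the same reasoning, $\mathscr{A}_m^a(\theta_{-t}\omega)$) are contained in the fixed, $\omega$-independent ball $B_{r^*}^m$. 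Because $X_m$ is finite-dimensional this ball is compact, which supplies the uniform precompactness needed below.

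With these ingredients the conclusion follows by a contradiction argument parallel to Theorem \ref{dingli4.2}. If \eqref{suijiyouxianshoulian} failed there would exist $\delta_0>0$, a sequence $a_n\to0$ and points $x_n\in\mathscr{A}_m^{a_n}(\omega)$ with $\mathrm{dist}_{X_m}(x_n,\mathscr{A}_m)\ge\delta_0$; by the uniform bound, after passing to a subsequence $x_n\to x^*$ with $\mathrm{dist}_{X_m}(x^*,\mathscr{A}_m)\ge\delta_0$. Fixing any $t>0$ and using the invariance $\Xi_m^{a_n}(t,\theta_{-t}\omega)\mathscr{A}_m^{a_n}(\theta_{-t}\omega)=\mathscr{A}_m^{a_n}(\omega)$, write $x_n=\Phi^{a_n,m}(t,\theta_{-t}\omega,y_n)$ with $y_n\in\mathscr{A}_m^{a_n}(\theta_{-t}\omega)\subset B_{r^*}^m$; extracting a further subsequence $y_n\to y^*\in B_{r^*}^m$, Lemma \ref{yinli5.4} gives $x^*=\mathscr{H}_m(t)y^*$. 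As $t>0$ was arbitrary, $x^*\in\mathscr{H}_m(t)B_{r^*}^m$ for every $t$, and letting $t\to\infty$ the attraction property of $\mathscr{A}_m$ forces $\mathrm{dist}_{X_m}(x^*,\mathscr{A}_m)=0$, i.e. $x^*\in\mathscr{A}_m$, a contradiction.

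The main obstacle is the uniform-in-$a$ control, not the compactness, which the finite-dimensionality of $X_m$ provides for free. The real content is twofold: verifying that the absorbing radius $R(a,\omega)$ stays bounded as $a\to0$ (resting on the convergence of the Ornstein-Uhlenbeck exponentials together with $\gamma>4\lambda$), and invoking Lemma \ref{yinli5.4} along the invariance chain to pass to the limit in the nonlinear term $|U^{a,m}|^pU^{a,m}{\rm e}^{paz(\theta_t\omega)}$. A subtle point is that $y^*$ in the final step depends on the chosen $t$; the argument closes not by a single limit but by observing that $x^*$ lies in $\mathscr{H}_m(t)B_{r^*}^m$ simultaneously for all $t>0$ and then applying the attraction of the deterministic attractor.
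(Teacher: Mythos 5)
Your proposal is correct and rests on the same three pillars as the paper's proof: existence of $\mathscr{A}_m$ from the absorbing ball $B_{r^*}^m$ plus finite-dimensional compactness, the dynamics convergence of Lemma \ref{yinli5.4}, and the limit $R(a,\omega)\to(r^*)^2$ obtained from the structure of \eqref{suijixishoubanjin}. Where you differ is in how these are assembled. The paper verifies three conditions --- (i) convergence $\Xi_m^a(t,\theta_{-t}\omega,U_0^a)\to\mathscr{H}_m(t)u_0^m$, (ii) convergence of the absorbing radius by dominated convergence, (iii) precompactness of $\bigcup_{0<a\le a^*}\mathscr{A}_m^a(\omega)$, which it gets by asserting that $R(a,\omega)$ is increasing in $a$ so that the union sits inside $\mathscr{G}_m^{a^*}(\omega)$ --- and then invokes the abstract upper semi-continuity theorem cited from the references. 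You instead unwind that abstract theorem into an explicit contradiction argument, using the invariance $\Xi_m^{a_n}(t,\theta_{-t}\omega)\mathscr{A}_m^{a_n}(\theta_{-t}\omega)=\mathscr{A}_m^{a_n}(\omega)$, Lemma \ref{yinli5.4} along the sequence $a_n\to0$, and the attraction property of $\mathscr{A}_m$, correctly handling the fact that the limit point $y^*$ depends on the chosen $t$. This buys self-containedness, and your route to precompactness (boundedness of $R(a,\omega)$ for small $a$ via its limit) is arguably cleaner than the paper's, since monotonicity of $R(a,\omega)$ in $a$ is asserted without proof and is not obvious given that $z(\theta_s\omega)$ changes sign. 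One small imprecision: the limit $R(a,\omega)\to(r^*)^2$ does not put $\mathscr{A}_m^a(\omega)$ inside $B_{r^*}^m$ itself for small $a$, only inside a ball of radius $r^*+\varepsilon$ with the threshold on $a$ depending on $\omega$ (and, for the sets $\mathscr{A}_m^a(\theta_{-t}\omega)$, on $t$); since $\omega$ and $t$ are fixed before any subsequence is extracted, this does not affect your argument, but the ball should not be called fixed and $\omega$-independent.
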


\begin{proof}[Proof]
Note that since $|g^m| \leq \|g\|,\ g\in\ell^{2}$, and by applying the methodology developed in Lemma \ref{yinli2.1}, we can establish that the truncated semigroup $\mathscr{H}_m(\cdot)$ possesses a bounded absorbing set, which is explicitly given by
\[
\mathscr{G}_m := \left\{ u^m \in X_m : \|u^m\|_{X_m} \leq r^* := \sqrt{\eta + \frac{c_{1}}{\gamma - 4\lambda}} \right\} = \mathscr{H}_{X_m}(0, r^*).
\]
Since $\mathscr{G}_m$ is compact in $X_m$, it follows that $\mathscr{H}_m(\cdot)$ has a global attractor $\mathscr{A}_m$ in $X_m$.

To prove the result in \eqref{suijiyouxianshoulian}, it suffices to verify the three essential conditions of the abstract theorem(see \cite{Bates-2009,Li-2015,35}).

(i) By Lemma \ref{yinli5.4}, as $|U_0^a - u_0^m|_{X_m} \to 0$ when $a \to 0$, for each $t > 0$ and $\omega \in \Omega$, it follows that
\[
\lim_{a \to 0} \left| \Xi_m^a \bigl(t, \theta_{-t}\omega, U_0^a\bigr) - \mathscr{H}_m(t)u_0^m \right|_{X_m} = 0.
\]

(ii) By the Lebesgue dominated convergence theorem, one can deduce that
\[
\begin{aligned}
\lim_{a \to 0} R(a, \omega) &= \eta + 2c_{3} \|g\|_{\frac{p+2}{p+1}}^{\frac{p+2}{p+1}} \lim_{a\to0}\int_{-\infty}^{0} {\rm{e}}^{-az(\theta_{s}\omega)-\int_{0}^{s}2az(\theta_{h}\omega)dh - (8\lambda-2\gamma)s} ds \\
&= \eta + \frac{c_{1}}{\gamma-4\lambda} \leq r^* \,^2, \quad \text{as } t \to \infty.
\end{aligned}
\]

(iii) $\mathscr{A}_m^a(\omega) \subseteq \mathscr{G}_m^a(\omega)$ holds for all $a \in (0, a^*]$, it follows that for all $\omega \in \Omega$, the absorbing radius $R(a, \omega)$ is increasing with respect to $a \in (0, a^*]$, we have
\[
\bigcup_{0 < a \leq a^*} \mathscr{A}_m^a(\omega) \subseteq \bigcup_{0 < a \leq a^*} \mathscr{G}_m^a(\omega) \subseteq \mathscr{G}_m^{a*}(\omega)
\]

Therefore, we conclude that the union $\bigl\{ \mathscr{A}_m^a(\omega) : 0 < a \leq a^* \bigr\}$ forms a precompact set in the finite dimensional space $X_m$. This establishes the third condition and the proof is complete.
\end{proof}
Finally, we give the theorem of the upper semi-continuity of truncated random attractor.
\begin{theorem}\label{jieduanshangbanlianxu}
	The truncated random attractor is upper semi-continuous, for fixed $t$,
	\begin{equation}
		\lim_{n\to \infty} dist_{X_{m}}(\mathcal{A}^{a_{n}}_{m}(\omega),\mathcal{A}^{a_{0}}_{m}) = 0, \ \forall\omega\in\Omega\ \text{and} \ a_{0}\neq0.
	\end{equation} 
\end{theorem}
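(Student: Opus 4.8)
The plan is to follow exactly the strategy of Theorem \ref{suijishangbanlianxu}, but carried out in the finite-dimensional space $X_m=\CC^{2m+1}$, where the argument is in fact simpler: bounded sets are automatically precompact, so no tail estimate is required. By the abstract upper semi-continuity criterion for random attractors cited in \cite{Bates-2009,Li-2015,35}, it suffices to verify two ingredients. First, the union $\bigcup_{0<a\leq a^*}\mathcal{A}_m^a(\omega)$ must be precompact in $X_m$; this follows from Lemma \ref{yinli5.3}, since every $\mathcal{A}_m^a(\omega)$ lies in the common absorbing ball $\mathscr{G}_m^{a^*}(\omega)$ and the radius $R(a,\omega)$ in \eqref{suijixishoubanjin} is bounded for $a$ in a neighborhood of $a_0$. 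Second, and this is the analytic core, one must establish the continuous dependence of the truncated random dynamical system $\Xi_m^a$ on the parameter $a$ at $a_0$, namely
\begin{equation*}
	\lim_{n\to\infty}\bigl\|\Phi^{a_n,m}(t,\theta_{-t}\omega,\Phi_0^{a_n})-\Phi^{a_0,m}(t,\theta_{-t}\omega,\Phi_0^{a_0})\bigr\|_{X_m}=0
\end{equation*}
whenever $a_n\to a_0\neq 0$ and $\Phi_0^{a_n}\to\Phi_0^{a_0}$ in $X_m$.

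To prove this, I would set $v^n(t,\omega)=U^{a_n,m}(t,\omega)-U^{a_0,m}(t,\omega)$ and subtract two copies of the truncated random equation \eqref{suijijieduangl}, obtaining a difference equation entirely analogous to \eqref{shangbanlianxu2} but with $\Lambda_m$ and $D_m^+$ replacing $\Lambda$ and $D^+$. Taking the real part of the inner product with $v^n$ in $X_m$ and using $(\Lambda_m v^n,v^n)=\|D_m^+v^n\|^2\leq 4\|v^n\|^2$ yields
\begin{equation*}
	\frac{1}{2}\frac{d}{dt}\|v^n\|^2\leq(4\lambda-\gamma)\|v^n\|^2+\mathcal{N}+\mathcal{F}+\mathcal{Z},
\end{equation*}
where $\mathcal{N}$, $\mathcal{F}$, $\mathcal{Z}$ are the nonlinear, forcing, and noise-parameter differences respectively. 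The nonlinear difference $|U^{a_n,m}|^pU^{a_n,m}e^{pa_nz}-|U^{a_0,m}|^pU^{a_0,m}e^{pa_0z}$ I would split by adding and subtracting $|U^{a_0,m}|^pU^{a_0,m}e^{pa_nz}$: the first piece is controlled by the finite-dimensional local Lipschitz bound \eqref{jieduanlianxu} together with the boundedness of $|e^{pa_nz}|$, while the second piece is controlled by the uniform convergence $\sup_{t\in[0,T]}|e^{pa_nz}-e^{pa_0z}|\to0$ from \eqref{shangbanlianxu4}. The terms $\mathcal{F}$ and $\mathcal{Z}$ are handled the same way, using $\sup_{t\in[0,T]}|e^{-a_nz}-e^{-a_0z}|\to0$. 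Crucially, every $U^{a,m}$ factor is uniformly bounded in $X_m$ by a constant depending only on $\omega$, thanks to the absorbing set $\mathscr{G}_m^a(\omega)$ of Lemma \ref{yinli5.3}, so each prefactor-difference term is a bounded quantity multiplied by something that vanishes as $n\to\infty$.

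Applying Gronwall's inequality on $[0,t]$ after replacing $\omega$ by $\theta_{-t}\omega$, exactly as in Lemma \ref{yinli5.4}, and invoking the sublinear growth of the Ornstein–Uhlenbeck process from \eqref{OUxingzhi} to dominate the integrating factor $e^{\int a_n z(\theta_s\omega)\,ds}$, the right-hand side collapses to a finite $\omega$-dependent constant times quantities tending to zero (including the initial-data term $\|v^n(0)\|^2\to0$). This delivers the convergence of the random dynamical systems, which combined with the precompactness of the attractor union completes the upper semi-continuity. The main obstacle I anticipate is the bookkeeping of the nonlinear term $\mathcal{N}$: after the add–subtract splitting one must verify that both the Lipschitz-controlled part and the exponential-prefactor part remain integrable in time against the exponential weight, and this rests entirely on the a priori bound on $\|U^{a,m}\|$ being \emph{uniform} in $a$ near $a_0$ — a fact guaranteed by the monotonicity of the absorbing radius $R(a,\omega)$ in $a$ established in the proof of Theorem \ref{dingli5.3}(iii).
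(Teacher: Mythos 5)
Your proposal is correct and follows essentially the same route as the paper, whose entire proof of this theorem is a one-line remark that it follows by the method of Theorem \ref{suijishangbanlianxu} combined with Lemma \ref{yinli5.4}. You have simply filled in the details that remark leaves implicit --- the difference-equation/Gronwall estimate for parameter convergence of $\Xi_m^a$, plus the observation that in $X_m$ the bounded absorbing set makes the attractor union precompact without any tail estimate --- so your write-up is a faithful (and more complete) expansion of the paper's intended argument.
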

\begin{proof}[Proof]
 This Theorem can be proved by the same method used in Theorem \ref{suijishangbanlianxu} and lemma \ref{yinli5.4}.  
\end{proof}

\subsection*{Conflict of interest}
The authors have no conflicts to disclose.

\subsection*{Availability of date and materials}
Not applicable.

\end{document}